\numberwithin{equation}{section}
\newtheorem*{property*}{Property \csname @currentlabel\endcsname}
\newtheorem{theorem}{Theorem}[section]
\newtheorem{proposition}[theorem]{Proposition}
\newtheorem{corollary}[theorem]{Corollary}
\theoremstyle{definition}
\newtheorem{example}[theorem]{Example}
\newtheorem{condition}[theorem]{Condition}
\newtheorem{remark}[theorem]{Remark}
\newtheorem*{ack}{Acknowledgement}
\theoremstyle{remark}
\newenvironment{romenumerate}{\begin{enumerate}
 }{\end{enumerate}}
\newcounter{oldenumi}
{\setcounter{oldenumi}{\value{enumi}}
\begin{romenumerate} \setcounter{enumi}{\value{oldenumi}}}
{\end{romenumerate}}
\newcounter{thmenumerate}
\newcounter{xenumerate}   
\newcommand{\refT}[1]{Theorem~\ref{#1}}
\newcommand{\refR}[1]{Remark~\ref{#1}}
\newcommand{\refS}[1]{Section~\ref{#1}}
\newcommand{\refSS}[1]{Subsection~\ref{#1}}
\newcommand{\refP}[1]{Proposition~\ref{#1}}
\newcommand{\refE}[1]{Example~\ref{#1}}
\newcommand{\refF}[1]{Figure~\ref{#1}}
\newcommand{\refApp}[1]{Appendix~\ref{#1}}
\newcommand{\refCond}[1]{Condition~\ref{#1}}
\newcommand{\refand}[2]{\ref{#1} and~\ref{#2}}
\xdef\klockan{\the\count1.0\the\count255}
\xdef\klockan{\the\count1.\the\count255}\fi
\newcommand{\sumi}{\sum_{i=0}^\infty}
\newcommand{\sumj}{\sum_{j=0}^\infty}
\newcommand{\suml}{\sum_{l=0}^\infty}
\newcommand{\sumlj}{\sum_{l=j}^\infty}
\newcommand{\sumji}{\sum_{j=1}^\infty}
\newcommand{\sumii}{\sum_{i=1}^\infty}
\newcommand{\sumnn}{\sum_{n=-\infty}^\infty}
\newcommand{\sumjj}{\sum_{j=-\infty}^\infty}
\newcommand{\sumin}{\sum_{i=1}^n}
\newcommand\set[1]{\ensuremath{\{#1\}}}
\newcommand\xpar[1]{(#1)}
\newcommand\bigpar[1]{\bigl(#1\bigr)}
\newcommand\Bigpar[1]{\Bigl(#1\Bigr)}
\newcommand\lrpar[1]{\left(#1\right)}
\def\rompar(#1){\textup(#1\textup)}    
\def\xexp(#1){e^{#1}}
\newcommand\ntoo{\ensuremath{{n\to\infty}}}
\newcommand\downto{\searrow}
\newcommand\upto{\nearrow}
\newcommand\ie{i.e.\spacefactor=1000}
\newcommand\eg{e.g.\spacefactor=1000}
\newcommand\viz{viz.\spacefactor=1000}
\newcommand\cf{cf.\spacefactor=1000}
\newcommand{\as}{a.s.\spacefactor=1000}
\newcommand\whp[1]{w.h.p\ifx#1.\relax.\else.\spacefactor=1000#1\fi\relax}
\newcommand\ii{\mathrm{i}}
\newcommand{\tend}{\longrightarrow}
\newcommand\dto{\overset{\mathrm{d}}{\tend}}
\newcommand\pto{\overset{\mathrm{p}}{\tend}}
\newcommand\bbR{\mathbb R}
\newcommand\bbN{\mathbb N}
\newcounter{CC}
\newcounter{cc}
\newcommand\E{\operatorname{\mathbb E{}}}
\renewcommand\P{\operatorname{\mathbb P{}}}
\newcommand\Po{\operatorname{Po}}
\newcommand\Bi{\operatorname{Bi}}
\newcommand\ga{\alpha}
\newcommand\gb{\beta}
\newcommand\gd{\delta}
\newcommand\gf{\varphi}
\newcommand\gl{\lambda}
\newcommand\go{\omega}
\newcommand\gss{\sigma^2}
\newcommand\eps{\varepsilon}
\newcommand\cA{\mathcal A}
\newcommand\cC{\mathcal C}
\newcommand\cE{\mathcal E}
\newcommand\cT{{\mathcal T}}
\newcommand\cX{{\mathcal X}}
\newcommand\ett[1]{\boldsymbol1[#1]} 
\def\[#1]{[\![#1]\!]}
\newcommand\qq{^{1/2}}
\newcommand\qqw{^{-1/2}}
\newcommand\qw{^{-1}}
\newcommand\qww{^{-2}}
\renewcommand{\=}{:=}
\newcommand\intoi{\int_0^1}
\newcommand\intoo{\int_0^\infty}
\newcommand\intoooo{\int_{-\infty}^\infty}
\newcommand\oi{[0,1]}
\newcommand\ooi{(0,1]}
\newcommand\ooo{(0,\infty)}
\newcommand\ddx{\,\textup{d}}
\newcommand{\pgf}{probability generating function}
\newcommand\sumd{\sum_i d_i}
\newcommand\cCi{\cC_1}
\newcommand\opn{o_p(n)}
\newcommand\ds{degree sequence}
\newcommand\pix{\ensuremath{\boldsymbol \pi}}
\newcommand\vpi{_{\pi,\mathsf{v}}}
\newcommand\vpix{_{\pix,\mathsf{v}}}
\newcommand\epi{_{\pi,\mathsf{e}}}
\newcommand\xpi{_{\pi}}
\newcommand\xv{_{\mathsf{v}}}
\newcommand\xe{_{\mathsf{e}}}
\newcommand\xxv[1]{_{#1,\mathsf{v}}}
\newcommand\xxe[1]{_{#1,\mathsf{e}}}
\newcommand\rhov{\rho\xv}
\newcommand\rhoe{\rho\xe}
\newcommand\vpiw{_{\pi',\mathsf{v}}}
\newcommand\dnn{\ensuremath{(d_i\nn)_1^n}}
\newcommand\dd{\ensuremath{\mathbf d}}
\newcommand\gnp{\ensuremath{G(n,p)}}
\newcommand\gnm{\ensuremath{G(n,m)}}
\newcommand\gnln{\ensuremath{G(n,\gl/n)}}
\newcommand\gndq{\ensuremath{G(n,d)}}
\newcommand\gndqx{\ensuremath{G^*(n,d)}}
\newcommand\gnd{\ensuremath{G(n,\dd)}}
\newcommand\gndx{\ensuremath{G^*(n,\dd)}}
\newcommand\nnu{^{(\nu)}}
\newcommand\nn{^{(n)}}
\newcommand\xD{\hat D}
\newcommand\Doo{D}
\newcommand\D{D^*}
\newcommand\nr{n_+}
\newcommand\pixz{p}
\newcommand\cXx{\overline{\mathcal X}}
\newcommand\y{\tilde}
\newcommand\ny{\y n}
\newcommand\nyo{n^\circ}
\newcommand\dy{\y{d}}
\newcommand\ddy{\y{\mathbf{d}}}
\newcommand\py{\y p}
\newcommand\gly{\y\gl}
\newcommand\Dy{\y D}
\newcommand\gdy{g_{\Dy}}
\newcommand\cCy{\y \cC}
\newcommand\ccyi{\cCy_1}
\newcommand\gnxy[1]{\ensuremath{G^*(\ny,#1)}}
\newcommand\gndxy{\gnxy{\ddy}}
\newcommand\ddyv{\ddy\vpi}
\newcommand\ddyvx{\ddy\vpix}
\newcommand\ddye{\ddy\epi}
\newcommand\hy{\y{h}}
\newcommand\z[1]{#1'}
\newcommand\nz{\z n}
\newcommand\ddz{\z{\mathbf{d}}}
\newcommand\gndxz{\ensuremath{G^*(\nz,\ddz)}}
\newcommand\zooo{_0^\infty}
\newcommand\nrset[1]{\#\set{#1}}
\newcommand\piqq{\pi\qq}
\newcommand{\bxxx}[3]{b_{#1#2}(#3)}
\newcommand{\bljpiqq}{\bxxx lj{\piqq}}
\newcommand{\blipiqq}{\bxxx l1{\piqq}}
\newcommand{\bljp}{\bxxx ljp}
\newcommand\gdoo{g_{\Doo}}
\newcommand\gdp{g_{\y\Doo}}
\newcommand\vz{\chi}
\newcommand\ez{\mu}
\newcommand\pic{\pi_{\mathsf c}}
\newcommand\pick{\pic^{(k)}}
\newcommand\picii{\pic^{(2)}}
\newcommand\picn{\pi_{\mathsf c n}}
\newcommand\crk{\ensuremath{\mathrm{Core}_k}}
\newcommand\crkx{\ensuremath{\mathrm{Core}^*_k}}
\newcommand\x{,\allowbreak}
\newcommand\pmax{\widehat p}
\newcommand\pmaxe{\widehat p_0}
\newcommand\Dpmax{D_{\pmax}}
\newcommand\Dypmax{\Dy_{\pmax}}
\newcommand\Dypmaxe{\Dy_{\pmaxe}}
\newcommand\kcore{$k$-core}
\newcommand\xiv{\xi\xv}
\newcommand\hxi{\hat\xi}
\newcommand\tk{\cT_k}
\newcommand\tkx{\overline{\cT}_k}
\newcommand\tkn{\cT_{kn}}
\newcommand\hpsi{\widehat\psi}
\newcommand\lmp{local maximum point}
\newcommand\gmp{global maximum point}
\newcommand\crp{critical point}
\newcommand\tp{\tilde p}
\newcommand\tpi{\tilde \pi}
\newcommand\phtr{phase transition}
\newcommand\gfoo{\gf_\infty}
\newcommand\ppp{\tp_0}
\newcommand\ppl{\tp_1}
\newcommand\iix{\cA}
\newcommand\iio{\iix_0}
\newcommand\iif{\iix_f}
\newcommand\iifl{\iix_f^{(\ell)}}
\newcommand\qc{q_{\mathsf c}}
\newcommand\qcl{q_{\mathsf c}^{(\ell)}}
\newcommand\bgf{\bar\gf}
\newcommand{\maple}{\texttt{Maple}}
\newcommand\REM[1]{{\raggedright\texttt{[#1]}\par\marginal{XXX}}}
\newcommand\urladdrx[1]{{\urladdr{\def~{{\tiny$\sim$}}#1}}}
\begin{document}
\title[percolation in random graphs with given degrees]
{On percolation in random graphs with given vertex degrees}

\date{April 10, 2008} 

\author{Svante Janson}
\address{Department of Mathematics, Uppsala University, PO Box 480,
SE-751~06 Uppsala, Sweden}
\email{svante.janson@math.uu.se}
\urladdrx{http://www.math.uu.se/~svante/}

\keywords{random graph, giant component, k-core, bootstrap percolation}
\subjclass[2000]{60C05; 05C80} 

\begin{abstract} 
We study the random graph obtained by random deletion of vertices or
edges from a random graph with given vertex degrees. A simple trick of
exploding vertices instead of deleting them, enables us to derive
results from known results for random graphs with given vertex
degrees.
This is used to study existence of giant component and existence of
\kcore. As a variation of the latter, we study also bootstrap
percolation in random regular graphs.

We obtain both simple new proofs of known results and new results.
An interesting feature is that for some degree sequences, there are
several or even infinitely many phase transitions for the \kcore.
\end{abstract}

\maketitle

\section{Introduction}\label{Sintro}

One popular and important type of random graph is given by the 
uniformly distributed 
{random graph with a given degree sequence},
defined as follows.
Let $n \in \bbN$ and let $\dd=(d_i)_1^n$ be a sequence of
non-negative integers. 
We let \gnd{} be a 
random graph with degree sequence 
$\dd$, uniformly chosen among all possibilities (tacitly
assuming that there is any such graph at all; in particular, $\sumd$
has to be even).

It is well-known that it is often simpler to study the corresponding
random multigraph \gndx{}
with given degree sequence $\dd=(d_i)_1^n$, defined
for every sequence $\dd$ with $\sum_id_i$ even
by the configuration model (see \eg{} \citet{Bollobas}): 
take a set of $d_i$ \emph{half-edges} for each vertex 
$i$, and combine the half-edges into pairs by a uniformly random
matching of the set of all half-edges
(this pairing  is called a \emph{configuration}); each pair of
half-edges is then joined to form an edge of \gndx.

We consider asymptotics as the numbers of vertices tend to infinity,
and thus we assume throughout the paper that we are given, for each $n$,
a sequence
$\dd\nn=(d_i\nn)_1^{n}$ 
with $\sum_i d_i\nn$ even.
(As usual, we could somewhat more generally assume that we are given a sequence
$n_\nu\to\infty$ and for each $\nu$ a sequence
$\dd\nnu=(d_i\nnu)_1^{n_\nu}$.) 
For notational simplicity we will usually not show the dependency on
$n$ explicitly; 
we thus write $\dd$ and $d_i$,
and similarly for other (deterministic or random) quantities
introduced below.
All unspecified limits and other asymptotic statements are for
$n\to\infty$.
For example, \whp{} (with high probability) means 'with probability
tending to 1 as \ntoo', and
$\pto$ means 'convergence in probability as \ntoo'.
Similarly, we use 
$o_p$ and 
$O_p$ in the standard way, 
always implying \ntoo.
For example, if $X$ is a parameter of the random graph,
$X=o_p(n)$ means that $\P(X>\eps n)\to0$ as \ntoo{} for every $\eps>0$; 
equivalently, 
$X/n\pto0$.

We may obtain \gnd{} by
conditioning the multigraph \gndx{} on being a (simple) graph,
\ie, on not having any
multiple edges or loops.
By \citet{SJ195} (with earlier partial results by many authors),
\begin{equation}
  \label{simple}
\liminf\P\bigpar{\gndx \text{ is simple}}>0
\iff
\sumin d_i^2 = O\lrpar{\sumin d_i}.
\end{equation}
In this case, many results transfer immediately from \gndx{} to \gnd,
for example, every result of the type $\P(\cE_n)\to0$ for some events
$\cE_n$, and thus every result saying that some parameter converges in
probability to some non-random value.
This includes every result in the present paper.

We will in this paper study the random multigraph \gndx; the reader
can think of doing this either for its own sake or as a tool for
studying \gnd. We leave the statement of corollaries for \gnd{}, using
\eqref{simple}, to the reader. 
Moreover, the results for \gnd{} extend to some other random graph
models too, in particular $\gnp$ with $p\sim \gl/n$  and $\gnm$
with $m\sim \gl n/2$ with $\gl>0$, by the standard device of
conditioning on the degree sequence; again we omit the details and
refer to \cite{SJ184,SJ196,SJ204} where this method is used.

We will consider percolation of these random (multi)graphs,
where we first generate a random graph \gndx{}
and then delete either vertices
or edges at random. 
(From now on, we simply write 'graph' for 'multigraph'.)
The methods below can be combined to treat the
case of 
random deletion of both vertices and edges, which is
studied by other methods in \eg{} \citet{SJ199},
but we leave this to the reader. 

To be precise, 
we consider the
following two constructions, given any graph $G$ and a probability
$\pi\in[0,1]$.
\begin{description}
  \item[Site percolation]
Randomly delete each vertex (together with all incident edges) 
with probability $1-\pi$, 
independently of all other vertices.
We denote the resulting random graph by $G\vpi$.
  \item[Bond percolation]
Randomly delete each edge with
probability $1-\pi$, independently of all other edges.
(All vertices are left.)
We denote the resulting random graph by $G\epi$.
\end{description}
Thus $\pi$ denotes the probability to be kept in the percolation
model.
When, as in our case, the original graph $G$ itself is random, it is further
assumed that 
we first sample $G$ and then proceed as above, 
conditionally on $G$.

The cases $\pi=0,1$ are trivial: 
$G\xxv1=G\xxe1=G$, while
$G\xxv0=\emptyset$, the totally empty graph with no vertices and no edges, and
$G\xxe0$ is the empty graph with the same edge set as $G$ but
no edges. We will thus mainly consider $0<\pi<1$.

We may generalize the site percolation model by letting the
probability depend on the degree of the vertex. Thus, if 
$\pix=(\pi_d)_0^\infty$ is a given sequence of probabilities $\pi_d\in\oi$,
let $G\vpix$ be the random graph obtained by deleting vertices
independently of each other, with vertex
$v\in G$ deleted with probability $1-\pi_{d(v)}$ where $d(v)$ is the degree of
$v$ in $G$.

For simplicity and in order to concentrate on the main ideas,
we will in this paper consider only the case when the
probability $\pi$ (or the sequence $\pix$) is fixed and thus does not
depend on $n$, with the exception of a few remarks where we briefly
indicate how the method can be used also for a more detailed study of
thresholds.

The present paper is inspired by \citet{Fount},
and we follow his idea of deriving results for the percolation models
$\gndx\vpix$ and $\gndx\epi$ from results for the model
$\gndx$ without deletions, but for different \ds{s} $\dd$.
We will, however, use another method to do this, which we find simpler.

\citet{Fount} shows that for both site and bond percolation on \gndx, if we
condition the resulting random graph on its degree sequence $\ddz$,
and let $\nz$ be the number of its vertices, then
the graph has the 
distribution of $\gndxz$, the random graph with this degree sequence
constructed by the configuration model. He then proceeds to calculate
the distributions of the degree sequence $\ddz$ for the two
percolation models and finally applies known results to $\gndxz$.

Our method is a version of this, where we do the deletions in two steps.
For site percolation, instead of deleting a vertex, let us
first \emph{explode} it by replacing it  
by $d$ new vertices of degree 1, where $d$ is its degree;
we further colour the new vertices \emph{red}.
Then clean up by removing all red vertices.
Note that the (random) explosions change the number of vertices, but
not the number of half-edges.
Moreover, given the set of explosions, there is a one-to-one
correspondence between configurations before and after the explosions,
and thus, if we condition on the new degree sequence,
the exploded graph is still described by the configuration model.
Furthermore, by symmetry, when removing the red vertices, all vertices of
degree 1 are equivalent, so we may just as well remove the right
number of vertices of degree 1, but choose them uniformly at random.
Hence, we can obtain $\gndx\vpi$ as follows:
\begin{description}
  \item[Site percolation]
For each vertex $i$, replace it 
with probability $1-\pi$
by $d_i$ new vertices of degree $1$ (independently of all other vertices).
Let $\ddyv$ be the resulting (random) degree sequence, 
let $\ny$ be its length (the number of vertices),
and let $\nr$ be the
number of new vertices. Construct the random graph $\gnxy{\ddyv}$. Finish by
deleting $\nr$ randomly chosen vertices of degree $1$.
\end{description}

The more general case when we are given a sequence
$\pix=(\pi_d)_0^\infty$ is handled in  the same way:

\begin{description}
  \item[Site percolation, general]
For each vertex $i$, replace it 
with probability $1-\pi_{d_i}$
by $d_i$ new vertices of degree $1$.
Let $\ddyvx$ be the resulting (random) degree sequence, 
let $\ny$ be its length (the number of vertices),
and let $\nr$ be the
number of new vertices. Construct the random graph $\gnxy{\ddyvx}$. Finish by
deleting $\nr$ randomly chosen vertices of degree $1$.
\end{description}

\begin{remark}\label{Rfixed}
  We have here assumed that vertices are deleted at random,
  independently of each other. This is not essential for our method,
  which may be further extended to the case when we remove a set
  of vertices determined by any random procedure that is independent
  of the edges in \gnd{} (but may depend on the vertex degrees).
For example, we may remove a fixed number $m$ of vertices, chosen
  uniformly at random. It is easily seen that if $m/n\to\pi$, the
  results of \refSS{SSvertex} below still hold (with all $\pi_j=\pi$), 
and thus the results of the later sections hold too.
Another, deterministic, 
example is to remove the first $m$ vertices.
\end{remark}

For bond percolation, we instead explode each half-edge
with probability $1-\sqrt \pi$, independently of all other half-edges;
to explode a half-edge means that we
disconnect it from its vertex and
transfer it to a new, red 
vertex of degree 1. Again this does not change the number of
half-edges, and there is a one-to-one correspondence between
configurations before and after the explosions. We finish by removing
all red vertices and their incident edges. Since an edge consists of
two half-edges, and each survives with probability $\sqrt \pi$, this
gives the bond percolation model $\gndx\epi$ where edges are kept with
probability $\pi$.
This yields the following recipe:

\begin{description}
  \item[Bond percolation]
Replace the degrees $d_i$ in the sequence $\dd$ by independent random
degrees $\dy_i\sim\Bi(d_i,\sqrt \pi)$. Add $\nr\=\sumin (d_i -\dy_i)$ new
degrees 1 to the sequence $(\dy_i)_1^n$, and
let $\ddye$ be the resulting degree sequence
and $\ny=n+\nr$ its length.
Construct the random graph $\gnxy{\ddye}$. Finish by
deleting $\nr$ randomly chosen vertices of degree $1$.
\end{description}

In both cases, we have reduced the problem to a simple (random)
modification of the \ds, plus a random removal of a set of vertices of
degree 1. 
The latter is often more or less trivial to handle, see the
applications below. 
We continue to call the removed vertices red when convenient.

Of course, to use this method, it is essential to find the degree
sequence $\ddy$ after the explosions. We study this in \refS{Sdegrees}.
We then apply this method to three different problems:

\emph{Existence of a giant component} in the percolated graph, \ie, what
is called \emph{percolation} in random graph theory
(\refS{Sgiant}). Our results include
and extend earlier work by \citet{Fount}, which inspired the present study,
and some of the results by \citet{SJ199}.

\emph{Existence of a $k$-core} in the percolated graph (\refS{Score}).
We obtain a general result analogous to
(and extending) the well-known result by
\citet{PSW} for $\gnp$. We study the phase transitions that may occur
in some detail and show by examples that it is possible to have
several, and even an infinite number of, different phase
transitions as the probability $\pi$ increases from 0 to 1.

\emph{Bootstrap percolation in random regular graphs} 
(\refS{Sbootstrap}), where we obtain a new and simpler
proof of results by 
\citet{BalPitt}.

For a graph $G$, let 
$v(G)$ and $e(G)$ denote the numbers of  vertices and edges in $G$,
respectively, and let $v_j(G)$ be the number of vertices of
degree $j$, $j\ge0$.
We sometimes use $\gndx\xpi$ to denote any of the percolation
models $\gndx\vpi$, $\gndx\vpix$ or $\gndx\epi$.

\section{The degree sequence after explosions}\label{Sdegrees}

Let $n_j\=\nrset{i\le n:d_i=j}$, the number 
$v_j(\gndx)$
of vertices of degree $j$
in \gndx. Thus $\sumj n_j=n$.
We assume for simplicity the following regularity condition.

\begin{condition}\label{C1}
There exists
a probability distribution 
$(p_j)_{j=0}^\infty$ with finite positive mean
$\gl\=\sum_j j p_j\in(0,\infty)$
such that (as \ntoo)
\begin{equation}
  \label{njlim}
n_j/n\to\pixz_j,
\qquad j\ge0,
\end{equation}
and 
\begin{equation}
  \label{njsum}
\frac{\sumj jn_j}{n}\to\gl\=\sumj j\pixz_j.
\end{equation}
\end{condition}
Note that, in order to avoid trivialities, we assume that 
$\gl>0$, which is equivalent to
$p_0<1$.
Thus, there is a positive fraction of vertices of degree at least 1.

Note that $\sum_j jn_j=\sum_i d_i$ equals twice the number of edges in
\gndx, 
and that \eqref{njsum} says that the average degree in \gndx{} converges
to $\gl$.

Let the random variable $\xD=\xD_n$ 
be the degree of a random vertex in \gndx{}, thus $\xD_n$ has
the distribution $\P(\xD_n=j)=n_j/n$, and let
$\Doo$ be a random variable with the distribution $(\pixz_j)\zooo$.
Then \eqref{njlim} is equivalent to
$\xD_n\dto \Doo$, and \eqref{njsum} is $\E \xD_n\to\gl=\E\Doo$.
Further, assuming \eqref{njlim}, 
\eqref{njsum} is equivalent to uniform integrability of $\xD_n$, or
equivalently uniform
summability (as \ntoo) 
of $\sum_j jn_j/n$, see for example \citet[Theorem 5.5.9 and Remark
  5.5.4]{Gut}. 

\begin{remark}
  \label{Ropn}
The uniform summability of $\sum_j jn_j/n$ is easily seen to imply
that if $H$ is any (random or deterministic) subgraph on $\gndx$ with
$v(H)=o(n)$, then $e(H)=o(n)$, and similarly with $\opn$.
\end{remark}

We will also use the \pgf{} of the asymptotic degree
distribution $\Doo$:
\begin{equation}
  \label{g}
g_{\Doo}(x)\=\E x^{\Doo}=\sumj p_j x^j,
\end{equation}
defined at least for $|x|\le1$.

We perform either site or bond percolation as in \refS{Sintro}, 
by the explosion method described there,
and
let $\ny_j\=\nrset{i\le \ny:\dy_i=j}$ be the number of
vertices of degree $j$ after the explosions. Thus 
\begin{equation}
  \label{ny}
\sumj \ny_j=\ny.
\end{equation}


It is  easy to find the distribution of $(\ny_j)$ and its
asymptotics for our two percolation models.

\subsection{Site percolation}\label{SSvertex}
We treat the general version with a sequence $\pix$.
Let $\nyo_j$ be the number of vertices of degree $j$ that are not
exploded.
Then
\begin{align}
  \nyo_j&\sim\Bi(n_j,\pi_j)\quad
\text{(independent of each other)},
\\
\nr &= \sumj j(n_j-\nyo_j),
\\
\ny_j &= \nyo_j, \quad j\neq 1,
\\
\ny_1&= \nyo_1+\nr.
\end{align}
By the law of large numbers, $\nyo_j=n_j\pi_j+o_p(n)$ and thus, using the
assumption \eqref{njlim} and the uniform summability of $\sum_j jn_j/n$
(which enables us to treat the infinite sums in \eqref{nrv} and
\eqref{nyv} by a standard argument),
\begin{align}
  \nyo_j&=n_j\pi_j+\opn=\pi_j\pixz_j n+\opn,
\label{n0jv}
\\
\nr &= \sumj j(1-\pi_j)\pixz_jn+\opn,
\label{nrv}
\\
\ny_j &= \pi_j\pixz_j n+\opn, \quad j\neq 1,
\label{nyjv}
\\
\ny_1&= \Bigpar{\pi_1\pixz_1+\sumj j(1-\pi_j)\pixz_j} n+\opn,
\label{ny1v}
\\
\ny&= \sumj \bigpar{\pi_j+j(1-\pi_j)}\pixz_j n+\opn.
\label{nyv}
\end{align}

We can write \eqref{nyv} as
\begin{equation} \label{zetav}
 \frac{\ny}n\pto
\zeta:=\sumj \bigpar{\pi_j+j(1-\pi_j)}\pixz_j>0.
\end{equation}
Further, by \eqref{nyjv} and \eqref{ny1v},
\begin{equation}  \label{nyjlimv}
  \frac{\ny_j}{\ny}
\pto
\py_j\=
\begin{cases}
  \zeta\qw\pi_j p_j,& j\neq1, \\
  \zeta\qw \bigpar{\pi_1\pixz_1+\sumji j(1-\pi_j)\pixz_j}, & j=1.
\end{cases}
\end{equation}
Since $\ny_j\le n_j$ for $j\ge2$ and $\ny\ge n-n_0$, the uniform
summability of $jn_j/n$ implies uniform summability of $j\ny_j/\ny$, and
thus also
\begin{equation}
  \label{nyjsum}
\frac{\sumj j\ny_j}{\ny}
\pto
\gly\=
\sumj j\py_j
<\infty.
\end{equation}
Hence \refCond{C1} holds, in probability, for the random degree sequence
$\ddy$ too.
Further,
the total number of half-edges is not changed by the
explosions, and thus also, by \eqref{zetav} and \eqref{njsum},
\begin{equation}
  \label{nyjsum1}
\frac{\sumj j\ny_j}{\ny}
=
\frac{\sumj jn_j}{\ny}
=
\frac{n}{\ny}\cdot
\frac{\sumj jn_j}{n}
\pto
\zeta\qw\gl;
\end{equation}
hence (or by \eqref{nyjlimv}),
\begin{equation}\label{gly}
  \gly=\zeta\qw\gl.
\end{equation}

In the proofs below it will be convenient to assume that
\eqref{nyjlimv} and   \eqref{nyjsum} hold \as, and not just in
probability, so that \refCond{C1} \as{} holds for $\ddy$; we can
assume this without loss of generality by the
Skorohod coupling theorem \cite[Theorem 4.30]{Kallenberg}.
(Alternatively, one can argue by selecting suitable subsequences.)

Let $\Dy$ have the probability distribution $(\py_j)$, and let $\gdp$
be its \pgf. Then, by \eqref{nyjlimv},
\begin{equation}\label{gdy}
 \zeta \gdp(x)=\sumj \zeta \py_j x^j
=\sumj \pi_jp_j x^j + \sumj j(1-\pi_j)p_j x
=\gl x+\sumj \pi_jp_j (x^j-jx).
\end{equation}
In particular, if all $\pi_j=\pi$,
\begin{equation}\label{gdya}
 \zeta \gdp(x)
=\pi \gdoo(x) + (1-\pi)\gl x,
\end{equation}
where now $\zeta=\pi+(1-\pi)\gl$.

\subsection{Bond percolation}\label{SSedge}

For bond percolation, 
we have  explosions that do not destroy
the vertices, but they may reduce their degrees.
Let $\ny_{jl}$ be the number of vertices that had degree $l$ before the
explosions and $j$ after. Thus $\ny_j=\sum_{l\ge j} \ny_{jl}$ for
$j\neq1$ and $\ny_1=\sum_{l\ge 1} \ny_{1l}+\nr$.
A vertex of degree $l$ will after the explosions have a degree with
the binomial distribution $\Bi(l,\piqq)$, and thus the probability that
it will become a vertex of degree $j$ is the binomial probability
$\bljpiqq$, where we define
\begin{equation}\label{bi}
 \bljp\=
\P\bigpar{\Bi(l,p)=j}
=
\binom lj p^{j}(1-p)^{l-j}.
\end{equation}
Since explosions at different vertices occur independently, this
means that, for $l\ge j\ge 0$,
\begin{equation*}
\ny_{jl}\sim\Bi\bigpar{n_l,\bljpiqq}
\end{equation*}
and thus, by the law of large numbers and \eqref{njlim},
\begin{equation*}
\ny_{jl}=
\bljpiqq p_l n+\opn.
\end{equation*}
Further, the number $\nr$ of new vertices equals the number of
explosions, and thus has the binomial distribution $\Bi(\sum_l ln_l,1-\piqq)$.
Consequently, using 
also \eqref{njsum} and the uniform summability of $\sum_j jn_j/n$,
\begin{align}
\nr &
=\sum_l ln_l(1-\piqq)+\opn
=(1-\piqq)\gl n+\opn,
\label{nre}
\\
\ny_j &= \sum_{l\ge j} \ny_{jl}
=
\sum_{l\ge j} \bljpiqq p_l n+\opn,
\quad j\neq 1,
\label{nyje}
\\
\ny_1 &= \sum_{l\ge 1} \ny_{1l}+\nr
=
\sum_{l\ge 1} \blipiqq p_l n
+\bigpar{1-\piqq}\gl n+\opn,
\label{ny1e}
\\
\ny&
= n+\nr
= 
n+\bigpar{1-\piqq}\gl n+\opn. \label{nye}
\end{align}

In analogy with site percolation we thus have, by \eqref{nye},
\begin{equation} \label{zetae}
 \frac{\ny}n\pto
\zeta:=1+\bigpar{1-\piqq}\gl
\end{equation}
and further, by \eqref{nyje} and \eqref{ny1e},
\begin{equation}  \label{nyjlime}
  \frac{\ny_j}{\ny}
\pto
\py_j\=
\begin{cases}
  \zeta\qw \sum_{l\ge j} \bljpiqq p_l,
   & j\neq1, \\
  \zeta\qw 
\Bigpar{
\sum_{l\ge 1} \blipiqq p_l +\bigpar{1-\piqq}\gl},
& j=1.
\end{cases}
\end{equation}
Again, the uniform
summability of $jn_j/n$ implies uniform summability of $j\ny_j/\ny$, and
the total number of half-edges is not changed;
thus \eqref{nyjsum}, \eqref{nyjsum1} and \eqref{gly} hold, now with
$\zeta$ given by \eqref{zetae}. 
Hence \refCond{C1} holds in probability for the degree sequences
$\ddy$ in bond percolation too,
and by
the Skorohod coupling theorem we may assume that
it holds
a.s.

The formula for $\py_j$ is a bit complicated, but there is a simple
formula for the \pgf{} $\gdp$. We have, by the binomial theorem,
$\sum_{j\le l} \bxxx lj{\pi} x^j = (1-\pi + \pi x)^l$, and thus
\eqref{nyjlime} yields
\begin{equation}\label{gdye}
  \begin{split}
\zeta\gdp(x)
&= \suml (1-\piqq+\piqq x)^l p_l + (1-\piqq)\gl x
\\&
= \gdoo (1-\piqq+\piqq x) + (1-\piqq)\gl x.
  \end{split}
\end{equation}

\section{Giant component}\label{Sgiant}

The question of existence of a giant component
in  \gnd{} and \gndx{} was answered by
\citet{MR95}, who showed that 
(under some weak technical assumptions)
a giant component
exists \whp{}
if and only if (in the notation above) $\E \Doo(\Doo-2)>0$.
(The term \emph{giant component} is in this paper used, somewhat
informally, 
for a component containing at least a fractions $\eps$ of all vertices,
for some small $\eps>0$ that does not depend on $n$.)
They further gave a formula for the size of this giant component in
\citet{MR98}.
We will use the following version of their result, given by
\citet[Theorem 2.3 and Remark 2.6]{SJ204}.
Let, for any graph $G$, $\cC_k(G)$ denote the $k$:th largest component
of $G$. (Break ties by any rule. If there are fewer that $k$
components, let $\cC_k\=\emptyset$.)

\begin{proposition} [\cite{MR98,SJ204}]
  \label{PT1}
Consider \gndx, assuming that \refCond{C1} holds and $p_1>0$.
Let\/ $\cC_k\=\cC_k(\gndx)$
and let
$\gdoo(x)$ be the \pgf{} in \eqref{g}.
  \begin{romenumerate}
\item\label{PT1a}
If\/ $\E \Doo(\Doo-2)=\sum_j j(j-2)p_j>0$, then there is a unique $\xi\in(0,1)$
such that $g_{\Doo}'(\xi)=\gl\xi$,
and 
\begin{align}\label{t1v}
  v(\cC_1)/n&\pto 1-g_{\Doo}(\xi)>0,
\\
  v_j(\cC_1)/n&\pto p_j(1-\xi^j), \text{ for every } j\ge0,
\label{t1vj}
\\
  e(\cC_1)/n&\pto \tfrac12\gl(1-\xi^2).
\label{t1e}
\end{align}
Furthermore,
$ v(\cC_2)/n\pto 0$ and  $ e(\cC_2)/n\pto 0$.
\item\label{PT1b}
If\/ $\E \Doo(\Doo-2)=\sum_j j(j-2)p_j\le0$, then 
$ v(\cC_1)/n\pto 0$ and $e(\cC_1)/n\pto 0$.
  \end{romenumerate}
\end{proposition}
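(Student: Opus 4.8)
Since the statement is quoted from \cite{MR98,SJ204}, I only indicate the method, which is to analyse a sequential exploration of the configuration model in the spirit of Janson and \L uczak. \emph{Where $\xi$ comes from:} exploring the component of a uniformly random vertex $v$, the vertex has degree distributed as $\Doo$, and following a half-edge out of the explored region one reaches a fresh vertex whose degree is, to first order, size-biased, so it contributes $D^\sharp-1$ new half-edges with $\P(D^\sharp=j)=jp_j/\gl$. Thus the exploration is approximated by a Galton--Watson process with offspring \pgf{} $\gdoo'(x)/\gl$, which is supercritical precisely when its mean $\gdoo''(1)/\gl=\sum_j j(j-1)p_j/\gl$ exceeds $1$, i.e.\ when $\sum_j j(j-2)p_j>0$. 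Its extinction probability is the smallest root $\xi\in[0,1]$ of $\gl x=\gdoo'(x)$; since $\psi(x):=\gdoo'(x)-\gl x$ is convex with $\psi(0)=p_1>0$, $\psi(1)=0$ and, in the supercritical case, $\psi'(1)=\gdoo''(1)-\gl>0$, there is a \emph{unique} such root in $(0,1)$, while $\xi=1$ otherwise. As the $d(v)$ half-edges at $v$ generate independent subtrees, $v$ lies in a large component with asymptotic probability $1-\E\xi^{\Doo}=1-\gdoo(\xi)$, giving \eqref{t1v}; restricting to degree-$j$ vertices gives \eqref{t1vj}, and since every edge incident to a component lies in it, $2e(\cC_1)=\sum_j j\,v_j(\cC_1)$, so with \eqref{t1vj} and $\gdoo'(\xi)=\gl\xi$ one obtains $e(\cC_1)/n\pto\tfrac12\bigpar{\gl-\xi\gdoo'(\xi)}=\tfrac12\gl(1-\xi^2)$, i.e.\ \eqref{t1e}.

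To make this rigorous, run the usual exploration of \gndx: each vertex is \emph{sleeping}, \emph{active} or \emph{dead}, all sleeping at first. At each step, if there is an active half-edge pick one, reveal its partner (uniform among the not-yet-used half-edges), declare both dead, and if the partner's vertex was sleeping make it active and activate its other half-edges; if there is no active half-edge wake a uniformly random sleeping vertex; if there is none, stop. The components of \gndx are the successive excursions between wakings. Let $S_j(t)$ and $A(t)$ be the numbers of sleeping degree-$j$ vertices and of active half-edges after $t$ steps. Since the partner at each step is uniform over the $\sum_l lS_l(t)+A(t)$ live half-edges, a concentration argument --- e.g.\ via Doob martingales and Azuma's inequality, after truncating high degrees and bounding their contribution by the uniform summability of $\sum_j jn_j/n$ from \refCond{C1} --- shows $S_j(t)/n$ and $A(t)/n$ concentrate around deterministic trajectories. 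Parametrising by $x\in(0,1]$ decreasing from $1$ so that $S_j(t)/n\approx p_jx^j$ (hence $\sum_l lS_l(t)/n\approx x\gdoo'(x)$), one finds $A(t)/n\approx a(x):=\gl x^2-x\gdoo'(x)=-x\psi(x)$, which is positive on $(\xi,1)$ and first returns to $0$ as $x$ decreases through $\xi$ (transversally, since $a'(\xi)=-\xi\psi'(\xi)>0$), whereas in the (sub)critical case $\psi\ge0$ on $(0,1]$, so $a\le0$ there and the exploration never builds up active half-edges.

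Reading off the conclusion: in the supercritical case, \whp{} --- after $\opn$ vertices wasted in short, quickly dying excursions --- one excursion grows until $x$ reaches $\xi$, at which instant the number of explored degree-$j$ vertices is $n_j-S_j=p_j(1-\xi^j)n+\opn$, which is \eqref{t1vj}, whence \eqref{t1v} and \eqref{t1e} as above. Because $\psi'(\xi)<0$, i.e.\ $\gdoo''(\xi)<\gl$, the graph left on the sleeping vertices is subcritical, so this giant excursion is whp the only one of size $\ge\eps n$; thus $v(\cC_2)/n\pto0$ and, by \refR{Ropn}, $e(\cC_2)/n\pto0$. In the case $\sum_j j(j-2)p_j\le0$ we have $\psi>0$ on $[0,1)$ --- here the hypothesis $p_1>0$ is used --- so $a<0$ on $(0,1)$ and no excursion reaches $\eps n$ vertices; hence $v(\cC_1)/n\pto0$ and, again by \refR{Ropn}, $e(\cC_1)/n\pto0$.

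The main obstacle is the concentration step when the degree distribution has only a first moment, so that $\sum_j j(j-2)p_j$ may equal $+\infty$ while the graph is still ``supercritical'': the trajectory analysis and the identification of the first return of $a$ to $0$ must then be done without any second-moment assumption, by truncating high degrees and estimating the truncation error purely through the uniform summability in \refCond{C1}. The remaining points --- the martingale concentration itself and the argument that the giant excursion is whp unique --- are technical but routine.
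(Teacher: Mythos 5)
The paper itself does not prove Proposition \ref{PT1}: it is imported from \cite{MR98} and \cite[Theorem 2.3 and Remark 2.6]{SJ204}, so there is no internal proof to compare against, and your sketch is essentially the route of those references (and consistent with the branching-process picture the paper recalls in Remark \ref{Rbp}). Your trajectory heuristic is the right one — with unused half-edges parametrized as $\gl x^2 n$ one indeed gets $S_j/n\approx p_jx^j$ and $A/n\approx \gl x^2-x\gdoo'(x)$, the excursion ending at $x=\xi$ with $\gdoo'(\xi)=\gl\xi$, which reproduces \eqref{t1v}--\eqref{t1e}, and your use of $p_1>0$ and convexity for uniqueness of $\xi$ matches Remark \ref{RPT1ii}. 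The one point worth noting is that the actual proof in \cite{SJ204} circumvents the missing second moment (the issue you correctly single out) by a continuous-time ``death process'' exploration with first-moment fluid limits rather than the discrete-step Azuma-plus-truncation scheme you propose; your scheme can be made to work but that truncation control, together with the uniqueness of the large excursion and the $o_p(n)$ bound on $\cC_1$ in the subcritical case, is precisely where the real work of the cited papers lies.
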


\begin{remark}
  $\E D^2=\infty$ is allowed in \refP{PT1}\ref{PT1a}.
\end{remark}

\begin{remark}\label{RPT1ii}
In \refP{PT1}\ref{PT1b}, where $\E D(D-2)\le0$ and $p_1<0$, for $0\le x<1$
\begin{equation*}
  \begin{split}
\gl x- g_{D}'(x)
&=\sumji jp_j(x-x^{j-1})
=p_1(x-1)+x\sum_{j=2}^\infty jp_j(1-x^{j-2})
\\&
\le p_1(x-1)+x\sum_{j=2}^\infty jp_j(j-2)(1-x)
\\&
<\sumji j(j-2)p_jx(1-x)
=\E D(D-2)x(1-x)\le0.
  \end{split}
\end{equation*}
Hence, in this case the only solution in $[0,1]$ to $g_D'(\xi)=\gl
 \xi$ is $\xi=1$, which we may take as the definition in this case.
\end{remark}

\begin{remark}\label{Rbp}
  Let $\D$ be a random variable with the distribution 
  \begin{equation*}
\P(\D=j)=(j+1)\P(D=j+1)/\gl, \qquad j\ge0;
  \end{equation*}
this is the size-biased
distribution of $D$ shifted by 1, and it has a well-known natural
interpretation as follows.
Pick a random half-edge; then the number of remaining half-edges at
its endpoint has asymptotically the distribution of $\D$.
Therefore, the natural (Galton--Watson) branching process approximation of the
exploration of the successive neighbourhoods of a given vertex
is the branching process $\cXx$ with offspring distributed as $\D$, 
but starting with an initial distribution given by $D$. 
Since
\begin{equation*}
  g_{\D}(x)=\sumji\P(\D=j-1)x^{j-1}
=\sumji\frac{jp_j}{\gl}x^{j-1} 
=\frac{g_D'(x)}{\gl},
\end{equation*}
the equation $g_D'(\xi)=\gl\xi$ in \refP{PT1}\ref{PT1a} can be written
$g_{\D}(\xi)=\xi$, which shows that $\xi$ has an interpretation as the
extinction probability of the branching process $\cX$ with offspring
distribution $\D$, now starting with a single individual.
(This also agrees with the definition in \refR{RPT1ii} for the 
case \refP{PT1}\ref{PT1b}.)
Thus $g_D(\xi)$ in \eqref{t1v} is the extinction probability of $\cXx$.
Note also that  
\begin{equation*}
  \E\D=\frac{\E D(D-1)}{\gl}
=\frac{\E D(D-1)}{\E D},
\end{equation*}
so the condition $\E D(D-2)>0$, or equivalently $\E D(D-1)>\E D$, is
equivalent to $\E\D>1$, the classical condition for the branching
process to be supercritical and thus have a positive survival
probability.

The intuition behind the branching process approximation of the local
structure of a random graph at a given vertex is that an infinite approximating
branching process corresponds to the vertex being in a giant
component. This intuition agrees also with the formulas \eqref{t1vj}
and \eqref{t1e}, which reflect the fact that a vertex of degree $j$
[an edge]  belongs to the giant component if and only if one of its
$j$ attached half-edges [one of its two constituent half-edges] connects
to the giant component.
(It is rather easy to base rigorous proofs on the branching process
approximation, see \eg{} \cite{SJ199}, but in the present paper we
will only use the branching process heuristically.)
\end{remark}

Consider one of our percolation models $\gndx\xpi$, and construct it using
explosions and an intermediate random graph $\gndxy$ as
described in the introduction.
(Recall that $\ddy$ is random, while $\dd$ and the limiting
probabilities $p_j$ and $\py_j$ are not.)
Let $\cC_j\=\cC_j\bigpar{\gndx\xpi}$ 
and $\cCy_j\=\cC_j\bigpar{\gndxy}$ 
denote the components of $\gndx\xpi$,
and $\gndxy$, respectively.

As remarked in \refS{Sdegrees}, 
we may assume that 
$\gndxy$ too satisfies \refCond{C1}, with $p_j$ replaced by $\py_j$.
(At least a.s.; recall that $\ddy$ is random.)
Hence, assuming $\py_1>0$, 
if we first condition on $\ddy$, then
\refP{PT1} applies immediately to the exploded graph $\gndxy$. We
also have to remove $\nr$ randomly chosen ``red'' vertices of degree
1, but luckily this will not break up any component.
Consequently, if $\E\Dy(\Dy-2)>0$, then $\gndxy$ \whp{} has a giant
component $\cCy_1$, with $v(\cCy_1)$, $v_j(\cCy_1)$ and $e(\cCy_1)$
given by \refP{PT1}
(with $p_j$ replaced by $\py_j$), and 
after removing the red vertices, the remainder of $\ccyi$
is still connected and forms a  component $\cC$ in $\gndx\xpi$.
Furthermore, 
since $\E\Dy(\Dy-2)>0$,
$\py_j>0$ for at least one $j>2$, and it follows by
\eqref{t1vj} that $\ccyi$ contains $cn+\opn$ vertices of degree $j$,
for some $c>0$; all these belong to $\cC$ (although possibly with
smaller degrees), so $\cC$ contains \whp{} at least $cn/2$
vertices. Moreover, all other components of $\gndx\xpi$ are contained
in components of $\gndxy$ different from $\ccyi$, and thus at most as
large as $\cCy_2$, 
which by \refP{PT1} has
$o_p(\ny)=\opn$ vertices. Hence, \whp{}
$\cC$ is the largest component
$\cC_1$ of $\gndx\xpi$, and this is the unique giant component in
$\gndx\xpi$. 

Since we remove a fraction $\nr/\ny_1$ of all vertices of degree 1,
we remove by the law of large numbers (for a hypergeometric
distribution) about the same fraction of the vertices of degree 1 in
the giant component $\cCy_1$. 
More precisely, by \eqref{t1vj}, $\cCy_1$ contains about
a fraction $1-\xi$ of all vertices of degree 1,
where $\gdy'(\xi)=\gly\xi$; hence the number of
red vertices removed from $\ccyi$ is
\begin{equation}
  \label{arn}
(1-\xi)\nr+o_p(n).
\end{equation}
By \eqref{t1v} and \eqref{arn},
\begin{equation}\label{arnv}
  v(\cCi)=v(\ccyi)-(1-\xi)\nr+\opn
=\ny\bigpar{1-\gdy(\xi)}-\nr+\nr\xi+\opn.
\end{equation}
Similarly, by \eqref{t1e} and \eqref{arn}, since each red vertex that
is removed from $\cCi$ also removes one edge with it,
\begin{equation}\label{arne}
  e(\cCi)=e(\ccyi)-(1-\xi)\nr+\opn
=\tfrac12\gly\ny(1-\xi^2)-(1-\xi)\nr+\opn.
\end{equation}

The case $\E\Dy(\Dy-2)\le0$ is even simpler; since the largest
component $\cC_1$ is contained in some component $\cCy_j$ of $\gndxy$,
it follows that $v(\cC_1)\le v(\cCy_j)\le v(\cCy_1)=o_p(\ny)=\opn$. 

This leads to the following results, where we treat site and bond
percolation separately and add formulas for the asymptotic size of
$\cC_1$.

\begin{theorem}
  \label{TGv}
Consider the site percolation model $\gndx\vpix$, and
suppose that \refCond{C1} holds and
that $\pix=(\pi_d)_0^\infty$ with $0\le\pi_d\le1$; suppose further
that there exists $j\ge1$ such that $p_j>0$ and $\pi_j<1$.
Then there is \whp{} a giant component if and only if 
\begin{equation}
  \label{tgv}
\sumj j(j-1)\pi_jp_j > \gl\=\sumj jp_j.
\end{equation}
  \begin{romenumerate}
\item\label{TGva}
If \eqref{tgv} holds,
then there is a unique $\xi=\xi\xv(\pix)\in(0,1)$
such that 
\begin{equation}\label{tgvxi}
  \sumji j \pi_j p_j(1-\xi^{j-1})=\gl(1-\xi)
\end{equation}
and then
\begin{align}\label{tgvv}
  v(\cC_1)/n&\pto 
\vz\xv(\pix)\=\sumji\pi_jp_j(1-\xi^j) >0,
\\
  e(\cC_1)/n&\pto 
\ez\xv(\pix)\=(1-\xi)\sumji j\pi_jp_j-\frac{(1-\xi)^2}{2}\sumji jp_j.
\label{tgve}
\end{align}
Furthermore,
$ v(\cC_2)/n\pto 0$ and  $ e(\cC_2)/n\pto 0$.
\item\label{TGvb}
If\/ \eqref{tgv} does not hold, then
$ v(\cC_1)/n\pto 0$ and $e(\cC_1)/n\pto 0$.
  \end{romenumerate}
\end{theorem}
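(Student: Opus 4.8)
The plan is to apply \refP{PT1} to the exploded graph $\gnxy{\ddyvx}$, whose limiting degree distribution $\Dy$ has \pgf{} $\gdp$ given by \eqref{gdy}, and then translate the conclusions back through the removal of the $\nr$ red degree-$1$ vertices, using \eqref{arnv} and \eqref{arne}. First I would verify the hypotheses of \refP{PT1} for $\ddy$: \refCond{C1} holds \as{} for $\ddy$ by the discussion in \refSS{SSvertex} (using the Skorohod coupling), and $\py_1>0$ because the assumption that some $p_j>0$ with $\pi_j<1$ forces $\sumji j(1-\pi_j)p_j>0$, hence by \eqref{nyjlimv} (the $j=1$ case) $\py_1>0$ — unless $j=1$ is the only such index and $p_1$ itself is the contribution, but in all cases the formula for $\py_1$ is strictly positive. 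The key computation is then to rewrite the criterion $\E\Dy(\Dy-2)>0$, i.e. $\gdp''(1)-\gdp'(1)>0$, in terms of the original data. Differentiating \eqref{gdy} twice and evaluating at $x=1$: from $\zeta\gdp(x)=\gl x+\sumj\pi_j p_j(x^j-jx)$ we get $\zeta\gdp'(x)=\gl+\sumj\pi_j p_j(jx^{j-1}-j)$ and $\zeta\gdp''(x)=\sumj\pi_j p_j j(j-1)x^{j-2}$, so $\zeta\bigl(\gdp''(1)-\gdp'(1)\bigr)=\sumj j(j-1)\pi_j p_j-\gl$. Since $\zeta>0$, the condition $\E\Dy(\Dy-2)>0$ is exactly \eqref{tgv}, and the dichotomy in \refP{PT1} transfers to the dichotomy claimed here, given the component-surgery argument (already carried out in the paragraphs preceding the theorem) showing that $\cCy_1$ minus its red vertices is \whp{} the unique giant component $\cC_1$ of $\gndx\vpix$, while $\E\Dy(\Dy-2)\le0$ forces $v(\cC_1)\le v(\cCy_1)=\opn$.

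For part~\ref{TGva} I would start from the defining equation $\gdy'(\xi)=\gly\xi$ for the $\xi$ of \refP{PT1} applied to $\ddy$. Using $\zeta\gdp'(x)=\gl+\sumj j\pi_j p_j(x^{j-1}-1)$ and $\gly=\zeta\qw\gl$ (equation \eqref{gly}), the equation $\gdp'(\xi)=\gly\xi$ becomes $\gl+\sumji j\pi_j p_j(\xi^{j-1}-1)=\gl\xi$, which rearranges to $\sumji j\pi_j p_j(1-\xi^{j-1})=\gl(1-\xi)$, i.e.\ \eqref{tgvxi}; uniqueness and $\xi\in(0,1)$ come directly from \refP{PT1}. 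The formula \eqref{tgvv} for $v(\cC_1)/n$: from \eqref{arnv}, $v(\cC_1)/n\to\zeta\bigl(1-\gdp(\xi)\bigr)-(1-\xi)\bigl(\nr/n\to\sumji j(1-\pi_j)p_j\bigr)$, and substituting $\zeta\gdp(\xi)=\gl\xi+\sumji\pi_j p_j(\xi^j-j\xi)$ together with $\zeta=\sumj(\pi_j+j(1-\pi_j))p_j$, a bookkeeping cancellation of the $\gl\xi$ and $\sum j(1-\pi_j)p_j$ terms leaves $v(\cC_1)/n\to\sumji\pi_j p_j(1-\xi^j)=\vz\xv(\pix)$. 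Positivity of this limit follows because \eqref{tgv} forces $\pi_j p_j>0$ for some $j\ge2$, and then $1-\xi^j>0$. The edge formula \eqref{tgve} is obtained analogously from \eqref{arne}: $e(\cC_1)/n\to\tfrac12\gly\zeta(1-\xi^2)-(1-\xi)\sumji j(1-\pi_j)p_j=\tfrac12\gl(1-\xi^2)-(1-\xi)\sumji j(1-\pi_j)p_j$, and one then uses \eqref{tgvxi} to replace $\gl(1-\xi)$ and re-collect terms into $(1-\xi)\sumji j\pi_j p_j-\tfrac12(1-\xi)^2\sumji jp_j$; this last algebraic identity is the only place where a little care is needed. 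The statements $v(\cC_2)/n\pto0$, $e(\cC_2)/n\pto0$ are immediate from the corresponding statements for $\cCy_2$ in \refP{PT1} together with $\ny/n\to\zeta$.

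Part~\ref{TGvb} is the easy case: when \eqref{tgv} fails, $\E\Dy(\Dy-2)\le0$, so \refP{PT1}\ref{PT1b} gives $v(\cCy_1)/\ny\pto0$ and $e(\cCy_1)/\ny\pto0$, and since $\cC_1\bigpar{\gndx\vpix}$ is contained in some $\cCy_j$ with $v(\cCy_j)\le v(\cCy_1)$ (and likewise for edges, via \refR{Ropn}), the same holds for $\cC_1$ after multiplying by $\ny/n\to\zeta$.

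The main obstacle I anticipate is purely the \emph{edge-count bookkeeping} in \eqref{tgve}: one must carefully track the contribution of the removed red vertices (each carrying exactly one incident edge, so the correction is $(1-\xi)\nr$, not something weighted by degree) and then invoke \eqref{tgvxi} to massage $\tfrac12\gl(1-\xi^2)-(1-\xi)\sumji j(1-\pi_j)p_j$ into the stated closed form. Writing $\tfrac12\gl(1-\xi^2)=\tfrac12(1+\xi)\cdot\gl(1-\xi)$ and substituting $\gl(1-\xi)=\sumji j\pi_j p_j(1-\xi^{j-1})$ is not quite enough on its own, so I would instead expand everything in terms of $\sumji j\pi_j p_j$, $\sumji j p_j$ and $\sumji j\pi_j p_j\xi^{j-1}$ and verify the identity termwise; this is routine but is the step most prone to sign errors. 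Everything else is a direct transcription of \refP{PT1} through the explosion construction, which has already been set up in the running text before the theorem statement.
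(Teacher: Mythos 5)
Your proposal is correct and follows essentially the same route as the paper's proof: apply \refP{PT1} to the exploded graph, identify the condition $\E\Dy(\Dy-2)>0$ with \eqref{tgv}, rewrite $\gdy'(\xi)=\gly\xi$ as \eqref{tgvxi}, and translate \eqref{arnv}--\eqref{arne} back through the removal of the red degree-one vertices. The only cosmetic differences are that you differentiate the generating function \eqref{gdy} where the paper sums \eqref{nyjlimv} directly, and your anticipated obstacle in \eqref{tgve} is not one: since $\sumji j(1-\pi_j)p_j=\gl-\sumji j\pi_jp_j$, the identity $\tfrac12\gl(1-\xi^2)-(1-\xi)\bigl(\gl-\sumji j\pi_jp_j\bigr)=(1-\xi)\sumji j\pi_jp_j-\tfrac12\gl(1-\xi)^2$ is purely algebraic and needs no appeal to \eqref{tgvxi}.
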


\begin{proof}
We apply \refP{PT1} to $\gndxy$ as discussed above.
Note that $\py_1>0$ by \eqref{nyjlimv} and the assumption
$(1-\pi_j)p_j>0$ for some $j$.
By \eqref{nyjlimv},
  \begin{equation*}
	\begin{split}
\zeta\E\Dy(\Dy-2)
&=\zeta\sumj j(j-2)\py_j	  
=\sumji j(j-2)\pi_jp_j-\sumji j(1-\pi_j)p_j
\\&
=\sumji j(j-1)\pi_jp_j-\sumji jp_j.
	\end{split}
  \end{equation*}
Hence, the condition $\E \Dy(\Dy-2)>0$ is equivalent to \eqref{tgv}.

In particular, it follows that
$v(\cC_2)=\opn$ in \ref{TGva} and $v(\cC_1)=\opn$ in \ref{TGvb}.
That also $e(\cC_2)=\opn$ in \ref{TGva} and $e(\cC_1)=\opn$ in \ref{TGvb}
follows by \refR{Ropn} applied to $\gndxy$.

It remains only to verify the formulas \eqref{tgvxi}--\eqref{tgve}.
The equation $\gdy'(\xi)=\gly\xi$ is by \eqref{gly} equivalent to
$\zeta\gdy'(\xi)=\gl \xi$, which can be written as \eqref{tgvxi} by
\eqref{nyjlimv} and a simple calculation.

By \eqref{arnv}, using \eqref{nrv}, \eqref{zetav} and \eqref{gdy},
\begin{equation*}
  \begin{split}
	v(\cCi)/n
&\pto
\zeta-\zeta\gdy(\xi)-(1-\xi)\sumji j(1-\pi_j)p_j
\\&
=
\sumj\pi_jp_j
-\sumj \bigpar{\pi_jp_j \xi^j +  j(1-\pi_j)p_j \xi}
+\xi\sumj j(1-\pi_j)p_j
\\&
=\sumj\pi_jp_j(1-\xi^j). 
  \end{split}
\end{equation*}
Similarly, by \eqref{arne},
\eqref{gly}, \eqref{zetav} and \eqref{nrv},
\begin{equation*}
  \begin{split}
e(\cCi)/n
&\pto
\tfrac12\gl(1-\xi^2)-(1-\xi)\sumji j(1-\pi_j)p_j
\\&=
(1-\xi)\sumji j\pi_jp_j	-\frac{(1-\xi)^2}{2}\gl.
  \end{split}
\qedhere
\end{equation*}
\end{proof}

In the standard case when all $\pi_d=\pi$, this leads to a simple
criterion,
which
earlier has been shown by 
\citet{SJ199} and \citet{Fount} by different methods.
(A modification of the usual branching process argument for \gndx{} in
\cite{SJ199} and a method similar to ours in \cite{Fount}.)

\begin{corollary}[\cite{SJ199,Fount}]
  \label{CGv}
Suppose that \refCond{C1} holds and $0<\pi<1$. Then 
there exists \whp{} a giant component in $\gndx\vpi$ if and only if
\begin{equation}\label{cgv}
  \pi>\pic\=\frac{\E D}{\E D(D-1)}.
\qed
\end{equation}
\end{corollary}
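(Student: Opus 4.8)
The plan is to obtain this as the special case of \refT{TGv} in which $\pi_d=\pi$ for every $d$. First I would check the hypotheses of \refT{TGv}: since $0<\pi<1$ we have $\pi_d=\pi<1$ for all $d$, and since \refCond{C1} guarantees $\gl=\sum_j jp_j>0$ (equivalently $p_0<1$), there is at least one $j\ge1$ with $p_j>0$. Hence \refT{TGv} applies to $\gndx\vpi$.

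Next I would specialize the criterion \eqref{tgv}. With $\pi_j\equiv\pi$ its left-hand side becomes $\pi\sum_j j(j-1)p_j=\pi\,\E D(D-1)$ while its right-hand side is $\gl=\E D$, so \eqref{tgv} reads $\pi\,\E D(D-1)>\E D$. By \refT{TGv} a giant component exists \whp{} in $\gndx\vpi$ if and only if this inequality holds, so it only remains to check that $\pi\,\E D(D-1)>\E D$ is equivalent to $\pi>\pic=\E D/\E D(D-1)$.

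For this last equivalence I would distinguish cases according to the value of $\E D(D-1)\in[0,\infty]$, keeping in mind that $\E D=\gl\in(0,\infty)$ by \refCond{C1}. If $0<\E D(D-1)<\infty$, dividing by $\E D(D-1)$ gives the equivalence directly. If $\E D(D-1)=\infty$, then $\pic=0$ and, since $\pi>0$, both ``$\pi>\pic$'' and ``$\pi\,\E D(D-1)>\E D$'' hold for every admissible $\pi$. If $\E D(D-1)=0$, then $p_j=0$ for all $j\ge2$, so $\pic=+\infty$ and neither statement ever holds; this is consistent because then $\E D(D-2)=-\E D<0$, so there is no giant component by \refT{TGv}\ref{TGvb}, matching $\pi<1<\pic$. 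Combining the three cases yields the stated criterion.

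I do not expect a genuine obstacle here: the corollary is essentially bookkeeping on top of \refT{TGv}. The only point requiring a little care is the degenerate behaviour of $\pic$ (namely $\pic=0$ or $\pic=+\infty$) when $\E D(D-1)$ equals $\infty$ or $0$, which is precisely what the case analysis above handles.
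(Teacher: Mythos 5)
Your proposal is correct and is essentially the paper's own (implicit) argument: the corollary is exactly Theorem~\ref{TGv} specialized to $\pi_j\equiv\pi$, with \eqref{tgv} becoming $\pi\E D(D-1)>\E D$, which is \eqref{cgv}. Your extra care with the degenerate cases $\E D(D-1)=0$ or $\infty$ (where $\pic$ is $+\infty$ or $0$) is sound and consistent with Remark~\ref{Rpic}, just spelled out more explicitly than the paper bothers to.
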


\begin{remark}
  \label{Rpic}
Note that $\pic=0$ is possible; this happens if and only if 
$\E D^2=\infty$. 
(Recall that we assume $0<\E D<\infty$, see \refCond{C1}.)
Further, $\pic\ge1$ is possible too: in this case there is \whp{} no
giant component in $\gndx$
(except possibly in the special case when $p_j=0$ for all $j\neq0,2$),
and consequently none in the subgraph
$\gndx\xpi$.

Note that by \eqref{cgv}, $\pic\in(0,1)$ if and only if
$\E D <\E D(D-1) <\infty$, \ie, if and only if $0<\E D(D-2)<\infty$.
\end{remark}

\begin{remark}
  Another case treated in \cite{SJ199} (there called \textsf{E1}) is
  $\pi_d=\ga^d$ for some $\ga\in(0,1)$. \refT{TGv} gives a new proof
  that then there is a giant component if and only if $\sumji
  j(j-1)\ga^jp_j>\gl$, which also can be written
  $\ga^2g_D''(\ga)>\gl=g_D'(1)$.
(The cases \textsf{E2} and \textsf{A} in \cite{SJ199} are more
  complicated and do not follow from the results in the present paper.)
\end{remark}

For edge percolation we similarly have the following; this too has
been shown by \citet{SJ199} and \citet{Fount}. Note that the
permutation threshold $\pi$ is the same for site and bond percolation,
as observed by \citet{Fount}.

\begin{theorem}[\cite{SJ199,Fount}]
  \label{TGe}
Consider the bond percolation model $\gndx\epi$, and
suppose that \refCond{C1} holds
and that $0<\pi<1$. 
Then there is \whp{} a giant component if and only if 
\begin{equation}\label{tge}
  \pi>\pic\=\frac{\E D}{\E D(D-1)}.
\end{equation}
\begin{romenumerate}
\item\label{TGea}
If \eqref{tge} holds, 
then there is a unique $\xi=\xi\xe(\pi)\in(0,1)$
such that 
\begin{equation}\label{tgexi}
\piqq g_D'\bigpar{1-\piqq+\piqq\xi}+(1-\piqq)\gl=\gl\xi,
\end{equation}
and then
\begin{align}\label{tgev}
  v(\cC_1)/n&\pto 
\vz\xe(\pi)\=1-g_D\bigpar{1-\piqq+\piqq\xi} >0,
\\
  e(\cC_1)/n&\pto 
\ez\xe(\pi)
\=
\piqq(1-\xi)\gl-\tfrac12\gl(1-\xi)^2.
\label{tgee}
\end{align}
Furthermore,
$ v(\cC_2)/n\pto 0$ and  $ e(\cC_2)/n\pto 0$.
\item\label{TGeb}
If\/ \eqref{tge} does not hold, then
$ v(\cC_1)/n\pto 0$ and $e(\cC_1)/n\pto 0$.
  \end{romenumerate}
\end{theorem}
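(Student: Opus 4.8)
The strategy mirrors the argument already carried out for \refT{TGv}: apply \refP{PT1} to the intermediate graph $\gndxy$ produced by the bond-percolation explosion recipe, then translate back through the removal of the $\nr$ red degree-1 vertices. First I would check the hypotheses of \refP{PT1} for $\gndxy$. By \refSS{SSedge}, \refCond{C1} holds \as{} for $\ddy$ with limiting degree distribution $(\py_j)$, and $\py_1>0$ follows from \eqref{nyjlime} since $\bigpar{1-\piqq}\gl>0$ when $0<\pi<1$. So \refP{PT1} applies after conditioning on $\ddy$.

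Next I would compute $\E\Dy(\Dy-2)$ in terms of the original degree distribution. Using the \pgf{} identity \eqref{gdye}, $\zeta\gdp(x)=\gdoo(1-\piqq+\piqq x)+(1-\piqq)\gl x$, one differentiates: $\zeta\gdp'(x)=\piqq\gdoo'(1-\piqq+\piqq x)+(1-\piqq)\gl$ and $\zeta\gdp''(x)=\pi\gdoo''(1-\piqq+\piqq x)$. Since $\E\Dy(\Dy-2)=\gdp''(1)-\gdp'(1)$ and $\gdoo''(1)=\E D(D-1)$, $\gdoo'(1)=\gl$, this gives $\zeta\E\Dy(\Dy-2)=\pi\E D(D-1)-\piqq\gl-(1-\piqq)\gl=\pi\E D(D-1)-\gl$. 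Hence $\E\Dy(\Dy-2)>0$ iff $\pi\E D(D-1)>\gl=\E D$, i.e.\ iff $\pi>\pic=\E D/\E D(D-1)$, which is \eqref{tge}. This also shows that when \eqref{tge} fails, $v(\cC_1)\le v(\cCy_1)=\opn$, and $e(\cC_1)=\opn$ by \refR{Ropn}, giving \ref{TGeb}.

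For \ref{TGea}, assume \eqref{tge}. \refP{PT1}\ref{PT1a} gives a unique $\eta\in(0,1)$ with $\gdp'(\eta)=\gly\eta$; by \eqref{gly} this is $\zeta\gdp'(\eta)=\gl\eta$, i.e.\ $\piqq\gdoo'(1-\piqq+\piqq\eta)+(1-\piqq)\gl=\gl\eta$. Writing $\xi:=\eta$ this is exactly \eqref{tgexi}, and uniqueness of $\xi\in(0,1)$ transfers directly. For the size formulas I would substitute into \eqref{arnv} and \eqref{arne}. For vertices: $v(\cCi)/n\pto\zeta(1-\gdp(\xi))-(1-\xi)(\nr/n)_{\lim}$, where $(\nr/n)_{\lim}=(1-\piqq)\gl$ by \eqref{nre}; using $\zeta\gdp(\xi)=\gdoo(1-\piqq+\piqq\xi)+(1-\piqq)\gl\xi$ and $\zeta=1+(1-\piqq)\gl$, the $(1-\piqq)\gl$ terms combine and cancel, leaving $v(\cCi)/n\pto 1-\gdoo(1-\piqq+\piqq\xi)$, which is \eqref{tgev}. (Positivity is part of \refP{PT1}\ref{PT1a}, transferred through the explosion as in the text.) For edges: by \eqref{arne}, $e(\cCi)/n\pto\tfrac12\gly\ny/n\cdot(1-\xi^2)-(1-\xi)(1-\piqq)\gl$; since $\gly\ny/n\pto\gl$ (half-edges are conserved, cf.\ \eqref{nyjsum1}), this is $\tfrac12\gl(1-\xi^2)-(1-\xi)(1-\piqq)\gl=(1-\xi)\gl\bigpar{\tfrac12(1+\xi)-(1-\piqq)}=(1-\xi)\gl\bigpar{\piqq-\tfrac12(1-\xi)}=\piqq(1-\xi)\gl-\tfrac12\gl(1-\xi)^2$, which is \eqref{tgee}. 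Finally $v(\cC_2),e(\cC_2)=\opn$ since every component of $\gndx\epi$ other than $\cC_1$ sits inside a component of $\gndxy$ no larger than $\cCy_2$, and \refP{PT1} gives $v(\cCy_2)=\opn$, with $e(\cCy_2)=\opn$ by \refR{Ropn}.

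**Main obstacle.** The conceptual content is entirely in \refP{PT1} and the degree-sequence computation of \refSS{SSedge}; no new idea is needed. The only genuinely delicate point — already handled once for \refT{TGv} and reused verbatim here — is the claim that removing the $\nr$ uniformly-random red degree-1 vertices removes only a fraction $\approx 1-\xi$ of $\cCy_1$'s degree-1 vertices (a hypergeometric law-of-large-numbers argument, since the red vertices are a uniform subset of \emph{all} degree-1 vertices and $\cCy_1$ contains a $(1-\xi)$-fraction of them by \eqref{t1vj}) and breaks no component apart (each removed vertex has degree 1). The rest is bookkeeping with the \pgf{} identity \eqref{gdye}; the arithmetic cancellations that turn \eqref{arnv}–\eqref{arne} into \eqref{tgev}–\eqref{tgee} are routine but must be done carefully to see the $(1-\piqq)\gl$ terms disappear.
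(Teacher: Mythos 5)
Your proposal is correct and follows essentially the same route as the paper's proof: apply \refP{PT1} to the exploded graph $\gndxy$, compute $\zeta\E\Dy(\Dy-2)=\pi\E D(D-1)-\gl$ via \eqref{gdye} to get the threshold, and then recover \eqref{tgexi}--\eqref{tgee} from \eqref{arnv}--\eqref{arne} using \eqref{zetae}, \eqref{nre} and \eqref{gly}, with the red degree-1 removals handled exactly as in the discussion preceding \refT{TGv}. The algebraic cancellations you carry out match the paper's computation.
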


\begin{proof}
We argue as in the proof of \refT{TGv}, noting that $\py_1>0$  by
\eqref{nyjlime}.
By \eqref{gdye},
  \begin{equation*}
	\begin{split}
\zeta\E\Dy(\Dy-2)
&=\zeta\gdy''(1)-\zeta\gdy'(1)
=\pi g_D''(1)-\piqq g_D'(1)-(1-\piqq)\gl
\\&
=\pi\E D(D-1)-\gl,
	\end{split}
  \end{equation*}
which yields the criterion \eqref{tge}.
Further, if \eqref{tge} holds, then 
the equation $\gdy'(\xi)=\gly\xi$, which by \eqref{gly} is
equivalent to $\zeta\gdy'(\xi)=\zeta\gly\xi=\gl\xi$,
becomes \eqref{tgexi} by \eqref{gdye}.

By \eqref{arnv}, \eqref{zetae}, \eqref{nre} and \eqref{gdye},
\begin{equation*}
  \begin{split}
	v(\cCi)/n
&\pto
\zeta-\zeta\gdy(\xi)-(1-\xi)(1-\piqq)\gl
=1-g_D\bigpar{1-\piqq+\piqq\xi)},
  \end{split}
\end{equation*}
which is \eqref{tgev}.
Similarly, 
\eqref{arne}, \eqref{zetae}, \eqref{gly} and \eqref{nre} yield
\begin{equation*}
    e(\cC_1)/n\pto 
\tfrac12\gl(1-\xi^2)-(1-\xi)(1-\piqq)\gl
=\piqq(1-\xi)\gl-\tfrac12\gl(1-\xi)^2,
\end{equation*}
which is \eqref{tgee}.
The rest is as above.
\end{proof}

\begin{remark}\label{Rcritical}
It may come as a surprise that we have the same criterion \eqref{cgv}
and \eqref{tge} for site and bond percolation, since the proofs above
arrive at this equation in somewhat different ways. However, remember
that all results here are consistent with the standard branching
process approximation in \refR{Rbp}
(even if our proofs use different arguments)
and it is obvious that both site and bond percolation affect the mean
number of offspring in the branching process in the same way, namely
by multiplication by $\pi$. Cf.\ \cite{SJ199}, where the proofs are
based on such branching process approximations.
\end{remark}

Define
\begin{align}
  \label{rho}
\rho\xv=\rho\xv(\pix)\=1-\xi\xv(\pix) 
&&
\text{and}
&&&
\rho\xe=\rho\xe(\pi)\=1-\xi\xe(\pi);
\end{align}
recall from \refR{Rbp} 
that $\xi\xv$ and $\xi\xe$ are the extinction probabilities in the two
branching processes defined by the site and bond percolation models,
and thus $\rho\xv$ and $\rho\xe$ are the corresponding survival probabilities.
For bond percolation, \eqref{tgexi}--\eqref{tgee} can be written in the
somewhat simpler forms
\begin{align}
\piqq g_D'\bigpar{1-\piqq\rhoe}&=\gl(\piqq-\rhoe),
\label{tgerho}
\\ 
  v(\cC_1)/n
\pto 
\vz\xe(\pi)&\=1-g_D\bigpar{1-\piqq\rhoe(\pi)},
\label{tgerhov}
\\
  e(\cC_1)/n\pto 
\ez\xe(\pi)&\=
\piqq\gl\rhoe(\pi)-\tfrac12\gl\rhoe(\pi)^2.
\label{tgerhoe}
\end{align}
Note further that if we consider 
site percolation with all $\pi_j=\pi$, 
\eqref{tgvxi} can be written 
\begin{equation}\label{tgvrho}
  \pi \bigpar{\gl-g_D'(1-\rhov)}=\gl\rhov
\end{equation}
and it follows by comparison with \eqref{tgerho} that
\begin{equation}\label{tgwrho}
  \rhov(\pi)=\piqq\rhoe(\pi).
\end{equation}
Furthermore, \eqref{tgvv}, \eqref{tgve}, \eqref{tgerhov} and
\eqref{tgerhoe} now yield
\begin{align}\label{tgwv}
\vz\xv(\pi)
&=\pi \bigpar{1-g_D(\xi\xv(\pi))} 
=\pi \bigpar{1-g_D(1-\rhov(\pi))} 
=\pi\vz\xe(\pi),
\\
\ez\xv(\pi)&=\pi\gl\rhov(\pi)-\tfrac12\gl \rhov(\pi)^2
=\pi\ez\xe(\pi).
\label{tgwe}
\end{align}

We next consider how the various
parameters above depend on $\pi$, for both
site percolation and bond percolation, where for
site percolation we in the remainder of this section
consider only the case when all $\pi_j=\pi$.

We have so far defined the parameters for $\pi\in(\pic,1)$ only; we
extend the definitions by letting $\xi\xv\=\xi\xe\=1$ and
$\rhov\=\rhoe\=\vz\xv\=\vz\xe\=\ez\xv\=\ez\xe\=0$ for $\pi\le\pic$,
noting that this is compatible with the branching process
interpretation of $\xi$ and $\rho$ in \refR{Rbp} and that the
equalities in \eqref{tgvxi}--\eqref{tgwe} hold trivially.

\begin{theorem}
  \label{Trho} 
Assume \refCond{C1}.
The functions $\xi\xv\x \rho\xv\x \vz\xv\x \ez\xv\x
\xi\xe\x \rho\xe\x \vz\xe\x \ez\xe$
are continuous functions of $\pi\in(0,1)$ and are analytic 
except at $\pi=\pic$. 
(Hence, the functions are analytic in $(0,1)$ if and only if $\pic=0$
or $\pic\ge1$.)
\end{theorem}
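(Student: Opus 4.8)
The plan is to analyze the defining equations $g_D'(\xi) = \gl \xi$ (for site percolation) and $\piqq g_D'(1-\piqq+\piqq\xi) + (1-\piqq)\gl = \gl\xi$ (for bond percolation) as implicit equations in $(\pi,\xi)$, and apply the implicit function theorem together with a monotonicity argument at $\pi=\pic$. First I would treat bond percolation, since by \eqref{tgwrho}--\eqref{tgwe} the site-percolation quantities (in the case all $\pi_j=\pi$) are explicit elementary functions of $\piqq$, $\rhoe$, $\vz\xe$, $\ez\xe$, so continuity and analyticity for the site case follow immediately from the bond case. So the real content is the four functions $\xi\xe, \rhoe=1-\xi\xe, \vz\xe, \ez\xe$.

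For analyticity on $(\pic,1)$: fix $\pi_0\in(\pic,1)$ and consider $F(\pi,\xi) \= \piqq g_D'(1-\piqq+\piqq\xi) + (1-\piqq)\gl - \gl\xi$ as a function of two real variables near $(\pi_0,\xi\xe(\pi_0))$. Since $g_D$ is analytic on $(-1,1)$ and $1-\piqq+\piqq\xi\xe(\pi_0) < 1$ strictly (because $\xi\xe(\pi_0)\in(0,1)$ and $\piqq>0$), $F$ is real-analytic in a neighbourhood of this point. I would then check $\partial F/\partial\xi \neq 0$ at $(\pi_0,\xi\xe(\pi_0))$: this is $\pi g_D''(1-\piqq+\piqq\xi) - \gl$, and the standard convexity/branching-process argument (as in \refR{RPT1ii}, applied to $\gndxy$, or equivalently the fact that the relevant fixed-point map $x\mapsto \gdy(x)$ of \refR{Rbp} has derivative $<1$ at its unique fixed point in $(0,1)$) shows this is strictly negative. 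Hence the analytic implicit function theorem gives that $\xi\xe$ is analytic near $\pi_0$; then $\rhoe$, $\vz\xe$, $\ez\xe$ are analytic there too since they are obtained from $\xi\xe$ by composition with the analytic function $g_D$ and elementary operations (using that the argument $1-\piqq\rhoe$ stays $<1$). On $(0,\pic)$ all four functions are identically $0$, hence trivially analytic. The same IFT argument handles analyticity of the site-percolation functions directly from \eqref{tgvxi} as an alternative, but routing through \eqref{tgwrho}--\eqref{tgwe} is cleaner.

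For continuity at $\pi=\pic$: the one-sided limit from the left is $0$ by definition, so I need $\xi\xe(\pi)\to 1$ as $\pi\downarrow\pic$, equivalently $\rhoe(\pi)\to0$. Here I would use monotonicity: differentiating $F(\pi,\xi\xe(\pi))=0$ or arguing directly from the branching-process picture, $\xi\xe$ is (weakly) decreasing in $\pi$ on $(\pic,1)$, so $\lim_{\pi\downarrow\pic}\xi\xe(\pi)$ exists in $[0,1]$; call it $\xi_*$. Taking the limit in the defining equation $\piqq g_D'(1-\piqq+\piqq\xi\xe(\pi)) + (1-\piqq)\gl = \gl\xi\xe(\pi)$ — which is legitimate since $g_D'$ is continuous and bounded on $[0,1]$ by $\gl$ — gives $\qc^{1/2} g_D'(1-\qc^{1/2}+\qc^{1/2}\xi_*) + (1-\qc^{1/2})\gl = \gl\xi_*$, i.e. $\xi_*$ solves the defining equation at $\pi=\pic$. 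But at $\pi=\pic$ we have $\pic \E D(D-1) = \gl$ (equality in the criterion \eqref{tge}), which makes $\xi=1$ the \emph{unique} root in $[0,1]$ of this equation — this is exactly the borderline case of \refP{PT1}, cf.\ \refR{RPT1ii} applied to the limiting degree distribution $\Dy$ at $\pi=\pic$ (where $\E\Dy(\Dy-2)=0$). Hence $\xi_*=1$, so $\rhoe$, $\vz\xe$, $\ez\xe$ all tend to $0$; this gives continuity at $\pic$ for the bond functions, and then \eqref{tgwrho}--\eqref{tgwe} transfer it to the site functions. Finally, the parenthetical remark is immediate: analyticity fails at $\pic$ only when $\pic\in(0,1)$ is an actual interior point, i.e. $\pic\notin\{0\}$ and $\pic<1$.

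The main obstacle I expect is the transition point $\pi=\pic$: one must be careful that the limiting equation really forces $\xi_*=1$ and not some other root, which is precisely the subtle uniqueness statement of \refP{PT1}\ref{PT1b} (equivalently \refR{RPT1ii}) in the critical case $\E\Dy(\Dy-2)=0$; all the off-critical work is a routine application of the analytic implicit function theorem once one notes $\partial F/\partial\xi\neq0$, which itself is just the usual strict-convexity fact about branching-process generating functions. A minor bookkeeping point is to keep all arguments of $g_D$ and its derivatives strictly inside $(-1,1)$ (or handle the boundary via continuity of $g_D$ and $g_D'$ up to $1$) so that analyticity of $g_D$ can be invoked; since $\xi\xe(\pi)\in(0,1)$ for $\pi\in(\pic,1)$ this causes no trouble away from $\pic$.
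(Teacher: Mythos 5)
Your proposal is correct and follows essentially the same route as the paper's proof: the analytic implicit function theorem on $(\pic,1)$ with a nonvanishing $\xi$-derivative (your fixed-point-derivative-$<1$ fact plays the role of the paper's strict convexity of $\gdy'(\xi)/\xi$, which uses $\py_1>0$), passage to the limit in the defining equation together with the uniqueness of the root $\xi=1$ in the critical case (\refR{RPT1ii} applied to $\Dy$) for continuity at $\pic$, and the identities \eqref{tgwrho}--\eqref{tgwe} to transfer among the eight functions. The only cosmetic differences are that the paper takes $\xi\xv$ rather than the bond quantities as the primary object and obtains the limit at $\pic$ via convergent (sub)sequences instead of your monotonicity argument.
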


\begin{proof} 
It suffices to show this for $\xi\xv$; the result for the other
functions then follows by \eqref{rho} and
\eqref{tgwrho}--\eqref{tgwe}.
Since the case $\pi\le\pic$ is trivial, it suffices to consider
$\pi\ge\pic$, and we may thus assume that $0\le\pic<1$.

If $\pi\in(\pic,1)$, then, as shown above,
$\gdy'(\xiv)=\gly\xiv$, or, equivalently,
$G(\xiv,\pi)=0$, where
$G(\xi,\pi)\=\gdy'(\xi)/\xi-\gly$ is an analytic function of
$(\xi,\pi)\in(0,1)^2$. 
Moreover, $G(\xi,\pi)$ is a strictly convex function of $\xi\in(0,1]$
for any $\pi\in(0,1)$, and $G(\xiv,\pi)=G(1,\pi)=0$; hence $\partial
G(\xiv,\pi)/\partial\xi<0$. 
The implicit function theorem now shows that $\xiv(\pi)$ is analytic
for $\pi\in(\pic,1)$.

For continuity at $\pic$, suppose $\pic\in(0,1)$ and let
$\hxi=\lim_{\ntoo}\xiv(\pi_n)$ for some sequence $\pi_n\to\pic$.
Then, writing $\Dy(\pi)$ and $\gly(\pi)$ to show the dependence on
$\pi$,
$g'_{\Dy(\pi_n)}(\xiv(\pi_n))=\gly(\pi_n)\xiv(\pi_n)$
and thus by continuity, \eg{} using \eqref{gdye},
$g'_{\Dy(\pic)}(\hxi)=\gly(\pic)\hxi$.
However, for $\pi\le\pic$, we have $\E\Dy(\Dy-2)\le0$ and then $\xi=1$
is the only solution in $(0,1]$ of $\gdy'(\xi)=\gly\xi$; hence $\hxi=1$.
This shows that $\xiv(\pi)\to1$ as $\pi\to\pic$, \ie, $\xiv$ is
continuous at $\pic$.
\end{proof}

\begin{remark}
Alternatively, 
the continuity of $\xiv$ in $(0,1)$ follows by
\refR{Rbp} and continuity of the extinction
  probability as the offspring distribution varies, 
\cf{} \cite[Lemma 4.1]{SJ199}. 
Furthermore, by the same arguments, the parameters are continuous also 
at $\pi=0$ and, except in the case when
$p_0+p_2=1$ (and thus $\Dy=1$ a.s.), at $\pi=1$ too.
\end{remark}

At the threshold $\pic$, we have linear growth of the size of the
giant component for (slightly) larger $\pi$, provided $\E D^3<\infty$,
and thus a jump discontinuity in the derivative of
$\xiv\x\vz\xv\x\dots$\,.
More precisely, the following holds.
We are here only interested in the case $0<\pic<1$, which is
equivalent to $0<\E D(D-2)<\infty$, see \refR{Rpic}.

\begin{theorem}
  \label{Tcrit}
Suppose that $0<\E D(D-2)<\infty$; thus $0<\pic<1$.
If further $\E D^3<\infty$, then as $\eps\downto0$,
\begin{align}
  \rhov(\pic+\eps)
&\sim
\frac{2\E D(D-1)}{\pic\E D(D-1)(D-2)} \eps
=
\frac{2\bigpar{\E D(D-1)}^2}{\E D\cdot\E D(D-1)(D-2)} \eps
\label{tca}
\\
  \vz\xv(\pic+\eps)
&\sim
  \ez\xv(\pic+\eps)
\sim
\pic\gl  \rhov(\pic+\eps)
\sim
\frac{2\E D\cdot \E D(D-1)}{\E D(D-1)(D-2)} \eps.
\label{tcb}
\end{align}
Similar results for $\rhoe$, $\vz\xe$, $\ez\xe$ follow by
\eqref{tgwrho}--\eqref{tgwe}. 
\end{theorem}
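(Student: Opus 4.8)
The strategy is to do a local (Taylor) analysis of the defining equation $\piqq g_D'\bigpar{1-\piqq\rhoe}=\gl(\piqq-\rhoe)$ near the critical point, and then transfer to $\rhov$ and to $\vz\xv,\ez\xv$ via the already-proven identities \eqref{tgwrho}--\eqref{tgwe}. Since we expand the survival-probability equation around $\rho=0$, the assumption $\E D^3<\infty$ is exactly what guarantees that $g_D'$ is three times differentiable at $1$ (equivalently $g_D''$ and $g_D'''$ extend continuously to $x=1$), so the Taylor expansion with an $o(\cdot)$ remainder is legitimate.

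First I would work with \eqref{tgvrho}, the equation for $\rhov$ in the all-$\pi_j=\pi$ site case, which is slightly cleaner: $\pi\bigpar{\gl-g_D'(1-\rhov)}=\gl\rhov$. Write $\rho=\rhov(\pi)$, set $r=1-\rho$, and expand $g_D'(1-\rho)=g_D'(1)-\rho g_D''(1)+\tfrac12\rho^2 g_D'''(1)+o(\rho^2)$ as $\rho\to0$, using $g_D'(1)=\gl$, $g_D''(1)=\E D(D-1)$, $g_D'''(1)=\E D(D-1)(D-2)$. Substituting gives $\pi\bigpar{\rho\,\E D(D-1)-\tfrac12\rho^2\E D(D-1)(D-2)+o(\rho^2)}=\gl\rho$. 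Dividing by $\rho>0$ (valid for $\pi>\pic$, where $\rho>0$ by \refT{TGv}) yields
\begin{equation*}
\pi\,\E D(D-1)-\gl=\tfrac12\pi\,\rho\,\E D(D-1)(D-2)+o(\rho).
\end{equation*}
With $\pi=\pic+\eps$ and $\pic=\E D/\E D(D-1)$ we have $\pi\,\E D(D-1)-\gl=\eps\,\E D(D-1)$, so $\rho\sim \dfrac{2\eps\,\E D(D-1)}{\pic\,\E D(D-1)(D-2)}$, which is \eqref{tca}; the second form follows from $\pic=\E D/\E D(D-1)$, i.e. $\pic\,\E D(D-1)=\E D=\gl$.

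For \eqref{tcb}, note $\ez\xv(\pic+\eps)=\pi\gl\rhov-\tfrac12\gl\rhov^2=\pic\gl\rhov+O(\eps\rhov)+O(\rho^2)=\pic\gl\rhov(1+o(1))$, since $\rho=O(\eps)$; and $\vz\xv=\pi(1-g_D(1-\rho))=\pi(\gl\rho+O(\rho^2))=\pic\gl\rho(1+o(1))$ by expanding $g_D(1-\rho)=1-\gl\rho+\tfrac12\rho^2\E D(D-1)+o(\rho^2)$ and using $g_D(1)=1$. Combining with \eqref{tca} and $\pic\gl=\gl^2/\E D(D-1)$, $\gl=\E D$, gives $\vz\xv(\pic+\eps)\sim\ez\xv(\pic+\eps)\sim\dfrac{2\,\E D\cdot\E D(D-1)}{\E D(D-1)(D-2)}\eps$, which is \eqref{tcb}. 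The statements for $\rhoe,\vz\xe,\ez\xe$ then follow immediately from $\rhov=\piqq\rhoe$, $\vz\xv=\pi\vz\xe$, $\ez\xv=\pi\ez\xe$ in \eqref{tgwrho}--\eqref{tgwe}, since near $\pic$ the factors $\piqq$ and $\pi$ are bounded away from $0$ and $\infty$ (indeed $\to\pic^{1/2}$, $\pic$), so only the constants change by a power of $\pic$.

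The only real subtlety — and the main thing to get right — is the justification that the expansion of $g_D'$ to second order with remainder $o(\rho^2)$ is valid when merely $\E D^3<\infty$: we do not have $g_D'''$ defined on a neighbourhood of $1$ in $\bbC$, only one-sided up to $x=1$. The clean way is to observe that $\E D^3<\infty$ implies $\sum_j j(j-1)(j-2)p_j<\infty$, hence $g_D''(x)=\sum_j j(j-1)p_j x^{j-2}$ has a derivative extending continuously to $[0,1]$ with $g_D'''(1^-)=\E D(D-1)(D-2)$; Taylor's theorem with integral (or Lagrange) remainder on the real interval $[1-\rho,1]$ then gives precisely $g_D'(1-\rho)=\gl-\rho g_D''(1)+\tfrac12\rho^2 g_D'''(1)+o(\rho^2)$, using continuity of $g_D'''$ at $1$ to control the remainder. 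Everything else is routine bookkeeping with the identities already established in the section.
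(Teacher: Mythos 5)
Your proposal is correct and follows essentially the same route as the paper: Taylor-expanding $g_D'(1-\rhov)$ to second order at $1$ (justified by $\E D^3<\infty$) in the equation \eqref{tgvrho}, using $\pic\,g_D''(1)=\gl$ to isolate $\eps$, and then obtaining \eqref{tcb} and the bond-percolation statements from \eqref{tgwrho}--\eqref{tgwe}. The only points worth making explicit are that $\rhov(\pic+\eps)\downto0$ (from the continuity in \refT{Trho}) so the expansion applies, and that $g_D'''(1)=\E D(D-1)(D-2)>0$ because $\E D(D-2)>0$ forces mass on $\{D\ge3\}$, so the division in the final asymptotic is legitimate.
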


\begin{proof}
For $\pi=\pic+\eps\downto\pic$, by 
$g_D''(1)=\E D(D-1)=\gl/\pic$, see \eqref{cgv}, and \eqref{tgvrho},
\begin{equation}\label{cam}
  \eps g_D''(1)\rhov =(\pi-\pic)g_D''(1)\rhov
=\pi g_D''(1)\rhov-\gl\rhov = \pi\bigpar{g_D''(1)\rhov-\gl+g_D'(1-\rhov)}.
\end{equation}
Since $\E D^3<\infty$, $g_D$ is three times continuously
differentiable on $\oi$, and a Taylor expansion yields
$
g_D'(1-\rhov)
=
\gl-\rhov g_D''(1)+\rhov^2 g_D'''(1)/2+o(\rhov^2)
$.
Hence, \eqref{cam} yields, since $\rhov>0$,
\begin{equation*}
  \eps g_D''(1)
= \pi \rhov g_D'''(1)/2+o(\rhov)
= \pic \rhov g_D'''(1)/2+o(\rhov).
\end{equation*}
Thus,
noting that $g_D''(1)=\E D(D-1)$ and $g_D'''(1)=\E D (D-1)(D-2)>0$
(since $\E D(D-2)>0$),
\begin{equation*}
\rhov
\sim
\frac{2 g_D''(1)}{\pic g_D'''(1)}\eps
=\frac{2 \E D(D-1)}{\pic \E D(D-1)(D-2)}\eps,
\end{equation*}
which yields \eqref{tca}.
Finally, \eqref{tcb} follows easily by \eqref{tgwv} and \eqref{tgwe}.
\end{proof}

If $\E D^3=\infty$, we find in the same way a slower growth of
$\rhov(\pi)$, $\vz\xv(\pi)$, $\ez\xv(\pi)$ at $\pic$.
As an example, we consider $D$ with a power law tail, 
$p_k\sim ck^{-\gamma}$, 
where we take $3<\gamma<4$ so that $\E D^2<\infty$ but
$\E D^3=\infty$.

\begin{theorem}
  \label{Tcritpow}
Suppose that $p_k\sim ck^{-\gamma}$ as $k\to\infty$, where
$3<\gamma<4$ and $c>0$. Assume further that $\E D(D-2)>0$. Then
$\pic\in(0,1)$ and, as $\eps\downto0$,
\begin{align*}
  \rhov(\pic+\eps)
&\sim
\lrpar{\frac{\E D(D-1)}{c\pic\Gamma(2-\gamma)}}^{1/(\gamma-3)}
 \eps^{1/(\gamma-3)},
\\
  \vz\xv(\pic+\eps)
&\sim
  \ez\xv(\pic+\eps)
\sim
\pic\gl  \rhov(\pic+\eps)
\\&
\sim 
\pic\gl
\lrpar{\frac{\E D(D-1)}{c\pic\Gamma(2-\gamma)}}^{1/(\gamma-3)}
 \eps^{1/(\gamma-3)}.
\end{align*}
Similar results for $\rhoe$, $\vz\xe$, $\ez\xe$ follow by
\eqref{tgwrho}--\eqref{tgwe}. 
\end{theorem}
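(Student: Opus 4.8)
The plan is to mimic the proof of Theorem~\ref{Tcrit}, only replacing the Taylor expansion of $g_D$ near $1$ by the appropriate singular (non-polynomial) expansion that a power-law tail $p_k\sim ck^{-\gamma}$ forces. First I would record that $3<\gamma<4$ gives $\E D^2<\infty$ (so \refCond{C1} is not an issue) while $\E D^3=\infty$; together with the hypothesis $\E D(D-2)>0$ this places us in the regime $0<\pic<1$ by \refR{Rpic}, so all the formulas of \refT{TGv} and in particular the implicit equation \eqref{tgvrho}, namely $\pi\bigpar{\gl-g_D'(1-\rhov)}=\gl\rhov$, are available and $\rhov(\pic+\eps)\downto0$ as $\eps\downto0$ by \refT{Trho}.

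The heart of the matter is the local behaviour of $g_D'$ at $x=1$. Writing $\rho=1-x\downto0$, I would show
\begin{equation*}
  g_D'(1-\rho)=g_D'(1)-g_D''(1)\rho + a\,\rho^{\gamma-2}+o\bigpar{\rho^{\gamma-2}},
\end{equation*}
for a positive constant $a$ determined by $c$ and $\gamma$; the point is that since $\E D^2<\infty$ both $g_D'(1)=\gl$ and $g_D''(1)=\E D(D-1)$ are finite, but the next term is not the $\E D(D-1)(D-2)$ term (which is $+\infty$) — instead the tail $p_k\sim ck^{-\gamma}$ produces a fractional power $\rho^{\gamma-2}$. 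Concretely, $g_D''(1-\rho)=\sum_k k(k-1)p_k(1-\rho)^{k-2}$; splitting off a polynomial part and using $p_k\sim ck^{-\gamma}$, the remainder is asymptotically $c\sum_k k^{2-\gamma}(1-\rho)^{k-2}$, and by a standard singularity/Tauberian estimate (e.g.\ comparing $\sum_k k^{2-\gamma}x^k$ with $\int_0^\infty t^{2-\gamma}x^t\ddx t=\Gamma(3-\gamma)(-\log x)^{\gamma-3}\sim\Gamma(3-\gamma)\rho^{\gamma-3}$) one gets $g_D''(1)-g_D''(1-\rho)\sim c\,\Gamma(3-\gamma)\,\rho^{\gamma-3}$ — note $\Gamma(3-\gamma)<0$ for $3<\gamma<4$. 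Integrating once in $\rho$ and matching with the stated constant $\Gamma(2-\gamma)$ (using $\Gamma(3-\gamma)=(2-\gamma)\Gamma(2-\gamma)$) yields the displayed expansion of $g_D'(1-\rho)$ with $a=-c\,(2-\gamma)\Gamma(2-\gamma)/(\gamma-2)\cdot(\text{sign bookkeeping})$; I would be careful to track signs so that the final coefficient comes out positive, consistent with $\rhov>0$.

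Substituting this expansion into \eqref{tgvrho} with $\pi=\pic+\eps$ and using $g_D''(1)=\gl/\pic$ from \eqref{cgv}, exactly as in \eqref{cam}, the $g_D''(1)\rho$ terms cancel and one is left, to leading order, with $\eps\, g_D''(1)\,\rhov\sim \pic\,a\,\rhov^{\gamma-2}$, i.e.\ $\rhov^{\gamma-3}\sim \eps\, g_D''(1)/(\pic a)$; solving gives $\rhov(\pic+\eps)\sim\bigpar{\E D(D-1)/(c\pic\Gamma(2-\gamma))}^{1/(\gamma-3)}\eps^{1/(\gamma-3)}$ after substituting $a$ and simplifying the Gamma factors, which is the first claimed asymptotic. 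The second line then follows immediately from \eqref{tgwv}--\eqref{tgwe} (equivalently \eqref{tgvv}--\eqref{tgve}) exactly as the analogous deduction \eqref{tcb} in the proof of \refT{Tcrit}: $\vz\xv(\pic+\eps)\sim\ez\xv(\pic+\eps)\sim\pic\gl\,\rhov(\pic+\eps)$, since $1-g_D(1-\rho)=\gl\rho+O(\rho^2)$ and the lower-order corrections are negligible. Finally the statement for $\rhoe,\vz\xe,\ez\xe$ is read off from \eqref{tgwrho}--\eqref{tgwe}, which only introduce the constant factor $\piqq=\sqrt{\pic+\eps}\to\sqrt{\pic}$ and so change the leading constants but not the power $\eps^{1/(\gamma-3)}$.

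The main obstacle is the singularity analysis in the second paragraph: making the passage from $p_k\sim ck^{-\gamma}$ to the $\rho^{\gamma-3}$ asymptotics of $g_D''(1)-g_D''(1-\rho)$ rigorous, and in particular pinning down the constant $c\,\Gamma(3-\gamma)$ with the correct sign, together with the bookkeeping that turns it into the $\Gamma(2-\gamma)$ appearing in the statement. This is where one must be most careful — the rest is a routine substitution into \eqref{tgvrho} and an appeal to the already-proven consequences of \refT{TGv}.
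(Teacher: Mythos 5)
Your overall strategy coincides with the paper's: extract from the tail $p_k\sim ck^{-\gamma}$ a singular expansion of $g_D'$ near $1$ with a fractional term of order $\rhov^{\gamma-2}$, substitute into \eqref{cam} (equivalently \eqref{tgvrho}) so that the $g_D''(1)\rhov$ terms cancel and $\eps\,g_D''(1)\rhov\sim \pic\, c\,\Gamma(2-\gamma)\rhov^{\gamma-2}$, solve for $\rhov$, and then obtain $\vz\xv,\ez\xv$ from \eqref{tgwv}--\eqref{tgwe} and the bond versions from \eqref{tgwrho}--\eqref{tgwe}. The final asymptotics you state are the correct ones, and the reductions before and after the singularity analysis are exactly as in the proof of \refT{Tcrit}.

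However, the step you yourself single out as the crux is not correct as written. First, the comparison of $\sum_k k^{2-\gamma}x^k$ with $\int_0^\infty t^{2-\gamma}x^t\ddx t$ is vacuous: since $2-\gamma<-1$ that integral diverges at $t=0$, and indeed the sum itself converges at $x=1$ (this is just $\E D(D-1)<\infty$); the singular behaviour sits in the \emph{difference} $g_D''(1)-g_D''(1-\rho)$, which your comparison does not capture. Second, the constant you extract has the wrong sign: $\Gamma(3-\gamma)<0$ for $3<\gamma<4$, whereas $g_D''(1)-g_D''(1-t)>0$; the correct statement is $g_D''(1)-g_D''(1-t)\sim c\,|\Gamma(3-\gamma)|\,t^{\gamma-3}$, and with your stated constant the substitution into \eqref{cam} would produce a negative right-hand side rather than the claimed answer. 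You do flag the ``sign bookkeeping'', but the clean way to settle it — and this is what the paper does — is to go one derivative higher: $g_D'''(1-t)$ is a series that genuinely diverges as $t\downto0$ (since $3-\gamma\in(-1,0)$), and a direct comparison with the binomial expansion of $t^{\gamma-4}=\bigpar{1-(1-t)}^{\gamma-4}$ gives $g_D'''(1-t)\sim c\,\Gamma(4-\gamma)\,t^{\gamma-4}$ with a manifestly positive constant; integrating twice then yields $\rhov g_D''(1)-\bigpar{\gl-g_D'(1-\rhov)}\sim c\,\Gamma(2-\gamma)\rhov^{\gamma-2}$ with the correct positive $\Gamma(2-\gamma)$, after which the rest of your argument goes through verbatim.
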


\begin{proof}
  We have, for example by comparison with the Taylor expansion of
  $\bigpar{1-(1-t)}^{\gamma-4}$,
  \begin{equation*}
g_D'''(1-t)=\sum_{k=3}^\infty k(k-1)(k-2)p_k(1-t)^k
\sim c\Gamma(4-\gamma)t^{\gamma-4},
\qquad t\downto0,	
  \end{equation*}
and thus by integration
  \begin{equation*}
g_D''(1)-g_D''(1-t)
\sim c\Gamma(4-\gamma)(\gamma-3)\qw t^{\gamma-3}
=
c|\Gamma(3-\gamma)|t^{\gamma-3},
  \end{equation*}
and, integrating once more,
  \begin{equation*}
\rhov g_D''(1)-(\gl-g_D'(1-\rhov))
\sim c\Gamma(2-\gamma) \rhov^{\gamma-2}.
  \end{equation*}
Hence, \eqref{cam} yields
  \begin{equation*}
\eps g_D''(1)\rhov
\sim c\pic\Gamma(2-\gamma) \rhov^{\gamma-2}
  \end{equation*}
and the results follow, again using \eqref{tgwv} and \eqref{tgwe}.
\end{proof}

\section{$k$-core}\label{Score}
Let $k \ge 2$ be a fixed integer. The
$k$-core of a graph $G$,
denoted by $\crk(G)$, 
is the largest induced
subgraph of $G$ with minimum vertex degree at least $k$. (Note that
the $k$-core may be empty.)
The question whether a non-empty $k$-core exists in a random graph has
attracted a lot of attention for various models of random graphs since
the pioneering papers by \citet{Bollobas84},
\citet{Luczak91} and 
\citet{PSW} for $\gnp$
and \gnm;
in particular, the case of $\gnd$ and $\gndx$ with given degree
sequences have been studied by several authors, see \citet{SJ184,SJ196} and
the references given there.

We study the percolated $\gndx\xpi$ by the exposion method presented
in \refS{Sintro}. For the $k$-core, the cleaning up stage is trivial:
by definition, the $k$-core of $\gndxy$ does not contain any vertices
of degree 1, so it is unaffected by the removal of all red vertices,
and thus 
\begin{equation}
  \label{crk=}
\crk\bigpar{\gndx\xpi}=\crk\bigpar{\gndxy}.
\end{equation}

Let, for $0\le p\le1$, $D_p$ be the thinning of $D$ obtained by
taking $D$ points and then randomly and independently keeping each of them with
probability $p$. 
Thus, given $D=d$, $D_p\sim\Bi(d,p)$.
Define, recalling the notation \eqref{bi},
\begin{align}
  h(p)&\= 
\E\bigpar{D_p\ett{D_p\ge k}}
=
\sum_{j=k}^\infty \sum_{l=j}^\infty j p_l \bljp,
\label{h}
\\
  h_1(p)&\= \P(D_p\ge k)
=\sum_{j=k}^\infty \sum_{l=j}^\infty  p_l \bljp.
\label{h1}
\end{align}
Note that 
$D_p$ is stochastically increasing in $p$, and thus
both $h$ and $h_1$ are increasing in $p$, with
$h(0)=h_1(0)=0$. Note further that $h(1)=\sum_{j=k}^\infty j p_j \le\gl$
and $h_1(1)=\sum_{j=k}^\infty  p_j \le1$, with strict inequalities
unless $p_j=0$ for all $j=1,\dots,k-1$ or $j=0,1,\dots,k-1$, respectively.
Moreover, 
\begin{equation}\label{h2}
  \begin{split}
  h(p)
&=\E D_p-\E\bigpar{D_p\ett{D_p\le k-1}}
=\E D_p-\sum_{j=1}^{k-1}j\P\xpar{D_p=j}
\\&
=\gl p-\sum_{j=1}^{k-1}\sum_{l\ge j}j p_l\binom lj p^ j(1-p)^{l-j}
\\&
=\gl p-\sum_{j=1}^{k-1}\frac{p^j}{(j-1)!}g_D^{(j)}(1-p).
  \end{split}
\end{equation}
Since $g_D(z)$ is an analytic function in \set{z:|z|<1}, \eqref{h2}
shows that $h(p)$ 
is an analytic function in the domain \set{p:|p-1|<1} in the complex
plane; in particular, $h$ is analytic on $\ooi$. 
(But not necessarily at 0, as seen by \refE{Einfty}.)
Similarly, $h_1$ is analytic on $\ooi$.

We will use the following result by
\citet[Theorem 2.3]{SJ184}. 
\begin{proposition}[\cite{SJ184}]
  \label{PTK}
Suppose that  \refCond{C1} holds.
Let $k\ge2$ be fixed, and let $\crkx$ be the \kcore{} of \gndx.
Let $\pmax\=\max\set{p\in\oi:h(p)=\gl p^2}$. 
  \begin{romenumerate}
\item
If $h(p)<\gl p^2$ for all $p\in(0,1]$, 
which is equivalent to $\pmax=0$,
then 
$\crkx$ has $\opn$ vertices and $\opn$  
edges. Furthermore, 
if also $k\ge3$ and $\sumin e^{\ga d_i} =O(n)$ for
some $\ga>0$, then \crkx{} is
empty \whp.
\item
If $h(p)\ge \gl p^2$ for some $p\in(0,1]$, 
which is equivalent to $\pmax\in(0,1]$,
and further $\pmax$ is not a local maximum point of $h(p)-\gl p^2$, 
then
\begin{align}
  v(\crkx)/n&\pto h_1(\pmax)>0,
\label{ptkv}
\\
  v_j(\crkx)/n&\pto \P(\Dpmax=j)=\sumlj p_l\bxxx lj{\pmax},
\qquad j\ge k,
\label{ptkvj}
\\
e(\crkx)/n&\pto h(\pmax)/2=\gl \pmax^2/2.
\label{ptke}
\end{align}
  \end{romenumerate}
\end{proposition}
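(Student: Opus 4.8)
Since this is \cite[Theorem 2.3]{SJ184}, I only describe the plan one would follow. The starting point is the peeling characterisation of the \kcore: one obtains $\crkx$ from $\gndx$ by repeatedly deleting a vertex whose current degree lies in $\{1,\dots,k-1\}$, together with its incident edges, until none remains. The order of deletions is irrelevant, so I would run this on the configuration model in a convenient random order: while a ``light'' vertex (current degree $1,\dots,k-1$) exists, pick uniformly at random one of its half-edges and delete it together with the half-edge paired to it. The point of working with the configuration model is the principle of deferred decisions: conditionally on the multiset of currently present degrees, the pairing of the surviving half-edges is still uniformly random, so the partner just deleted is uniform among all surviving half-edges. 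Consequently the vector $(Q_j(t))_{j\ge0}$ counting present vertices of current degree $j$ after $t$ deletion steps (with $Q_0$ the vertices that have become isolated) is a Markov chain whose one-step transition probabilities are explicit functions of the $Q_j$'s and of $N(t):=\sum_j jQ_j(t)$, the number of surviving half-edges.

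Next I would show that, after dividing by $n$ and reparametrising by a continuous ``time'', the trajectory $(Q_j(\cdot)/n)$ stays \whp{} uniformly close to a deterministic trajectory. The ``simple'' route --- the one taken in \cite{SJ184}, in place of Wormald's differential-equation method --- is to note that each $Q_j$ changes by $O(1)$ per step while its conditional one-step increment is a smooth function of the rescaled state, so an elementary Doob (Azuma--Hoeffding) martingale argument gives concentration on bounded intervals of the rescaled time. The one genuine technical complication is that the degree can be arbitrarily large: one truncates at a degree $M$, bounds the total effect of vertices of degree $>M$ uniformly in $n$ using the uniform summability of $\sum_j jn_j/n$ (which, given \refCond{C1}, is equivalent to \eqref{njsum}), and then lets $M\to\infty$. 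The hypotheses $n_j/n\to p_j$ feed the initial condition of the limiting trajectory.

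Solving the limiting equations --- equivalently, eliminating the time parameter, which one may also recover from the branching-process self-consistency behind \refR{Rbp} --- leads to the scalar fixed-point relation $h(p)=\gl p^2$. The peeling, started from ``$p=1$'', follows the trajectory with $p$ decreasing and halts exactly when no light vertex remains, which in the limit is the first value of $p$ at which $h(p)=\gl p^2$, i.e.\ $p=\pmax$; if $h(p)<\gl p^2$ on all of $\ooi$ there is no such $p$, the peeling removes all but $\opn$ vertices, and this is part~(i) with its $o_p(n)$ bounds. The hypothesis that $\pmax$ is not a local maximum point of $h(p)-\gl p^2$ excludes the degenerate critical case in which that function touches $0$ tangentially from below at $\pmax$, where the \kcore{} size is discontinuous and not determined by the limit; away from it, reading off the limiting state at $p=\pmax$ gives $v(\crkx)/n\pto h_1(\pmax)$, $v_j(\crkx)/n\pto\P(\Dpmax=j)$ for $j\ge k$, and $e(\crkx)/n\pto h(\pmax)/2=\gl\pmax^2/2$, which are \eqref{ptkv}--\eqref{ptke}.

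For the last assertion of part~(i) --- that when $k\ge3$ and $\sumin e^{\ga d_i}=O(n)$ the \kcore{} is actually \emph{empty} \whp{} --- the $\opn$ bound does not suffice, and I would add a direct first-moment estimate: bound the expected number of nonempty induced subgraphs of $\gndx$ with minimum degree $\ge k$ on $s$ vertices and sum over $2\le s\le n$; the exponential-moment hypothesis controls the number of admissible choices of half-edges, and for $k\ge3$ the sum is $o(1)$ (the familiar statement that small dense subgraphs do not occur). The main obstacle in the whole argument is the concentration step: making the law of large numbers for the entire degree profile rigorous despite the infinite state space, and then analysing the limiting curve carefully enough near $\pmax$ to justify that the peeling stops precisely there rather than overshooting.
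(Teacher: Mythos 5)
You should note first that the paper contains no proof of Proposition~\ref{PTK} to compare against: it is quoted from \cite{SJ184} (with \eqref{ptkvj} noted, via \cite{SJ196}, to follow from the proof there), and the only in-paper material touching it is the Appendix, which merely identifies $\pmax$ and $h_1(\pmax)$ with branching-process quantities. Measured against the actual proof in \cite{SJ184}, your plan is a viable but genuinely different route, and your attribution of it to \cite{SJ184} is not accurate. You propose the classical peeling analysis: track the full degree profile $(Q_j(t))_{j\ge0}$ of the deletion Markov chain, prove concentration of the rescaled trajectory by Doob/Azuma estimates with a truncation at degree $M$ (using the uniform summability from \refCond{C1}), and read off the stopping point from the limiting curve; this is essentially the Wormald-style differential-equation route, close in spirit to \cite{CW}. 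What makes the solution in \cite{SJ184} ``simple'' is precisely that it avoids tracking the profile: since the partner of each deleted half-edge is uniform among the surviving half-edges, the peeling can be coupled with removing half-edges in an exchangeable random order (realized there as a rate-one death process with i.i.d.\ exponential clocks), so that at the relevant random time the surviving degree of a vertex of original degree $l$ is exactly $\Bi(l,p)$; the functions $h(p)$, $h_1(p)$ and the equation $h(p)=\gl p^2$ then appear directly, and the stochastic input reduces to a one-dimensional law of large numbers for the number of light half-edges and the stopping time, together with the one-sided analysis near $\pmax$ where the hypothesis that $\pmax$ is not a local maximum of $h(p)-\gl p^2$ is used. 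Your approach buys flexibility (it generalizes to other peeling rules), but the concentration step you describe as elementary is exactly the technical burden the death-process trick is designed to avoid, and the most delicate point in your plan --- justifying that the peeling stops at $\pmax$ rather than stalling or overshooting when the number of light vertices is $\opn$, which is where the non-local-maximum condition does its work --- needs more than ``reading off the limiting state''. Your supplementary first-moment argument for emptiness in part (i) when $k\ge3$ under the exponential-moment hypothesis is indeed the right kind of addition and matches what is done in \cite{SJ184}.
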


\begin{remark}
The result \eqref{ptkvj} is not stated explicitly in \cite{SJ184}, but
as remarked in \cite[Remark 1.8]{SJ196}, it follows immediately from the
proof in \cite{SJ184} of \eqref{ptkv}. (Cf.\ \cite{CW} for the random
graph $G(n,m)$.) 
\end{remark}

\begin{remark}
  The extra condition in (ii)
that $\pmax$ is not a local maximum point of
 $h(p)-\gl p^ 2$ is actually stated somewhat differently in
 \cite{SJ184}, \viz{} as $\gl p^2<h(p)$ in some interval
 $(\pmax-\eps,\pmax)$.
However, since $g(p)\=h(p)-\gl p^2$ is analytic at $\pmax$, a Taylor
expansion at $\pmax$ shows that for some such interval
 $(\pmax-\eps,\pmax)$, either $g(p)>0$ or $g(p)\le0$ throughout the
interval. Since $g(\pmax)=0$ and $g(p)<0$ for $\pmax<p\le1$, the two
versions of the condition are equivalent.

The need for this condition is perhaps more clearly seen in the
percolation setting, \cf{} \refR{Rjumps}.
\end{remark}

There is a natural interpretation of this result in terms of the
branching process approximation of the local exploration process,
similar to the one described for the giant component in \refR{Rbp}. For
the \kcore, this was observed already by \citet{PSW}, but (unlike for the
giant component), the branching process approximation has so far
mainly been used heuristically; the technical difficulties to make a
rigorous proof based on it are formidable, and have so far been
overcome only by \citet{Riordan:kcore} for a related random graph
model. We, as most others, avoid this complicated method of proof, and
only identify the limits in \refP{PTK} (which is proved by other, simpler,
methods in \cite{SJ184}) with quantities for the branching process.
Although this idea is not new, we have, for the random graphs that we
consider,
not seen a detailed proof of it in the literature, so
for completeness we provide one in \refApp{App}.

\begin{remark}\label{Rk=2}
If $k=2$, then \eqref{h2} yields
\begin{equation}\label{hk=2}
  \begin{split}
  h(p)
&
=\gl p-\suml p_llp(1-p)^{l-1}
=\gl p-pg_D'(1-p)	
  \end{split}
\end{equation}
and thus
\begin{equation*}
h(p)-\gl p^2=p\bigpar{\gl(1-p)-g_D'(1-p)}.
\end{equation*}
It follows that $\pmax=1-\xi$, where $\xi$ is as in \refP{PT1} and
\refR{RPT1ii}; \ie, by \refR{Rbp}, 
$\pmax=\rho$, the survival probability of the
branching process $\cX$ with offspring distribution $\D$. 
(See \refApp{App} for further explanations of this.)
\end{remark}

We now easily derive results for the \kcore{} in the percolation models.
For simplicity, we consider for site percolation only the case when
all $\pi_k$ are equal; the general case is similar but the explicit
formulas are less nice.

\begin{theorem}
    \label{TKv}
Consider the site percolation model $\gndx\vpi$
with $0\le\pi\le1$, and
suppose that \refCond{C1} holds.
Let $k\ge2$ be fixed, and let 
$\crkx$ be the \kcore{} of $\gndx\vpi$.
Let
\begin{equation}\label{pick}
  \pic=\pick
\=
\inf_{0<p\le1}\frac{\gl p^2}{h(p)} 
= \lrpar{\sup_{0<p\le1}\frac{h(p)}{\gl p^2}}\qw.
\end{equation}
  \begin{romenumerate}
\item
If $\pi<\pic$, then
$\crkx$ has $\opn$ vertices and $\opn$  
edges. 
Furthermore, 
if also $k\ge3$ and  $\sumin e^{\ga d_i} =O(n)$ for
some $\ga>0$, then \crkx{} is
empty \whp.
\item
If $\pi>\pic$, then \whp{} \crkx{} is non-empty.
Furthermore, if $\pmax=\pmax(\pi)$ is the largest $p\le1$ such that
$h(p)/(\gl p^2)=\pi\qw$, 
and $\pmax$ is not a local maximum point of $h(p)/(\gl p^2)$ in $\ooi$, 
then
\begin{align*}
v(\crkx)/n&\pto \pi h_1(\pmax)>0,
\\
v_j(\crkx)/n&\pto \pi \P(\Dpmax=j),\qquad j\ge k,
\\
e(\crkx)/n&\pto \pi h(\pmax)/2=\gl \pmax^2/2.
\end{align*}
\end{romenumerate}
\end{theorem}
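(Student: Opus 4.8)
The plan is to reduce everything to \refP{PTK} for the intermediate graph $\gndxy$, exactly as in \refS{Sgiant}, but now the clean-up step costs nothing: by \eqref{crk=} we have $\crkx\bigpar{\gndx\vpi}=\crkx\bigpar{\gndxy}$ as graphs, so the ``red'' vertices of degree $1$ never enter the discussion. Recall from \refS{Sdegrees} that, after passing to the Skorohod coupling, $\ddy$ \as{} satisfies \refCond{C1} with limiting degree variable $\Dy$, that $\ny/n\pto\zeta=\pi+(1-\pi)\gl$ by \eqref{zetav}, and that $\gly=\zeta\qw\gl$ by \eqref{gly}. So I would condition on $\ddy$, apply \refP{PTK} to $\gndxy$, and then translate the conclusions back by multiplying through by $\ny/n\pto\zeta$ and invoking \eqref{crk=}.

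The one genuine computation is the behaviour of the functions $h,h_1$ of \eqref{h}--\eqref{h1} under the explosion. Writing $\hy,\hy_1$ for the corresponding functions built from $\Dy$ in place of $\Doo$, observe that every term of the double sums in \eqref{h}--\eqref{h1} has $l\ge j\ge k\ge2$, so only the limiting probabilities $\py_l$ with $l\ge2$ occur; by \eqref{nyjlimv} with all $\pi_j=\pi$ these equal $\zeta\qw\pi p_l$, and hence $\hy(p)=\zeta\qw\pi\,h(p)$ and $\hy_1(p)=\zeta\qw\pi\,h_1(p)$ for $p\in\ooi$ (the point being that vertices of degree $1$ are invisible to $h$ and $h_1$ once $k\ge2$). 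Together with $\gly=\zeta\qw\gl$, the equation $\hy(p)=\gly p^2$ of \refP{PTK} reads, for $p>0$, simply $h(p)/(\gl p^2)=\pi\qw$; so its largest root in $\oi$ is the $\pmax=\pmax(\pi)$ of the theorem, the case $\hy(p)<\gly p^2$ for all $p\in\ooi$ occurs when $\pi<\inf_{0<p\le1}\gl p^2/h(p)=\pick$, and the case $\hy(p)>\gly p^2$ for some $p\in\ooi$ occurs when $\pi>\pick$. For the side condition of \refP{PTK}(ii), I would appeal to the analyticity argument in the remark after that proposition: near $\pmax$ we have $\hy(p)-\gly p^2=\zeta\qw\gl\pi\,p^2\bigpar{h(p)/(\gl p^2)-\pi\qw}$, which differs from $h(p)/(\gl p^2)-\pi\qw$ only by the positive analytic factor $\zeta\qw\gl\pi\,p^2$, so it has a local maximum at $\pmax$ if and only if $h(p)/(\gl p^2)$ does; thus the hypothesis of \refT{TKv}(ii) is precisely the hypothesis of \refP{PTK}(ii) applied to $\gndxy$.

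It then remains to record the conclusions. In case (ii), \refP{PTK} gives $v(\crkx(\gndxy))/\ny\pto\hy_1(\pmax)=\zeta\qw\pi h_1(\pmax)>0$, and $v_j(\crkx(\gndxy))/\ny\pto\P(\Dypmax=j)=\sum_{l\ge j}\py_l\bxxx lj{\pmax}=\zeta\qw\pi\,\P(\Dpmax=j)$ for $j\ge k$ (again only $l\ge2$ contributes to that sum), and $e(\crkx(\gndxy))/\ny\pto\hy(\pmax)/2=\zeta\qw\gl\pmax^2/2$; multiplying by $\ny/n\pto\zeta$ and using \eqref{crk=} yields the three displayed limits of the theorem, and positivity of the first shows that the \kcore{} is non-empty \whp. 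In case (i), \refP{PTK}(i) shows that $\crkx(\gndxy)$, which equals $\crkx\bigpar{\gndx\vpi}$ by \eqref{crk=}, has $\op{\ny}=\opn$ vertices and $\opn$ edges; and if moreover $k\ge3$ and $\sumin e^{\ga d_i}=O(n)$, then, since the explosion adds only $\nr\le\sumin d_i=O(n)$ vertices of degree $1$, one still has $\sum_{i=1}^{\ny} e^{\ga\dy_i}=O(n)=O(\ny)$, so the last clause of \refP{PTK}(i) applies to $\gndxy$ and the \kcore{} is empty \whp. I do not foresee a real obstacle: the only piece of content is the pair of identities $\hy=\zeta\qw\pi h$, $\hy_1=\zeta\qw\pi h_1$ (valid precisely because $k\ge2$), and the only step needing a little care is the matching of the ``not a local maximum point'' conditions, carried out above.
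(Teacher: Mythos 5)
Your reduction is, in all essentials, the paper's own proof: use \eqref{crk=} to dispose of the red vertices, apply \refP{PTK} to the exploded graph, compute $\hy(p)=\zeta\qw\pi h(p)$ and $\hy_1(p)=\zeta\qw\pi h_1(p)$ (valid precisely because $k\ge2$), rewrite $\hy(p)\ge\gly p^2$ as $\pi h(p)\ge\gl p^2$ via \eqref{gly}, match the local-maximum conditions, and rescale by $\ny/n\pto\zeta$; the treatment of the exponential-moment hypothesis in (i) is also the paper's.

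There is, however, one genuine gap. The first assertion of part (ii) --- that $\crkx$ is non-empty \whp{} for \emph{every} $\pi>\pic$ --- is unconditional, whereas you obtain non-emptiness only as a by-product of the limit $v(\crkx)/n\pto\pi h_1(\pmax)>0$, which you prove under the extra hypothesis that $\pmax(\pi)$ is not a \lmp{} of $h(p)/(\gl p^2)$. The exceptional values of $\pi$ (the local maximum values of $h(p)/(\gl p^2)$, i.e.\ exactly the phase-transition points of \refR{RTK}) can certainly occur, and for them your argument gives nothing. The paper closes this as follows: by \refR{RTK} the exceptional values are at most countable, so one can pick a non-exceptional $\pi'$ with $\pic<\pi'<\pi$; by the case already proved, $\gndx\vpiw$ has a non-empty \kcore{} \whp, and since one may couple the two site percolations so that $\gndx\vpi\supseteq\gndx\vpiw$, monotonicity of the \kcore{} under subgraph inclusion gives non-emptiness for $\pi$ as well. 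You should add this step. Two minor glosses worth a line each: when identifying the $\pmax$ of \refP{PTK} with ``the largest $p\le1$ such that $h(p)/(\gl p^2)=\pi\qw$'', note (as the paper does) that equality is attained at $\pmax$ because $\pi h(1)\le h(1)\le\gl$ and $h$ is continuous; and the degenerate case $\P(D\ge k)=0$, where $h\equiv0$ and the ratio $\gl p^2/h(p)$ is not literally defined, is best set aside separately as in the paper (there $\crk(\gndx\vpi)\subseteq\crk(\gndx)$ and \refP{PTK}(i) applied to $\gndx$ already gives (i)).
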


\begin{theorem}
    \label{TKe}
Consider the bond percolation model $\gndx\epi$ with $0\le\pi\le1$, and
suppose that \refCond{C1} holds.
Let $k\ge2$ be fixed, and let 
$\crkx$ be the \kcore{} of $\gndx\epi$.
Let $  \pic=\pick$ be given by \eqref{pick}.
  \begin{romenumerate}
\item
If $\pi<\pic$, then
$\crkx$ has $\opn$ vertices and $\opn$  
edges.
Furthermore, 
if also $k\ge3$ and  $\sumin e^{\ga d_i} =O(n)$ for
some $\ga>0$, then \crkx{} is
empty \whp.
\item
If $\pi>\pic$, then \whp{} \crkx{} is non-empty.
Furthermore, if $\pmax=\pmax(\pi)$ is the largest $p\le1$ such that
$h(p)/(\gl p^2)=\pi\qw$, 
and $\pmax$ is not a local maximum point of $h(p)/(\gl p^2)$ in $\ooi$, 
then
\begin{align*}
v(\crkx)/n&\pto  h_1(\pmax)>0,
\\
v_j(\crkx)/n&\pto  \P(\Dpmax=j),\qquad j\ge k,
\\
e(\crkx)/n&\pto  h(\pmax)/2=\gl \pmax^2/(2\pi).
\end{align*}
\end{romenumerate}
\end{theorem}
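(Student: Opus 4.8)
The plan is to apply \refP{PTK} to the intermediate ``exploded'' graph $\gndxy$ produced by the bond-percolation construction of \refS{Sintro}, and then to read the conclusions back off via the identity $\crk\bigpar{\gndx\epi}=\crk\bigpar{\gndxy}$ from \eqref{crk=}. By \refS{Sdegrees} we may assume (after the Skorohod coupling) that $\gndxy$ \as{} satisfies \refCond{C1}, with limiting degree \pgf{} $\gdp$ given by \eqref{gdye} and limiting mean $\gly=\zeta\qw\gl$ by \eqref{gly}, where $\zeta=1+(1-\piqq)\gl$. So the first task is to express the analogues of $h$ and $h_1$ for the distribution of $\Dy$ in terms of the original $h$ and $h_1$.

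The key observation is that all $\nr$ ``red'' vertices have degree $1$, so (since $k\ge2$) they contribute nothing to $\E(\Dy_p\ett{\Dy_p\ge k})$ or to $\P(\Dy_p\ge k)$; among the remaining vertices the limiting degree distribution is the $\piqq$-thinning of $\Doo$, and since thinnings compose a further $p$-thinning turns it into $\Doo_{\piqq p}$. Reading this off \eqref{gdye} gives, for $0<p\le1$,
\begin{equation*}
  \hy(p)=\zeta\qw h\bigpar{\piqq p},
  \qquad
  \P(\Dy_p\ge k)=\zeta\qw h_1\bigpar{\piqq p},
\end{equation*}
so that $\hy(p)-\gly p^2=\zeta\qw\bigpar{h(\piqq p)-\gl p^2}$, and under $q=\piqq p$ the equation $\hy(p)=\gly p^2$ becomes $h(q)/(\gl q^2)=\pi\qw$. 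I would then verify, using $\pic\qw=\sup_{0<q\le1}h(q)/(\gl q^2)$ from \eqref{pick}, that the threshold $\pmaxe\=\max\set{p\in\oi:\hy(p)=\gly p^2}$ of \refP{PTK} for $\gndxy$ is $0$ exactly when $\pi<\pic$, while for $\pi>\pic$ it equals $\pmax(\pi)/\piqq\in\ooi$, where $\pmax(\pi)$ is the largest $p\le1$ with $h(p)/(\gl p^2)=\pi\qw$ as in the theorem. The one delicate point is that $q=\piqq p$ with $p\le1$ restricts $q$ to $(0,\piqq]$; that this loses nothing follows from the elementary bound $h(q)\le\E D_q=\gl q$, which forces any $q$ with $h(q)/(\gl q^2)\ge\pi\qw$ to satisfy $q<\pi\le\piqq$ (hence $\pmax(\pi)\le\pi$ and $\pmaxe\le1$).

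Granting this dictionary, the conclusions of \refP{PTK} for $\gndxy$ transfer directly after multiplying by $\ny/n\pto\zeta$. In case (ii): \eqref{ptkv} gives $v(\crkx)/n\pto\zeta\cdot\zeta\qw h_1(\piqq\pmaxe)=h_1(\pmax)$; \eqref{ptke} gives $e(\crkx)/n\pto\zeta\cdot\zeta\qw h(\piqq\pmaxe)/2=h(\pmax)/2=\gl\pmax^2/(2\pi)$, using $h(\pmax)=\gl\pmax^2/\pi$; and, since for $j\ge k$ the red vertices again drop out, \eqref{ptkvj} gives $v_j(\crkx)/n\pto\P(\Doo_{\piqq\pmaxe}=j)=\P(\Dpmax=j)$. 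The ``not a local maximum point'' hypothesis matches the one in the theorem because near $q=\pmax(\pi)$ one has $\hy(p)-\gly p^2=\zeta\qw(\gl q^2/\pi)\bigpar{\pi h(q)/(\gl q^2)-1}$, a strictly positive analytic factor times a factor that vanishes at $\pmax(\pi)$, so $\pmaxe$ is a local maximum point of $\hy(p)-\gly p^2$ iff $\pmax(\pi)$ is one of $h(p)/(\gl p^2)$. In case (i), \refP{PTK}(i) together with $\ny=\Theta(n)$ gives the $\opn$ statements, and for $k\ge3$ the extra hypothesis $\sum_{i\le\ny}e^{\ga\dy_i}=O(\ny)$ holds because $\dy_i\le d_i$ and $\nr\le\sum_i d_i=O(n)$, so $\sum_{i\le\ny}e^{\ga\dy_i}\le\sum_{i\le n}e^{\ga d_i}+\nr e^{\ga}=O(n)$, whence \crkx{} is empty \whp.

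I expect the main obstacle to be purely bookkeeping: keeping the substitution $q=\piqq p$ and the normalisation $\zeta$ straight, and checking that the threshold defined intrinsically for $\gndxy$ coincides with the stated $\pic=\pick$ of the original degree sequence, with $\pmaxe\le1$ --- here the inequality $h(q)\le\gl q$ is the sole non-automatic ingredient. Everything else is a transcription of \refP{PTK}. The site-percolation case \refT{TKv} is entirely analogous, now with $\hy(p)=\zeta\qw\pi h(p)$ by \eqref{gdya}, $\zeta=\pi+(1-\pi)\gl$, and $\pmaxe=\pmax(\pi)$ (no rescaling).
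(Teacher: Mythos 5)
Your argument is essentially the paper's own proof: the same dictionary $\hy(p)=\zeta\qw h(\piqq p)$, $\hy_1(p)=\zeta\qw h_1(\piqq p)$ obtained from the explosion construction, the same substitution turning $\hy(p)\ge\gly p^2$ into $\pi h(q)\ge\gl q^2$ with $q=\piqq p$, the same elementary bound forcing $\pmax\le\piqq$ (the paper uses $\pi h(p)\le\pi h(1)\le\pi\gl<\gl p^2$ for $p>\piqq$; your $h(q)\le\gl q$ is the same observation and in fact also disposes of the boundary case $\pmaxe=1$, which the paper treats in a parenthetical remark), the same local-maximum correspondence, the same transfer of the exponential-moment hypothesis, and the same multiplication by $\ny/n\pto\zeta$ at the end. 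The only piece of the statement you do not address is the unconditional claim in (ii) that $\crkx$ is non-empty \whp{} for \emph{every} $\pi>\pic$, including the exceptional values of $\pi$ that are local maximum values of $h(p)/(\gl p^2)$, where your dictionary gives no conclusion; the paper closes this by noting (\refR{RTK}) that there are at most countably many exceptional values, picking a non-exceptional $\pi'\in(\pic,\pi)$, and using the monotone coupling $\gndx\xxe{\pi'}\subseteq\gndx\epi$. This is a one-line addition, but it is needed for the theorem as stated.
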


For convenience, we define $\gf(p)\=h(p)/p^2$, $0<p\le1$.

\begin{remark}\label{RTK}
  Since $h(p)$ is analytic in $(0,1)$, there is at most a countable
number of local maximum points of $\gf(p)\=h(p)/p^2$ (except when $h(p)/p^2$
is constant), 
and thus at most a countable number of local maximum values of
$h(p)/p^2$. Hence, there is at most a countable number of exceptional
values of $\pi$ in part (ii) of Theorems \refand{TKv}{TKe}. 
At these exceptional values, we have a discontinuity of 
$\pmax(\pi)$ and thus of 
the relative asymptotic
size $\pi h_1(\pmax(\pi))$  or $h_1(\pmax(\pi))$ 
of the \kcore; in other words, there is a
phase transition of the \kcore{} at each such exceptional $\pi$.
(See Figures \refand{FPoisson}{F10**i}.)
Similarly, if $\gf$ has an inflection point at $\pmax(\pi)$, \ie, if 
$\gf'(\pmax)=\gf''(\pmax)=\dots=\gf^{(2\ell)}(\pmax)=0$ and 
$\gf^{(2\ell+1)}(\pmax)<0$ for some $\ell\ge1$, 
then $\pmax(\pi)$ and  $h_1(\pmax(\pi))$ are continuous but the
derivatives of $\pmax(\pi)$ and 
$h_1(\pmax(\pi))$ become infinite at this point, so we have a phase
transition of a different type. For all other $\pi>\pic$, the implicit
function theorem shows that $\pmax(\pi)$ and 
$h_1(\pmax(\pi))$ are analytic at $\pi$.

Say that $\tp$ is a \emph{\crp} of $\gf$ if $\gf'(\tp)=0$, and a
\emph{bad \crp} if further, $\tp\in(0,1)$, $\gf(\tp)>\gl$ and
$\gf(\tp)>\gf(p)$ for all $p\in(\tp,1)$. 
It follows that there is a
1--1 correspondence between \phtr{s} in $(\pic,1)$ or $[\pic,1)$
and bad \crp{s} $\tp$ of $\gf$, with
the \phtr{} occurring at $\tpi=\gl/\gf(\tp)$.
This includes $\pic$ if and only if $\sup_{\ooi}\gf(p)$ is attained
and larger than $\gl$,
in which case the last global maximum point is a bad \crp; if this
supremum is finite not attained, then there is another first-order
\phtr{} at $\pic$, while if the supremum is infinite, then $\pic=0$.
Finally, there may be a further \phtr{} at $\tpi=1$ (with $\tp=1$);
this happens if and only if $\gf(1)=h(1)=\gl$ and $\gf'(1)\le0$.

The \phtr{s} are first-order when the corresponding $\tp$  is a bad
\lmp{} of $\gf$, \ie, a bad \crp{} that is a \lmp. (This includes
$\pic$ when $\sup_{\ooi}\gf$ is attained, but not otherwise.) 
Thus, the \phtr{} that occur are typically first order, but there are
exceptions, see Examples \refand{Einfty}{Esecond}.
\end{remark}

\begin{remark}\label{Rjumps}
  The behaviour at $\pi=\pic$ depends on more detailed properties of
  the degree sequences \dnn, or equivalently of $\xD_n$.
Indeed, more precise results can be derived from
\citet[Theorem 3.5]{SJ196}, at least under somewhat stricter
  conditions on \dnn; in particular, it then follows that the width of the
threshold is of the order $n\qqw$, \ie, that there is a sequence
  $\picn$ depending on \dnn, with $\picn\to\pic$, such that
  $\gndx\vpi$ and   $\gndx\epi$ 
\whp{} have a non-empty \kcore{} if
  $\pi=\picn+\go(n)n\qqw$
with $\go(n)\to\infty$, but \whp{} an empty \kcore{} if
  $\pi=\picn-\go(n)n\qqw$, while in the intermediate case
  $\pi=\picn+cn\qqw$ with $-\infty<c<\infty$,
$\P(\gndx\xpi\text{ has a non-empty \kcore})$ converges to a limit
  (depending on $c$) in $(0,1)$. We leave the details to the reader.

The same applies to further phase transitions that may occur.
\end{remark}

\begin{remark}\label{Rk=2b}
If $k=2$, then \eqref{hk=2} yields
\begin{equation*}
 \gf(p)
\=h(p)/p^2
=\sum_{j\ge2} p_jj(1-(1-p)^{j-1})/p,
\end{equation*}
which is decreasing on $\ooi$ (or constant, when
$\P(D>2)=0$), with 
\begin{equation*}
\sup_{p\in\ooi}\gf(p)=\lim_{p\to0}\gf(p)=\sum_jp_jj(j-1)
=\E D(D-1)\le\infty.
\end{equation*}
Hence 
$$\picii=\gl/\E D(D-1)=\E D/\E D(D-1),$$ 
coinciding with the
critical value in \eqref{cgv} for a giant component.

Although there is no strict implication in any direction between ``a
giant component'' and ``a non-empty 2-core'', in random graphs these
seem to typically appear 
together (in the form of a large connected component of the 2-core),
see \refApp{App} for branching process heuristics explaing this.
\end{remark}

\begin{remark}
  We see again that the results for site and bond percolation are
  almost identical. 
In fact, they become the same if we measure the size of the \kcore{}
  in relation to the size of the percolated graph $\gndx\xpi$, since
  $v(\gndx\epi)=n$ but  
 $v(\gndx\vpi)\sim\Bi(n,\pi)$, so  
$v(\gndx\vpi)/n\pto\pi$. 
Again, this is heuristically explained by the
  branching process approximations; see \refApp{App} and note that 
random deletions of vertices or edges yield the
  same result in the branching process, assuming that we do not delete
  the root.
\end{remark}

\begin{proof}[Proof of \refT{TKv}]
The case $\P(D\ge k)=0$ is trivial; in this case $h(p)=0$ for all $p$
and $\pic=0$ so (i) applies. Further, \refP{PTK}(i) applies to \gndx,
and the result follows from 
$\crk(\gndx\vpi)\subseteq\crk(\gndx)$.
In the sequel we thus assume $\P(D\ge k)>0$, which implies $h(p)>0$
and $h_1(p)>0$
for $0<p\le1$.

We apply \refP{PTK} to the exploded graph $\gndxy$, recalling \eqref{crk=}.
For site percolation, $\gndx\vpi$, 
$\py_j= \zeta\qw\pi p_j$ for $j\ge 2$ 
by \eqref{nyjlimv}, and thus 
\begin{equation*}
  \P(\Dy_p=j)=\zeta\qw\pi \P(D_p=j), 
\qquad j\ge2, 
\end{equation*}
and, because $k\ge2$,
\begin{align}
\hy(p)&\=\E\bigpar{\Dy_p\ett{\Dy_p\ge k}}
  =\zeta\qw\pi h(p),
\label{chu0}
\\
\hy_1(p)&\=
\P(\Dy_p\ge k)
=
\zeta\qw\pi h_1(p).
\label{chu2}
\end{align}
Hence, the condition $\hy(p)\ge\gly p^2$ can, using \eqref{gly}, be
written
\begin{equation}\label{chu1}
 \pi h(p)\ge \gl p^2. 
\end{equation}

If $\pi<\pic$, then for every $p\in\ooi$, by \eqref{pick},
$\pi<\pic\le\gl p^2/h(p)$ so \eqref{chu1} does not hold and
$\hy(p)<\gly p^2$. Hence \refP{PTK}(i) applies to $\gndxy$, which
proves (i); note that if 
$\sumin e^{\ga d_i} =O(n)$ for some $\ga>0$, then also
\begin{equation*}
  \sum_{i=1}^{\ny} e^{\ga \y d_i}
\le
  \sum_{i=1}^{n} e^{\ga d_i}+\nr e^\ga
\le
  \sum_{i=1}^{n} e^{\ga d_i}+e^\ga\sum_{j\ge1} jn_j
=O(n).
\end{equation*}

If $\pi>\pic$, then there exists $p\in\ooi$ such that $\pi>\gl
p^2/h(p)$ and thus \eqref{chu1} holds and \refP{PTK}(ii) applies to
\gndxy. Moreover, $\pmax$ in \refP{PTK}(ii) is the largest $p\le1$
such that \eqref{chu1} holds. Since $\pi h(1)\le h(1)\le\gl$ and $h$
is continuous, we have equality in \eqref{chu1} for $p=\pmax$, \ie,
$\pi h(\pmax)=\gl\pmax^2$, so $\pmax$ is as asserted the largest
$p\le1$ with $h(p)/(\gl p^2)=\pi\qw$.

Further, if $\pmax$ is a local maximum point of $\hy(p)-\gly
p^2=\zeta\qw(\pi h(p)-\gl p^2)$, then 
$\pi h(p)-\gl p^2\le0$ in a neighbourhood of $\pmax$ and thus
$h(p)/(\gl p^2)\le1/\pi =h(\pmax)/(\gl\pmax^2)$ there; 
thus $\pmax$ is a local maximum
point of  $h(p)/(\gl p^2)$.
Excluding such points,  we obtain from
\refP{PTK}(ii) using \eqref{crk=}, \eqref{zetav},
\eqref{nyjlimv}, \eqref{gly},
\eqref{chu0} and \eqref{chu2},
\begin{align*}
  \frac{v(\crkx)}{n}
&=
 \frac{\ny}n\cdot \frac{v(\crkx)}{\ny}
\pto\zeta\hy_1(\pmax)
=\pi h_1(\pmax),
\\
  \frac{v_j(\crkx)}{n}
&=
 \frac{\ny}n\cdot \frac{v_j(\crkx)}{\ny}
\pto\zeta\P(\Dypmax=j)
=\pi \P(\Dpmax=j),
\qquad j\ge k,
\\
  \frac{e(\crkx)}{n}
&=
 \frac{\ny}n\cdot \frac{e(\crkx)}{\ny}
\pto
\zeta\frac{\gly\pmax^2}2
=\frac{\gl\pmax^2}2.
\end{align*}

This proves the result when $\pmax$ is not a local maximum point of
$h(p)/(\gl p^2)$. In particular, since $h_1(\pmax)>0$,  $\crkx$ is
non-empty \whp{} when $\pi>\pic$ is not a local maximum value of
$h(p)/(\gl p^2)$.
Finally, even if $\pi$ is such a local maximum value, we can find $\pi'$
with $\pic<\pi'<\pi$ that  is not, because by \refR{RTK} there is only
a countable number of exceptional $\pi$. By what we just have shown,
$\gndx\vpiw$ has \whp{} a non-empty $k$-core, and thus so has
$\gndx\vpi\supseteq\gndx\vpiw$. 
\end{proof}

\begin{proof}[Proof of \refT{TKe}]
We argue as in the proof just given of \refT{TKe}, again using
\eqref{crk=} and applying \refP{PTK} to the exploded graph \gndxy.
We may again assume $\P(D\ge k)>0$, and thus $h(p)>0$
and $h_1(p)>0$
for $0<p\le1$.
We may further assume $\pi>0$.

The main difference from the site percolation case is that for bond percolation
$\gndx\epi$, \eqref{nyjlime} yields
\begin{equation*}
  \P(\Dy=j)=\zeta\qw\P(D_{\piqq}=j),
\qquad j\ge2,
\end{equation*}
and hence
\begin{equation}
\label{wind}
  \P(\Dy_p=j)=\zeta\qw\P(D_{p\piqq}=j),
\qquad j\ge2,
\end{equation}
and thus
\begin{align}
\hy(p)&\=\E\bigpar{\Dy_p\ett{\Dy_p\ge k}}
  =\zeta\qw h(p\piqq),
\\
\hy_1(p)&\=
\P(\Dy_p\ge k)
=
\zeta\qw h_1(p\piqq).
\label{win1}
\end{align}
Consequently, the condition $\hy(p)\ge\gly p^2$ can, using \eqref{gly}, be
written as $h(p\piqq)\ge\gl p^2$, or
\begin{equation}\label{winston}
 \pi h(p\piqq)\ge \gl \bigpar{p\piqq}^2. 
\end{equation}

If $\pi<\pic$, then for every $p\in\ooi$
we have $p\piqq\in\ooi$ and thus by \eqref{pick}
\begin{equation*}
\pi<\pic\le\frac{\gl (p\piqq)^2}{h(p\piqq)}  
\end{equation*}
so \eqref{winston} does not hold and
$\hy(p)<\gly p^2$. Hence \refP{PTK}(i) applies to $\gndxy$ as in the
proof of \refT{TKv}.

If $\pi>\pic$, then there exists $p\in\ooi$ such that $\pi>\gl
p^2/h(p)$ and, as before,  $\pmax$ is the largest such $p$ and
satisfies $h(\pmax)/(\gl\pmax^2)=\pi\qw$.
Furthermore, if $\piqq<p\le1$, then
\begin{equation}
\pi h(p)\le\pi h(1)\le\pi\gl<\gl p^2,
\end{equation}
and thus $p\neq\pmax$. Hence $\pmax\le\piqq$.
Let $\pmaxe\=\pmax/\piqq$. Then $\pmaxe\in\ooi$ and $\pmaxe$ is the
largest $p\le1$ such that \eqref{winston} holds; \ie, the largest
$p\le1$ such that $\hy(p)\ge\gly p^2$.
We thus can apply \refP{PTK}(ii) to \gndxy, with $\pmax$ replaced by
$\pmaxe$, noting that if $\pmaxe$ is a \lmp{} of $\hy(p)-\gly p^2$,
  then $\pmax$ is a \lmp{} of
  \begin{equation*}
\hy(p\pi\qqw)-\gly(p\pi\qqw)^2
=
\zeta\qw\bigpar{h(p)-\pi\qw\gl p^2}	
  \end{equation*}
and thus of $\pi h(p)-\gl p^2$, which as in the proof of \refT{TKv}
implies that $\pmax$ is a \lmp{} of $h(p)/(\gl p^2)$.
(The careful reader may note that there is no problem with the special
case $\pmaxe=1$, when we only consider a one-sided maximum at
$\pmaxe$: in this case $\pmax=\piqq$ and $\pi
h(\pmax)=\gl\pmax^2=\gl\pi$ so $h(\pmax)=\gl$ and $\pmax=1$, $\pi=1$.)
Consequently, when $\pmax$ is not a \lmp{} of $h(p)/(\gl p^2)$,
\refP{PTK}(ii) yields, using \eqref{zetae}, \eqref{wind}, \eqref{win1},
\begin{align*}
  \frac{v(\crkx)}{n}
&=
 \frac{\ny}n\cdot \frac{v(\crkx)}{\ny}
\pto\zeta\hy_1(\pmaxe)
= h_1(\pmax),
\\
  \frac{v_j(\crkx)}{n}
&=
 \frac{\ny}n\cdot \frac{v_j(\crkx)}{\ny}
\pto\zeta\P(\Dypmaxe=j)
= \P(\Dpmax=j),
\qquad j\ge k,
\\
  \frac{e(\crkx)}{n}
&=
 \frac{\ny}n\cdot \frac{e(\crkx)}{\ny}
\pto
\zeta\frac{\gly\pmaxe^2}2
=\frac{\gl\pmax^2}{2\pi}.
\end{align*}
The proof is completed as before.
\end{proof}

Consider now what Theorems \refand{TKv}{TKe} imply for the \kcore{} as
$\pi$ increases from 0 to 1. (We will be somewhat informal; the
statements below should be interpreted as asymptotic as \ntoo{} for
fixed $\pi$, but we for simplicity omit ``\whp'' and ``$\pto$''.)

If $k=2$, we have by \refR{Rk=2b} a similar behaviour as for the giant
component in \refS{Sgiant}: in the interesting case $0<\pic<1$, the
2-core is small, $o(n)$, for $\pi<\pic$ and large, $\Theta(n)$, for
$\pi>\pic$, with a relative size $h_1(\pmax(\pi))$ that is a
continuous function of $\pi$ also at $\pic$ and analytic everywhere
else in $(0,1)$, \cf{} \refT{Trho}.

Assume now $k\ge3$. For the random graph $G(n,p)$ with $p=c/n$, the
classical result by \citet{PSW} shows that there is a first-order
(=discontinuous) phase transition at some value $c_k$;
for $c<c_k$ the \kcore{} is empty and for $c>c_k$ it is non-empty and
with a relative size $\psi_k(c)$ that jumps to a positive value at
$c=c_k$, and thereafter is analytic.
We can see this as a percolation result, choosing a large $\gl$ and
regarding $G(n,c/n)$ as obtained by bond percolation on \gnln{} with
$\pi=c/\gl$ for $c\in[0,\gl]$; $\gnln$ is not exactly a random graph
of the type \gndx{} studied in the present paper, but as said in the
introduction, it can be treated by our methods by conditioning on the
degree sequence, and it has the asymptotic degree distribution
$D\sim\Po(\gl)$. In this case, see \refE{EPo} and \refF{FPoisson}, 
$\gf$ is unimodal, with
$\gf(0)=0$, a maximum at some interior point $p_0\in(0,1)$, and
$\gf'<0$ on $(p_0,1)$.
This is a typical case; $\gf$ has these properties for many other
degree distributions too (and $k\ge3$), and these properties of $\gf$
imply by Theorems \refand{TKv}{TKe} that there is, provided
$\gf(p_0)>\gl$, a first-order phase transition at
$\pi=\pic=\gl/\gf(p_0)$ where the \kcore{} suddenly is created with a
positive fraction $h_1(p_0)$ of all vertices, but no other phase
transitions since $h_1(\pmax(\pi))$ is analytic on $(\pic,1)$.
Equivalently, recalling \refR{RTK}, we see that $p_0$ is the only bad
\crp{} of $\gf$.

However, there are other possibilities too; 
there may be several bad \crp{s} of $\gf$, and thus several \phtr{s}
of $\gf$. There may even be an infinite number of them.
We give some
examples showing different possibilities that may occur.
(A similar example with several phase transitions for a related
hypergraph process is given by \citet{DarlingLN}.)

\begin{figure}[htbp]
  \begin{center}
\includegraphics[width=8cm,angle=-90]{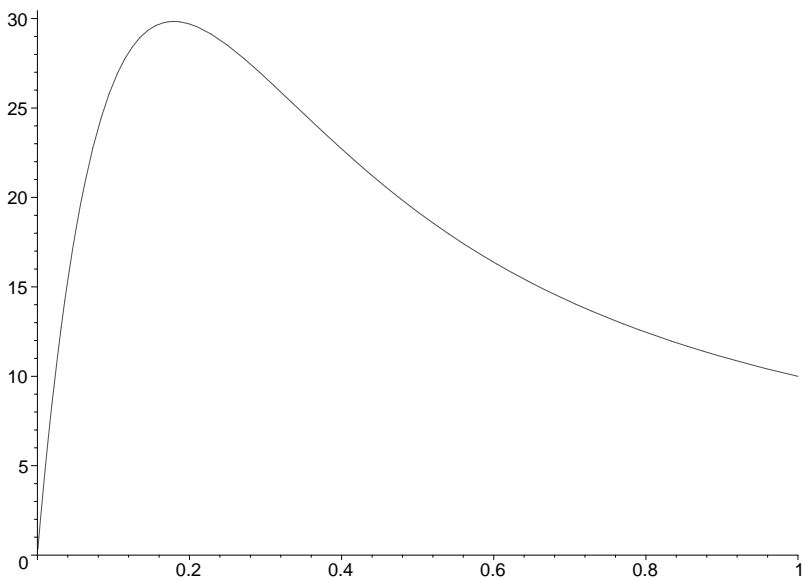}
  \end{center}
\caption{$\gf(p)=h(p)/p^2$ for $D\sim\Po(10)$ and $k=3$.}
\label{FPoisson}
\end{figure}

\begin{figure}[htbp]
  \begin{center}
\includegraphics[width=8cm,angle=-90]{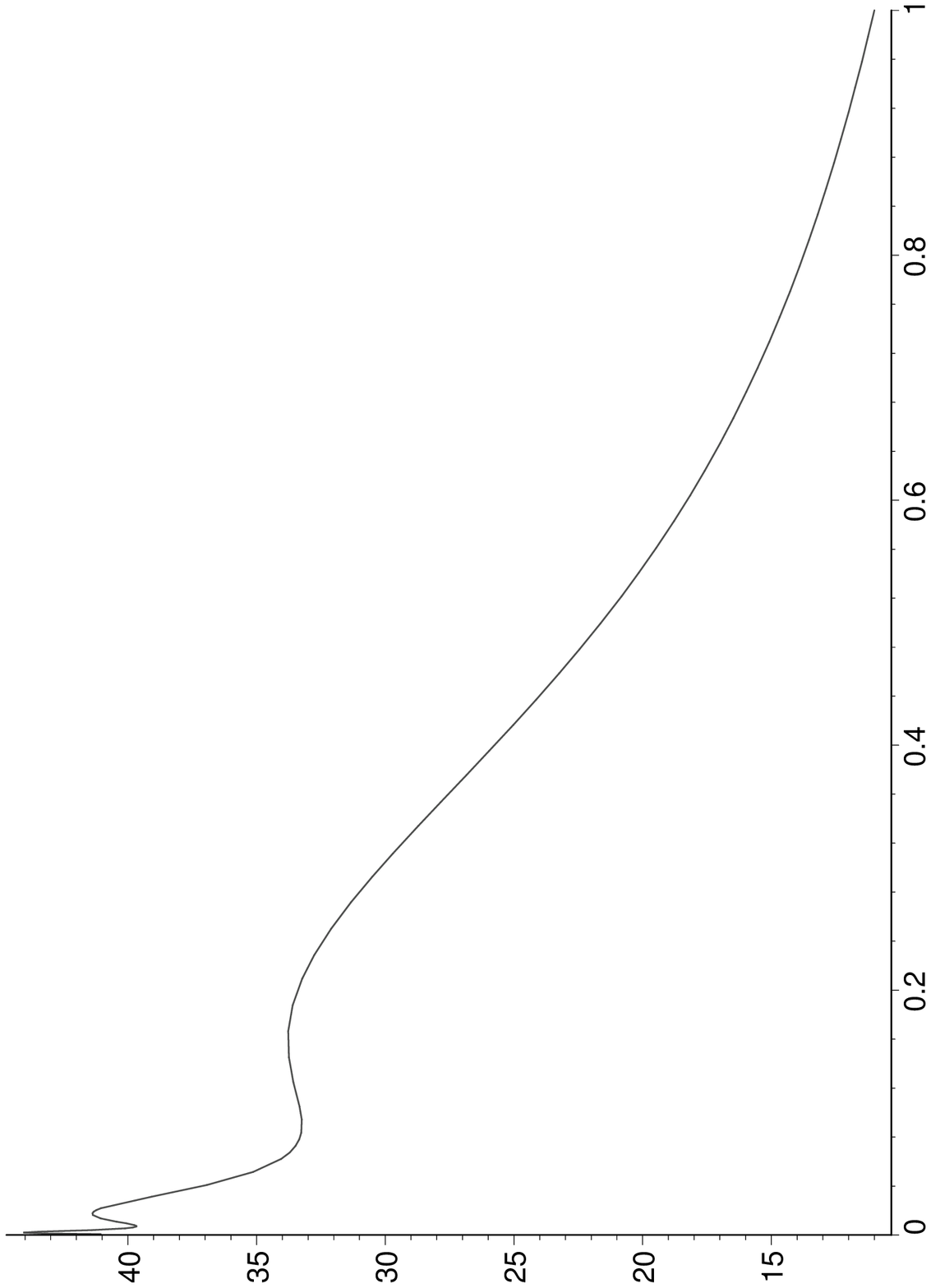}
  \end{center}
\caption{$\gf(p)=h(p)/p^2$ for $k=3$ and
$p_{10^i}=99\cdot10^{-2i}$, $i=1,2,\dots$,
  ($p_j=0$ for all other $j$). Cf.\ \refE{E2**i}.}
\label{F10**i}
\end{figure}

\begin{example}
  \label{EPo}
A standard case is when $D\sim\Po(\gl)$ and $k\ge3$. 
(This includes, as said above, the case $\gnln$  by conditioning on the degree
sequence, in which case we recover the
result by \cite{PSW}.)
Then $D_p\sim\Po(\gl p)$ and a simple calculation shows that 
$h(p)=\gl p \P(\Po(\gl p)\ge k-1)$, see \cite[p.\ 59]{SJ184}.
Hence, if $c_k\=\min_{\mu>0}\mu/\P(\Po(\mu)\ge k-1)$ and
$\gl> c_k$, then
$\pic=\inf_{0<p\le1}\bigpar{\gl p^2/h(p)}
=c_k/\gl$.
Moreover, it is easily shown that $h(p)/p^2$ is unimodal, see
\cite[Lemma 7.2]{SJ184} and \refF{FPoisson}.
Consequently, there is as discussed above a single first-order phase
transition at $\pi=c_k/\gl$ \cite{PSW}.
\end{example}

\begin{example}
  \label{E2}
Let $k=3$ and consider graphs with only two vertex degrees, 3 and $m$,
say, with $m\ge4$. Then, \cf{} \eqref{h2},
\begin{equation*}
  \begin{split}
h(p)&=3p_3\P(D_p=3\mid D=3) + p_m\E(D_p-D_p\ett{D_p\le2}\mid D=m)	
\\
&=3p_3p^3+p_m\bigpar{mp-mp(1-p)^{m-1}-m(m-1)p^2(1-p)^{m-2}}.
  \end{split}
\end{equation*}
Now, let $p_3\=1-a/m$ and $p_m\=a/m$, with $a>0$ fixed and $m\ge a$,
and let $m\to\infty$. Then, writing $h=h_m$,
$h_m(p)\to3p^3+ap$ for $p\in\ooi$ and thus
\begin{equation*}
 \gf_m(p)\=\frac{h_m(p)}{p^2}\to\gfoo(p)\=3p+\frac ap. 
\end{equation*}
Since
$\gfoo'(1)=3-a$, we see that if we choose $a=1$, say, then
$\gfoo'(1)>0$. Furthermore, then $\gfoo(1/4)=\tfrac34+4>\gfoo(1)=4$.
Since also
$
\gf_m'(1)=3p_3-mp_m=3p_3-a\to\gfoo'(1)$, it follows that if $m$ is
large enough, then $\gf_m'(1)>0$ but $\gf_m(1/4)>\gf_m(1)$. We fix
such an $m$ and note that $\gf=\gf_m$ is continuous on \oi{} with
$\gf(0)=0$, because the sum in \eqref{h} is finite with each term
$O(p^3)$.

Let $\ppp$ be the \gmp{} of $\gf$ in \oi. (If not unique, take the
largest value.)
Then, by the properties just shown, $\ppp\neq0$ and $\ppp\neq1$, so
$\ppp\in(0,1)$; moreover, 1 is a \lmp{} but not a \gmp.
Hence, $\pic=\gl/\gf(\ppp)$ is a first-order \phtr{} where the
3-core suddenly becomes non-empty and containing a positive fraction
$h_1(\ppp)$ of all (remaining) vertices.
There is another \phtr{} at $\pi=1$. We have $\gf(1)=h(1)=\gl$, but
since $\gf'(1)>0$, if $\ppl\=\sup{p<1:\gf(p)>\gf(1)}$, then
$\ppl<1$. Hence, as $\pi\upto1$, $\pmax(\pi)\upto\ppl$ and
$h(\pmax(\pi))\upto h_1(\ppl)<1$. Consequently, the size of the 3-core
jumps again at $\pi=1$. 

For an explicit example, numerical calculations (using \maple) show
that we can take
$a=1$ and $m=12$, or $a=1.9$ and $m=6$.
\end{example}

\begin{example}
  \label{Einfty}
Let $k=3$ and let $D$ be a mixture of Poisson distributions:
\begin{equation}
  \label{einfty}
\P(D=j)=p_j=\sum_i q_i\P(\Po(\gl_i)=j),
\qquad j\ge0,
\end{equation}
for some finite or infinite sequences
$(q_i)$ and $(\gl_i)$ with $q_i\ge0$, $\sum_i q_i=1$ and $\gl_i\ge0$.
In the case $D\sim\Po(\gl)$ we have, \cf{} \refE{EPo},
$D_p\sim\Po(\gl p)$ and thus
\begin{equation*}
  \begin{split}
	h(p)&=\E D_p-\P(D_p=1)-2\P(D_p=2)
=\gl p-\gl pe^{-\gl p}-(\gl p)^2e^{-\gl p}
\\&
=(\gl p)^2f(\gl p),
  \end{split}
\end{equation*}
where $f(x)\=\bigpar{1-(1+x)e^{-x}}/x$.
Consequently, by linearity, for $D$ given by \eqref{einfty},
\begin{equation}
  \begin{split}
	h(p)=\sum_iq_i(\gl_i p)^2f(\gl_i p),
  \end{split}
\end{equation}
and thus
\begin{equation}\label{gf1}
	\gf(p)=\sum_iq_i\gl_i^2f(\gl_i p).
\end{equation}

As a specific example, take $\gl_i=2^i$ and $q_i=\gl_i\qww=2^{-2i}$,
$i\ge1$, and add $q_0=1-\sum_{i\ge1}q_i$ and $\gl_0=0$ to make $\sum q_i=1$.
Then
\begin{equation}\label{magnus}
	\gf(p)=\sumii f(2^i p).
\end{equation}
Note that $f(x)=O(x)$ and $f(x)=O(x\qw)$ for 
$0<x<\infty$. Hence, the sum in \eqref{magnus} converges uniformly on
every compact interval $[\gd,1]$; moreover, if we define
\begin{equation}
  \psi(x)\=\sum_{i=-\infty}^\infty f(2^i x),
\end{equation}
then the sum converges uniformly on compact intervals of $(0,\infty)$
and 
$|\gf(p)-\psi(p)|=O(p)$.
Clearly, $\psi$ is a multiplicatively periodic function on $\ooo$:
$\psi(2x)=\psi(x)$; we compute the Fourier series of the periodic
function $\psi(2^y)$ on $\bbR$ and find
$\psi(2^y)=\sumnn\hpsi(n) e^{2\pi\ii ny}$ with, using integration by
parts,
\begin{equation*}
  \begin{split}
\hpsi(n)
&=\intoi\psi(2^y)e^{-2\pi\ii ny}\ddx y	
=\intoi\sumjj f(2^j2^y)e^{-2\pi\ii ny}\ddx y	
\\&
=\sumjj\intoi f(2^{j+y})e^{-2\pi\ii ny}\ddx y	
=\intoooo f(2^{y})e^{-2\pi\ii ny}\ddx y	
\\&
=\intoo f(x)x^{-2\pi\ii n/\ln2}\frac{\ddx x}{x\ln2}	
\\&
=\frac1{\ln2}\intoo \bigpar{1-(1+x)e^{-x}}x^{-2\pi\ii n/\ln2-2}\ddx x
\\&
=\frac1{\ln2(2\pi\ii n/\ln2+1)}\intoo xe^{-x}x^{-2\pi\ii n/\ln2-1}
\ddx x
\\&
=
\frac{\Gamma(1-2\pi\ii n/\ln2)}{\ln2+2\pi\ii n}.
  \end{split}
\end{equation*}

Since these Fourier coefficients are non-zero, we see that
$\psi$ 
is a non-constant
continuous function on $\ooo$ with multiplicative period 2.
Let $a>0$ be any point that is a global maximum of $\psi$, 
let $b\in(a,2a)$ be a point that is not,
and let
$I_j\=[2^{-j-1}b,2^{-j}b]$. Then $\psi$ attains its global maximum at
the interior point $2^{-j} a$ in $I_j$, and since $\gf(p)-\psi(p)=O(2^{-j})$
for $p\in I_j$, it follows that if 
$j$ is large enough, then $\gf(2^{-j}a)>\max(\gf(2^{-j-1}b),\gf(2^{-j}b))$.
Hence, if the
maximum of $\gf$ on $I_j$ is attained at $\tp_j\in I_j$
(choosing the largest maximum point if it is not unique), then, 
at least for large $j$, $\tp_j$ is in the interior of $I_j$, so $\tp_j$ is
a \lmp{} of $\gf$.
Further, as $j\to\infty$, $\gf(\tp_j)\to\max\psi>\hpsi(0)=1/\ln 2$ while
$\gl\=\E D=\sum_i q_i\gl_i=\sum_1^\infty 2^{-i}=1$, so
$\gf(\tp_j)>\gl$ for large $j$.
Moreover, $\tp_j/2\in I_{j+1}$, and 
since \eqref{magnus} implies
\begin{equation}\label{emma}
  \gf(p/2)=\sumii f(2^{i-1}p)
=\sumi f(2^{i}p)>\gf(p),
\qquad p>0,
\end{equation}
thus
$\gf(\tp_j)<\gf(\tp_j/2)\le\gf(\tp_{j+1})$. It follows that if 
$p\in I_i$ for some $i< j$, then $\gf(p)\le\gf(\tp_i)<\gf(\tp_j)$.
Consequently, for large $j$ at least, $\tp_j$ is a bad \lmp{}, 
and thus there is a phase
transition at
$\pi_j\=\gl/\gf(\tp_j)\in(0,1)$.
This shows that there is an infinite sequence of (first-order)  phase
transitions. 

Further, in this example $\gf$ is bounded
(with $\sup\gf=\max\psi$), and thus $\pic>0$.
Since, by \eqref{emma},  $\sup\gf$ is not attained, this is an example
where the \phtr{} at $\pic$ is continuous and not first-order; simple
calculations show that as $\pi\downto\pic$,
$\pmax(\pi)=\Theta(\pi-\pic)$
and $h_1(\pmax(\pi))=\Theta((\pi-\pic)^2)$. 

Because of the exponential decrease of $|\Gamma(z)|$ on the
imaginary axis, $|\hpsi(n)|$ is very small for $n\neq0$; we have
$|\hpsi(\pm1)|\approx 0.78\cdot 10^{-6}$ and the others much smaller,
so $\psi(x)$ deviates from its mean $\hpsi(0)=1/\ln2\approx1.44$ by
less than $1.6\cdot10^{-6}$. The oscillations of $\psi$ and $\gf$ are
thus very small and hard to observe numerically or graphically unless a large
precision is used. (Taking \eg{} $\gl_i=10^i$ yields larger oscillations.)

Note also that in this example, $\gf$ is not continuous at $p=0$;
$\gf(p)$ is bounded but does not converge as $p\downto0$.
Thus $h$ is not analytic at $p=0$.
\end{example}

\begin{example}\label{Einf2}
  Taking $\gl_i=2^i$ as in \refE{Einfty} but modifying $q_i$ to
  $2^{(\eps-2)i}$ for some small $\eps>0$, similar calculations show
  that $p^\eps\gf(p)=\psi_\eps(p)+O(p)$ for a non-constant
function $\psi_\eps$  with multiplicative period 2, and it follows
  again that, at least if $\eps$ is small enough, there is an infinite
  number of phase transitions. In this case, $\gf(p)\to\infty$ as
  $p\to0$, so $\pic=0$.
\end{example}

Since $\gf$ is analytic on $\ooi$, if there is an infinite number of
bad \crp{s}, then we may order them (and 1, if $\gf'(1)\le0$ and
$\gf(1)=\gl$) in a decreasing sequence $\tp_1>\tp_2>\dots$, with
$\tp_j\to0$. It follows from the definition of bad \crp{s} that then
$\gf(\tp_1)<\gf(\tp_2)<\dots$, and 
$\sup_{0<p\le1}\gf(p)=\sup_j\gf(\tp_j)=\lim_{j\to\infty}\gf(\tp_j)$.
Consequently, if there is an infinite number of phase transitions,
they occur at $\set{\pi_j}_1^\infty\cup\set\pic$ for some decreasing sequence
$\pi_j\downto\pic\ge0$.

\begin{example}\label{E2**i}
We can modify \refand{Einfty}{Einf2} and
consider 
random graphs where all vertex degrees $d_i$ are powers of 2; thus
$D$ has support on \set{2^i}. If we choose $p_{2^i}\sim 2^{-2i}$ or
$p_{2^i}\sim 2^{(\eps-2)i}$  suitably, 
the existence of infinitely many phase transitions follows by
calculations similar to the ones above. (But the details are a little
more complicated, so we omit them.) 
A similar example concentrated on \set{10^i} is shown in
\refF{F10**i}.
\end{example}

\begin{example}\label{EN}
  We may modify \refE{Einfty} by conditioning $D$ on $D\le M$ for some
  large $M$. If we denote the corresponding $h$ and $\gf$ by $h_M$
  and $\gf_M$, it is easily seen that $h_M\to h$ unformly on \oi{} as
  $M\to\infty$, and thus $\gf_M\to\gf$ uniformly on every interval
  $[\gd,1]$.
It follows that if we consider $N$ bad local maximum points of $\gf$,
  then there are $N$ corresponding bad \lmp{s} of $\gf_M$ for large
  $M$, and thus at least $N$ phase transitions.
This shows that we can have any finite number of phase transitions
  with degree sequences $\dnn$ where the degrees are uniformly bounded.
(\refE{ED2} shows that we cannot have infinitely many phase
  transitions in this case.)
\end{example}

\begin{example}\label{ED2}
In Examples \refand{Einfty}{Einf2} with infinitely many phase transitions, we
have $\E D^ 2=\sum_i q_i(\gl_i^2+\gl_i)=\infty$. This is not a
coincidence; in fact, we can show that:
\emph{If\/ $\E D^2<\infty$, then the \kcore{} has only finite number of phase
  transitions.} 

This is trivial for $k=2$, when there never is more than one phase
transition. Thus, assume $k\ge3$.
If $k=3$, then, \cf{} \eqref{h2},
  \begin{equation*}
	\begin{split}
\gf(p)&=h(p)/p^2
=\sum_{l\ge3} lp_l\bigpar{p-p(1-p)^{l-1}-(l-1)p^2(1-p)^{l-2}}p\qww	  
\\&
=\sum_{l\ge3} lp_l\Bigpar{\frac{1-(1-p)^{l-1}}{p}-(l-1)(1-p)^{l-2}}.	  
	\end{split}
  \end{equation*}
Each term in the sum is non-negative and bounded by
$lp_l\bigpar{1-(1-p)^{l-1}}/p\le lp_l(l-1)$, and as $p\to0$ it
converges to $lp_l(l-1-(l-1))=0$. Hence, by dominated convergence,
using the assumption
$\sum p_ll(l-1)=\E D(D-1)<\infty$, we have $\gf(p)\to0$ as
$p\to0$. For $k>3$, $h(p)$ is smaller than for $k=3$ (or possibly the
same), so we have the same conclusion.
Consequently, 
$\gf$ is continuous on $\oi$ and
has a \gmp{} $p_0$ in \ooi. Every bad \crp{} has to belong to
$[p_0,1]$. Since $\gf$ is analytic on
$[p_0,1]$, it has only a finite number of \crp{s} there
(except in the trivial case $\gf(p)=0$), and thus there is only a
finite number of phase transitions.
\end{example}

\begin{example}
  \label{Esecond}
We give an example of a continuous (not first-order) \phtr, letting
$D$ be a mixture as in \refE{Einfty} with two components and carefully
chosen weights $q_1$ and $q_2$.

Let $f$ be as in \refE{Einfty} and note that $f'(x)\sim1/2$ as $x\to0$
and $f'(x)\sim-x^{-2}$ as $x\to\infty$.
Hence, for some $a,A\in\ooo$, $\tfrac14<f'(x)<1$ for $0<x\le 4a$ and
$\tfrac12x\qww\le-f'(x)\le2x\qww$ for $x\ge A$.
Let $f_1(x)\=f(Ax)$ and $f_2(x)\=f(ax)$. Then, $f_1'(x)<0$ for
$x\ge1$. Further, if $g(x)\=f_2'(x)/|f_1'(x)|=(a/A)f'(ax)/|f'(Ax)|$,
then 
\begin{align*}
  g(1)&=\frac {af'(a)}{A|f'(A)|}
<\frac{a}{A\cdot A\qww/2}=2aA,
\\
  g(4)&=\frac {af'(4a)}{A|f'(4A)|}
>\frac{a/4}{A\cdot2 (4A)\qww}=2aA,
\end{align*}
and thus $g(1)<g(4)$.
Further, if $x\ge A/a$, then $f_2'(x)<0$ and thus $g(x)<0$.
Consequently,
$\sup_{x\ge1}g(x)=\max_{1\le x\le A/a}g(x)<\infty$,
and if $x_0$ is the point where the latter maximum is attained
(choosing the largest value if the maximum is attained at several
points), then $1<x_0<\infty$ and $g(x)<g(x_0)$ for $x>x_0$.
Let $\gb\=g(x_0)$ and
\begin{equation}
  \label{m1}
\psi(x)\=\gb f_1(x)+f_2(x)
=\gb f(Ax)+f(ax).
\end{equation}
Then $\psi'(x)\le0$ for $x\ge1$, $\psi'(x_0)=0$ and $\psi'(x_0)<0$ for
$x>x_0$. 

Let $b$ be large, to be chosen later, and let $D$ be as in
\refE{Einfty} with 
$q_1\=\gb a^2/(\gb a^2+A^2)$, $q_2\=1-q_1$, $\gl_1\=bA$, $\gl_2\=ba$.
Then, by \eqref{gf1},
\begin{equation}
  \label{m2}
\gf(p)
=
q_1(bA)^2f(bAp)+q_2(ba)^2f(bap)
=\frac{b^2a^2A^2}{\gb a^2+A^2}\psi(bp).
\end{equation}
Hence, $\gf'(x_0/b)=0$,
$\gf'(x)\le0$ for $x\ge1/b$ and $\gf'(x)<0$ for $x>x_0/b$.
Consequently, $x_0/b$ is a \crp{} but not a \lmp. Furthermore,
$
\gf(x_0/b)=q_1b^2A^2f(Ax_0)+q_2b^2a^2f(ax_0)
$
and
$\gl\=\E D=q_1\gl_1+q_2\gl_2=b(q_1A+q_2a)
$; hence, if $b$ is large enough, then $\gf(x_0/b)>\gl$. We choose $b$
such that this holds and $b>x_0$;
then $\tp\=x_0/b$ is a bad \crp{} which is an inflection point and not
a \lmp. Hence there is a continuous \phtr{} at
$\tpi\=\gl/\gf(\tp)\in(\pic,1)$. 

We have $\gf'(\tp)=\gf''(\tp)=0$; we claim that, at least if $A$ is
chosen large enough, then $\gf'''(\tp)\neq0$. This implies that, for
some $c_1,c_2,c_3>0$, 
$\gf(p)-\gf(\tp)\sim-c_1(p-\tp)^3$ as $p\to\tp$, and 
$\pmax(\pi)-\pmax(\tpi)\sim c_2(\pi-\tpi)^{1/3}$
and $h_1(\pmax(\pi))-h_1(\pmax(\tpi))\sim c_3(\pi-\tpi)^{1/3}$
as $\pi\to\tpi$, so the critical exponent at $\tpi$ is $1/3$.

To verify the claim, note that if also $\gf'''(\tp)=0$, then by
\eqref{m2} and \eqref{m1},
$\psi'(x_0)=\psi''(x_0)=\psi'''(x_0)=0$,, and thus
\begin{equation}
  \label{m3}
\gb A^jf^{(j)}(Ax_0)+ a^jf^{(j)}(ax_0)=0,
\qquad j=1,2,3.
\end{equation}
Let $x_1\=Ax_0$ and  $x_2\=ax_0$. Then $x_1\ge A$, and $f'(x_2)>0$ so
$x_2\le C$ for some $C$.
Further, \eqref{m3} yields
\begin{equation}
\label{m4}
  \frac{x_2f''(x_2)}{f'(x_2)}
=
  \frac{x_1f''(x_1)}{f'(x_1)}
\qquad\text{and}\qquad
  \frac{x_2^2f'''(x_2)}{f'(x_2)}
=
  \frac{x_1^2f'''(x_1)}{f'(x_1)}
.
\end{equation}
Recall that $x_1$ and $x_2$ depend on our choices of $a$ and $A$, and
that we always can decrease $a$ and increase $A$. Keep $a$ fixed and
let $A\to\infty$ (along some sequence). Then $x_1\to\infty$ but
$x_2=O(1)$, so by selecting a subsequence we may assume $x_2\to
y\ge0$.
As $x\to\infty$, $f'(x)\sim-x\qww$,
$f''(x)\sim 2x^{-3}$, and $f'''(x)\sim -6x^{-4}$.
Hence, if \eqref{m4} holds for all large  $A$ (or just a sequence
$A\to\infty$), we obtain by taking the limit
\begin{equation*}
  \frac{yf''(y)}{f'(y)}
=
\lim_{x\to\infty}
  \frac{xf''(x)}{f'(x)}
=-2
\qquad\text{and}\qquad
  \frac{y^2f'''(y)}{f'(y)}
=
\lim_{x\to\infty}
  \frac{x^2f'''(x)}{f'(x)}
=6
.
\end{equation*}

Finally, let $F(x)\=xf(x)=1-(1+x)e^{-x}$.
Then $F''(y)=yf''(y)+2f'(y)=0$ and
$F'''(y)=yf'''(y)+3f''(y)=6y\qw f'(y)-6y\qw f'(y)=0$.
On the other hand, $F'(x)=xe^{-x}$, $F''(x)=(1-x)e^{-x}$,
$F'''(x)=(x-2)e^{-x}$,  so there is no solution to
$F''(y)=yF'''(y)=0$. This contradiction finally proves that
$\gf'''(\tp)\neq0$, at least for large $A$.
\end{example}

\section{Bootstrap percolation in random regular graphs}\label{Sbootstrap}

\emph{Bootstrap percolation} on a graph $G$ is a process that can be
regarded as a model for the spread of an infection.
We
start by infecting a subset $\iio$ of the vertices; 
typically we let $\iio$ be a random subset of the vertex set $V(G)$
such that each
vertex is infected with some given probability $q$, independently of
all other vertices, but other choices are possible, including 
a deterministic choice of $\iio$.
Then, 
for some given threshold $\ell\in\bbN$,
every uninfected 
vertex that has at least $\ell$ infected
neighbours becomes infected. (Infected vertices stay infected; they
never recover.) This is repeated until there are no further infections.
We let 
$\iif=\iifl$ be the final set of infected vertices.
(This is perhaps not a good model for infectious diseases, but may be
reasonable as a model for the spread of rumors or beliefs: 
you are
skeptical the first time you hear something but get convinced the
$\ell$th time.)

Bootstrap percolation is more or less the opposite to taking the
$k$-core. For regular graphs, there is an exact correspondence: it is
easily seen that if the common vertex degree in $G$ is $d$, 
then the set
$V(G)\setminus\iifl$ 
of finally uninfected vertices
equals the $(d+1-\ell)$-core of 
the set
$V(G)\setminus\iio$ 
of initially uninfected vertices.
Furthermore, 
if the initial infection is random, with vertices infected
independently with a common probability $q$, then
the initial infection can be seen as a site
percolation, where each vertex remains uninfected with probability
$\pi=1-q$.
Consequently, in this case 
we obtain results on the size of the final uninfected
set from \refT{TKv}, taking $k=d-\ell+1$ and $\pi=1-q$.

Bootstrap percolation on the random regular graph \gndq{} with fixed
vertex degree $d$ was studied by \citet{BalPitt}.
We can recover a large part of their results from \refT{TKv}.
We have, as just said, $k=d-\ell+1$ and $\pi=1-q$. Moreover, all
degrees $d_i\nn=d$; hence the definitions in \refS{Sdegrees} yield
$n_j=n\gd_{jd}$, $p_j=\gd_{jd}$, $\xD_n=d$, $D=d$ and $\gl=\E D=d$.
\refCond{C1} is satisfied trivially.
Furthermore, \eqref{h} and \eqref{h1} yield, since $D_p\sim\Bi(d,p)$,
\begin{equation*}
  \begin{split}
h(p)
&=
\sum_{j=k}^d j \bxxx djp
=
\sum_{j=k}^d j\binom dj p^j(1-p)^{d-j}
=
\sum_{j=k}^d dp\binom {d-1}{j-1} p^{j-1}(1-p)^{d-j}
\\&
=
dp\P\bigpar{\Bi(d-1,p)\ge k-1}	
=
dp\P\bigpar{\Bi(d-1,p)\ge d-\ell}	
\\&
=
dp\P\bigpar{\Bi(d-1,1-p)\le \ell-1}	
  \end{split}
\end{equation*}
and
\begin{equation*}
  \begin{split}
h_1(p)
&=
\P\bigpar{\Bi(d,p)\ge k}	
=
\P\bigpar{\Bi(d,p)\ge d-\ell+1}	
=
\P\bigpar{\Bi(d,1-p)\le \ell-1}	
.
  \end{split}
\end{equation*}
Consequently, \eqref{pick} yields
\begin{equation*}
  \pic
\=
\inf_{0<p\le1}\frac{d p^2}{h(p)} 
= 
\inf_{0<p\le1}\frac{p}{\P\bigpar{\Bi(d-1,1-p)\le \ell-1}}.
\end{equation*}
We define $\qc\=1-\pic$, and \refT{TKv} translates as follows.
(Recall that we have proven \refT{TKv} for the random multigraph
$\gndqx$, but as said in the introduction, the result holds for the
simple random graph $\gndq$ by a standard conditioning.)

\begin{theorem}[\cite{BalPitt}]
    \label{Tboot}
Let $d$, $\ell$ and $q\in\oi$ be given  with $1\le\ell\le d-1$.
Consider bootstrap percolation on the random $d$-regular graph
$\gndq$,
with threshold $\ell$ and vertices initially infected randomly
with probability $q$, independently of each other. 
Let
\begin{equation}\label{qc}
  \qc=\qcl
\=
1-\inf_{0<p\le1}\frac{p}{\P\bigpar{\Bi(d-1,1-p)\le \ell-1}} .
\end{equation}
  \begin{romenumerate}
\item
If $q>\qc$, then $|\iif|=n-\opn$.
Furthermore, 
if $l\le d-2$ then  \whp{} $|\iif|=n$, \ie, all vertices 
eventually become infected.
\item
If $q<\qc$, then \whp{} a positive proportion of the vertices remain
uninfected, More precisely,
if\/ $\pmax=\pmax(q)$ is the largest $p\le1$ such that
$\P\bigpar{\Bi(d-1,1-p)\le \ell-1}/ p=(1-q)\qw$, 
then
\begin{align*}
|\iif|/n\pto 1-(1-q) \P\bigpar{\Bi(d,1-\pmax)\le\ell-1}
<1.
\end{align*}
\end{romenumerate}
\end{theorem}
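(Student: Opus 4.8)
The plan is to derive everything from \refT{TKv} via the exact combinatorial identity relating bootstrap percolation on a regular graph to a \kcore. First I would record the deterministic fact that, for \emph{any} $d$-regular graph $G$ and any $\iio\subseteq V(G)$, running the bootstrap process with threshold $\ell$ gives
\begin{equation*}
  V(G)\setminus\iifl=\crk\bigpar{G[V(G)\setminus\iio]},\qquad k\=d-\ell+1 .
\end{equation*}
Indeed, a finally uninfected vertex has at most $\ell-1$ infected neighbours in the final state, hence at least $d-\ell+1=k$ neighbours in $V(G)\setminus\iifl$, so $G[V(G)\setminus\iifl]$ has minimum degree $\ge k$ and is an induced subgraph of $G[V(G)\setminus\iio]$, giving ``$\subseteq$''; conversely, if $C$ is the \kcore{} of $G[V(G)\setminus\iio]$ then each vertex of $C$ has at most $\ell-1$ neighbours outside $C$, so considering the first vertex of $C$ ever infected yields a contradiction, whence $C\subseteq V(G)\setminus\iifl$. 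When $\iio$ keeps each vertex independently with probability $q$, the graph $G[V(G)\setminus\iio]$ is precisely the site percolation $\gndqx\vpi$ with $\pi=1-q$, so $v\bigpar{\crk(\gndqx\vpi)}=n-|\iifl|$ identically; the transfer from $\gndqx$ to the simple graph $\gndq$ is the usual one via \eqref{simple}, whose hypothesis $\sum_id_i^2=nd^2=O(nd)$ holds trivially.

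Next I would substitute $d_i=d$ into the quantities of \refS{Score}, which is exactly the computation displayed just above the theorem: $\gl=d$, $D_p\sim\Bi(d,p)$, $h(p)=dp\,\P\bigpar{\Bi(d-1,1-p)\le\ell-1}$, $h_1(p)=\P\bigpar{\Bi(d,1-p)\le\ell-1}$, and $\pic=\pick=\inf_{0<p\le1}p/\P\bigpar{\Bi(d-1,1-p)\le\ell-1}$, so that $\qc=1-\pic$ is \eqref{qc}. Part (i) then follows from \refT{TKv}(i) with $\pi=1-q<\pic$ (i.e.\ $q>\qc$): $\crk(\gndqx\vpi)$ has $\opn$ vertices, so $|\iifl|=n-\opn$; and if moreover $\ell\le d-2$ then $k=d-\ell+1\ge3$ and $\sum_ie^{\ga d_i}=ne^{\ga d}=O(n)$ for any $\ga>0$, so the stronger clause of \refT{TKv}(i) gives that the \kcore{} is empty \whp{}; by the correspondence, $|\iifl|=n$ \whp.

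Finally, for part (ii) I would apply \refT{TKv}(ii) with $\pi=1-q>\pic$ (i.e.\ $q<\qc$), obtaining $v\bigpar{\crk(\gndqx\vpi)}/n\pto\pi h_1(\pmax)=(1-q)\,\P\bigpar{\Bi(d,1-\pmax)\le\ell-1}>0$, hence the stated limit for $|\iifl|/n$ — \emph{provided} the exceptional hypothesis of \refT{TKv}(ii), that $\pmax(\pi)$ is not a local maximum point of $\gf(p)\=h(p)/p^2$, is automatic here. This is the only step that is not bookkeeping, and I expect it to be the main (short) obstacle. I would prove it by showing that for a constant degree distribution $\gf$ is unimodal on $\ooi$. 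With $N(p)\=\P\bigpar{\Bi(d-1,p)\ge d-\ell}$ one has $\gf(p)=dN(p)/p$, so $\gf'(p)$ and $\Psi(p)\=pN'(p)-N(p)$ have the same sign; now $\Psi(0)=0$ and $\Psi'(p)=pN''(p)$, while $N'(p)$ is proportional to $p^{d-\ell-1}(1-p)^{\ell-1}$, whose derivative is $\ge0$ on an initial interval and $\le0$ afterwards (one piece possibly empty), so $\Psi$ is nondecreasing then nonincreasing. Since $\gf(1)=h(1)=\gl$ and $\Psi(1)=N'(1)-1\le0$ when $\ell\ge2$ (and $\gf$ is monotone outright when $\ell=1$ or $k=2$), $\Psi$ changes sign at most once, from $+$ to $-$; thus $\gf$ increases then decreases, its only local maximum is its global maximum, and for $\pi\in(\pic,1)$ the largest root $\pmax(\pi)$ of $h(p)/(\gl p^2)=\pi\qw$ lies on the decreasing branch, so it is not a local maximum point. (If $\ell=1$ then $\gf$ is nondecreasing with $\sup\gf=\gf(1)=\gl$, so $\pic=1$ and (ii) is vacuous; if $k=2$ then $\gf$ is nonincreasing and has no local maximum, and the same conclusion holds.) This completes the proof.
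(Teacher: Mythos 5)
Your proposal is correct and takes essentially the same route as the paper: translate bootstrap percolation on the $d$-regular (multi)graph into the $(d+1-\ell)$-core of the site-percolated graph with $\pi=1-q$, specialize $h$, $h_1$ and $\pic$ to the constant degree sequence, and invoke \refT{TKv}. The only difference is that where the paper disposes of the remaining issue — that $\pmax$ is not a local maximum point of $\bgf(p)=h(p)/(dp^2)$ — by citing the "simple calculus" in Balogh--Pittel showing $\bgf$ is unimodal for $\ell<d-1$ and decreasing for $\ell=d-1$, you carry out that calculus yourself via $\Psi(p)=pN'(p)-N(p)$, and your computation (including the edge cases $\ell=1$ and $\ell=d-1$) is sound.
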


\begin{proof}
It remains only to show that in case (ii),
$\pmax$ is not a \lmp{} of 
$\bgf(p)\=h(p)/(dp^2)=\P\bigpar{\Bi(d-1,1-p)\le \ell-1}/ p$.
(In the notation of \refS{Score}, $\bgf(p)=\gf(p)/\gl$.)
In fact, some simple calculus shows, see 
\cite[\S3.2, where $R(y)=\bgf(y)\qw$]{BalPitt}  for details,
that the function $\bgf$
is unimodal when
$\ell<d-1$ and decreasing when $\ell=d-1$; thus there is no \lmp{} 
when $\ell=d-1$, and otherwise 
the only \lmp{} is
the \gmp{} $p_0$ with $\bgf(p_0)=\pic\qw=(1-\qc)\qw$.
(It follows also \cite{BalPitt} that 
the equation
$\bgf(p)=(1-q)\qw$ 
in (ii) has exactly two roots for every $q<\qc$
when $\ell<d-1$ and one when $\ell=d-1$.)
\end{proof}

\begin{remark}
  The case $\ell=1$ ($k=d$) is rather trivial;
in this case, $\iif$ is the union of all components of $\gndq$ that
contain at least one initially infected vertex. If further
$d\ge3$, then
$\gndq$ is \whp{} connected, and thus any non-empty $\iio$ \whp{}
yields $|\iif|=n$. (The case $d=2$ is different but also simple:
$G(n,2)$ consists of disjoint cycles, and only a few small cycles will
remain uninfected.)
\end{remark}

Actually, \citet{BalPitt} study primarily the case when the 
initially infected set $\iio$ is deterministic; they then derive
the result above for a random $\iio$ by conditioning on $\iio$.
Thus, assume now that $\iio$ is given, with $|\iio|=m$.
 (For $\gndq$, because
of the symmetry, it does not matter whether we remove a specified set
of $m$ vertices or a uniformly distributed random set with $m$ vertices.)
Assuming $m\sim nq$, we have the same
results in this case, see \refR{Rfixed}; indeed, the proof is slightly
simpler
since the use of the law of large numbers in \refSS{SSvertex} is
replaced by the 
obvious  $\ny_d=n-m$, $\ny_1=\nr=dm$.

\begin{theorem}
 \refT{Tboot} remains valid if the initially infected set is
any given set with $m=m(n)$ vertices, where $m/n\to q$.
\end{theorem}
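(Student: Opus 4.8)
The plan is to reduce the claim to \refT{Tboot} (equivalently to \refT{TKv}) by the same explosion device used throughout the paper, observing that the only input that changes when $\iio$ is prescribed rather than random is that the law of large numbers used in \refSS{SSvertex} gets replaced by an exact count; nothing downstream is affected.

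First I would recall the correspondence stated before \refT{Tboot}: since the graph is $d$-regular, $V(G)\setminus\iifl$ equals the $(d+1-\ell)$-core of the subgraph induced on $V(G)\setminus\iio$, so with $k\=d-\ell+1$ it suffices to locate the $k$-core of $\gndqx$ after \emph{deleting} the prescribed set $\iio$ of $m$ vertices. By \refR{Rfixed} the explosion method applies to the removal of any vertex set chosen by a rule independent of the edges; alternatively, for $\gndqx$ the vertex symmetry of the configuration model makes deleting a prescribed $m$-set and deleting a uniformly random $m$-set equivalent in distribution. Either way we are in the site-percolation setting of \refSS{SSvertex} with all $\pi_j=\pi=1-q$. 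Running the construction, explode each of the $m$ deleted vertices into $d$ red vertices of degree $1$, form $\gndxy$, and finish by deleting the $\nr$ red vertices, which by \eqref{crk=} leaves the $k$-core unchanged.

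The key simplification is that the exploded degree sequence $\ddy$ is now \emph{deterministic}: $\ny_d=n-m$, $\ny_1=\nr=dm$, $\ny_j=0$ otherwise, and $\ny=n-m+dm$. Hence, with no appeal to the law of large numbers, $\ny/n\to\zeta\=(1-q)+dq=1+(d-1)q>0$ and $\ny_j/\ny\to\py_j$ with $\py_d=\zeta\qw(1-q)$, $\py_1=\zeta\qw dq$ and $\py_j=0$ otherwise --- exactly the limits that \eqref{nyjlimv}, \eqref{nyjsum} and \eqref{gly} give for site percolation with $p_j=\gd_{jd}$ and all $\pi_j=1-q$, now holding identically (in particular a.s.) rather than merely in probability. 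Since all degrees in $\ddy$ are $\le d$, \refCond{C1}, the uniform summability of $\sum_j j\ny_j/\ny$, and the conclusion of \refR{Ropn} all hold trivially for $\ddy$, so \refP{PTK} applies to $\gndxy$ verbatim. The computation in the proof of \refT{TKv} --- which uses only \eqref{crk=}, \eqref{zetav}, \eqref{nyjlimv}, \eqref{gly}, \eqref{chu0} and \eqref{chu2} --- then carries over word for word, giving the conclusions of parts (i) and (ii) of \refT{TKv}; feeding in $k=d-\ell+1$, $\gl=d$, $\pi=1-q$ and using the computation of $h$, $h_1$ and $\pic$ given before \refT{Tboot} together with the unimodality/monotonicity of $\bgf$ established in its proof (which supplies the ``not a \lmp'' proviso) then reproduces exactly \refT{Tboot}, with the same $\qc=\qcl$ of \eqref{qc} and the same $\pmax(q)$.

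I do not expect a genuine obstacle: the proof is a transcription of that of \refT{TKv} with ``$\pto$'' upgraded to equality. The only points worth an explicit sentence are (a) the interchangeability of a prescribed and a uniformly random $m$-set for $\gndqx$, i.e.\ the vertex symmetry of the configuration model, already noted in \refR{Rfixed} and in the discussion before \refT{Tboot}; and (b) the degenerate values $q\in\set{0,1}$, which are handled by the same machinery --- for $q=1$ one moreover has $\iif\supseteq\iio$ with $|\iio|=m\sim n$, so $|\iif|=n-\opn$ directly. As \refR{Rfixed} remarks, the argument in fact covers any rule for choosing $\iio$ that does not look at the edge set, e.g.\ taking $\iio$ to be the first $m$ vertices.
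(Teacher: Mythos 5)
Your proposal is correct and follows essentially the same route as the paper: the paper's own justification is precisely that, by \refR{Rfixed} (or by the vertex symmetry of $\gndqx$), a prescribed $m$-set may be treated like a random one, and that the law-of-large-numbers step of \refSS{SSvertex} is replaced by the exact counts $\ny_d=n-m$, $\ny_1=\nr=dm$, after which the proof of \refT{TKv} (hence \refT{Tboot}) goes through unchanged. Your additional remarks on the deterministic degree sequence satisfying \refCond{C1} trivially and on the degenerate values of $q$ are fine and consistent with the paper's (briefer) argument.
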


\begin{remark}
As in \refR{Rjumps}, it is also possible to study the threshold in
greater detail by allowing $q$ to depend on $n$.
If we assume $\ell\le d-2$ (\ie, $k\ge3$) and $q=q(n)\to\qc$ defined
by \eqref{qc}, then \citet[Theorem 3.5]{SJ196} applies
and implies the following, also proved by
\citet{BalPitt} by different methods.
\begin{theorem}[\cite{BalPitt}]
Consider bootstrap percolation on $\gndq$ with $\ell\le d-2$, and
assume
that the set $\iio$ of initially infected vertices either is 
deterministic with $|\iio|=q(n)n$ or random, with each vertex
infected with probability $q(n)$.
  \begin{romenumerate}
\item
If $q(n)-\qc\gg n\qqw$, then \whp{} $|\iif|=n$, \ie, all vertices
become infected.	
\item
If $\qc-q(n)\gg n\qqw$, then \whp{} $|\iif|<n$ and,
moreover,
\begin{equation*}
|\iif|=h_1\bigpar{\pmax(q(n))}n+O_p\bigpar{n\qq|q(n)-\qc|\qqw}.  
\end{equation*}
  \end{romenumerate}
\end{theorem}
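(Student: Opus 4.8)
The plan is to deduce the statement from the fine‑threshold analysis of the \kcore{} of a random graph with a prescribed degree sequence, namely \citet[Theorem 3.5]{SJ196}, pushing the argument that produced \refT{Tboot} from \refP{PTK} one order of accuracy further. Recall from the start of \refS{Sbootstrap} that, with $k\=d-\ell+1$ — so $k\ge3$ precisely because $\ell\le d-2$ — the finally uninfected set is $V(G)\setminus\iifl=\crk\bigpar{V(G)\setminus\iio}$, hence $|\iif|=n-v\bigpar{\crk(G\setminus\iio)}$. For a deterministic $\iio$ with $|\iio|=m\=q(n)n$ (or, by the vertex‑symmetry of $\gndqx$, a uniformly random $m$‑set), applying the site‑percolation explosion recipe of \refT{TKv} to $\gndqx$ with $\pi\=1-q(n)$ shows that $\crk(G\setminus\iio)=\crk\bigpar{\gnxy{\ddyv}}$, where now the exploded degree sequence is itself \emph{deterministic}: $\ny_d=n-m$ vertices of degree $d$ and $\ny_1=\nr=dm$ vertices of degree $1$, so $\ny=n+(d-1)m$ is of order $n$. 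Note also that the \kcore{} threshold $\pic=\pick$ of \refT{TKv} equals $1-\qc$, so $\pi-\pic=\qc-q(n)$ exactly, and $\pi\gtrless\pic$ is the same as $q(n)\lessgtr\qc$.

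I would then apply \citet[Theorem 3.5]{SJ196} to $\gnxy{\ddyv}$. Its (somewhat stricter) hypotheses hold here with no work: the degrees take only the two bounded values $1$ and $d$, so all required moment and exponential‑moment conditions are automatic, and by the calculus recalled in the proof of \refT{Tboot} the function $\bgf(p)=\gf(p)/\gl=h(p)/(dp^2)$ is unimodal when $\ell<d-1$ and decreasing when $\ell=d-1$, so its only bad \crp{} $p_0$ is a genuine non‑degenerate global maximum. By \eqref{chu0}--\eqref{chu1} and \eqref{gly} the ``distance to criticality'' governing that theorem for $\gnxy{\ddyv}$ is, up to a fixed positive factor, $\sup_p\bigpar{\pi h(p)-\gl p^2}$, which is of the same order and sign as $\pi-\pic=\qc-q(n)$, and the $n\qqw$ window for $\ny$ coincides with the $n\qqw$ window for $n$. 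Thus, if $q(n)-\qc\gg n\qqw$ (the subcritical side, bounded away from threshold on the $n\qqw$ scale) \citet[Theorem 3.5]{SJ196} gives that $\crk\bigpar{\gnxy{\ddyv}}$ is \whp{} empty — here $k\ge3$ is used, the exponential‑moment hypothesis being trivial — so \whp{} $|\iif|=n$, which is~(i); while if $\qc-q(n)\gg n\qqw$ (the supercritical side) it gives that $\crk$ is \whp{} nonempty with $v\bigpar{\crk}=\pi\,h_1\bigpar{\pmax(q(n))}\,n+O_p\bigpar{n\qq|q(n)-\qc|\qqw}$, whence $|\iif|=n-v(\crk)$ yields the refined form of \refT{Tboot}(ii), the main term being the one computed there via the binomial identities for $h_1$.

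Two routine points remain. For a \emph{random} $\iio$, condition on $\iio$; by $d$‑regularity only $m\=|\iio|$ matters, reducing to the deterministic case with that $m$, and since $m\sim\Bi(n,q(n))$ we have $m/n=q(n)+O_p(n\qqw)$, so under $|q(n)-\qc|\gg n\qqw$ the quantity $m/n$ lies on the same side of $\qc$ and at the same order as $q(n)-\qc$. The only extra error incurred is from evaluating $\pmax(\cdot)$ and $h_1\bigpar{\pmax(\cdot)}$ at $m/n$ rather than $q(n)$; but near $\qc$ one has $\bgf(p)=\bgf(p_0)-\Theta\bigpar{(p-p_0)^2}$, so $\pmax(q)-p_0=\Theta\bigpar{(\qc-q)\qq}$ and $\pmax(\cdot)$ has derivative $O\bigpar{(\qc-q)\qqw}$ there, making this extra error $O_p\bigpar{n\cdot(\qc-q(n))\qqw\cdot n\qqw}=O_p\bigpar{n\qq|q(n)-\qc|\qqw}$, already inside the claimed bound. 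Finally, everything above is for the multigraph $\gndqx$, whereas the theorem concerns the simple graph $\gndq$; since $\sum_i d_i^2=nd^2=O(nd)$, \eqref{simple} gives $\liminf\P(\gndqx\text{ simple})>0$, and conditioning on simplicity transfers every event of probability tending to $0$, and hence — writing an $O_p$‑bound as such a family of events — the whole estimate, at the cost only of harmless constants. I expect the one genuine point of friction to be administrative rather than mathematical: lining up the precise criticality parameter and error term of \citet[Theorem 3.5]{SJ196} with $\qc-q(n)$, and checking that theorem's hypotheses for $\ddyv$ — both painless here because of the bounded degrees and the already‑established shape of $\bgf$.
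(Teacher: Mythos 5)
Your proposal is correct and is essentially the paper's own (largely unwritten) proof: the paper justifies this theorem exactly by the observation that after the explosion the degree sequence is the deterministic one $\ny_d=n-m$, $\ny_1=\nr=dm$, so that \citet[Theorem 3.5]{SJ196} applies to the exploded multigraph with criticality parameter proportional to $\qc-q(n)$, with the same reduction of a random $\iio$ to a deterministic one by conditioning on $|\iio|$ and the same transfer to the simple graph $\gndq$ via \eqref{simple}. The one discrepancy is not on your side: the main term your derivation produces, $\bigpar{1-(1-q(n))h_1(\pmax(q(n)))}n$, is the one consistent with \refT{Tboot}(ii), whereas the displayed main term $h_1\bigpar{\pmax(q(n))}n$ in the statement appears to be a slip in the paper (it is the uninfected fraction $(1-q(n))h_1(\pmax(q(n)))$ that should be subtracted from $1$), so your silent correction is appropriate rather than a gap.
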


\citet[Theorem 3.5]{SJ196} is stated for random multigraphs, and for
$\gndqx$ it yields further an asymptotic normal distribution in case
(ii), as well as a precise result for $\P(|\iif|=n)$ in the transition
window $q(n)-\qc=O(n\qqw)$.
The latter result can easily be transformed into the following
analogue of \cite[Theorem 1.4]{SJ196}; the asymptotic variance $\gss$
is given by explicit but rather complicated formulas in \cite{SJ196}.

\begin{theorem}\label{TM}
Assume $\ell\le d-2$.
Infect (from the outside) the vertices in $\gndqx$ 
one by one in random order, letting the infection spread as above to
every vertex having at least $\ell$ infected neighbours,
and let $M$ be the number of externally infected vertices required
to make $|\iif|=n$. Then $(M-n\qc)/n\qq\dto N(0,\gss)$, with $\gss>0$.
\end{theorem}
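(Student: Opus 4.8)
The plan is to reduce $M$ to the number of random vertex deletions needed to empty the $(d+1-\ell)$-core of $\gndqx$, via the bootstrap/$k$-core correspondence already used for \refT{Tboot}, and then to read the central limit theorem off \citet[Theorem 3.5]{SJ196}, applied through the explosion construction of \refS{Sdegrees} exactly as in \refR{Rjumps}.

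First I would set up the deletion reformulation. List the vertices of $\gndqx$ as $v_1,\dots,v_n$ in the uniformly random order in which they are externally infected and put $S_m=\{v_1,\dots,v_m\}$. Since bootstrap percolation is a monotone closure process, the final infected set obtained from seeds $v_1,\dots,v_m$ (run in any order, with the spread iterated to completion) depends only on the set $S_m$; and the correspondence recalled before \refT{Tboot}, applied once to the $d$-regular graph $\gndqx$ with initially infected set $S_m$, shows that the resulting uninfected set is
\[
U_m=\crk\bigpar{\gndqx\setminus S_m},\qquad k=d+1-\ell\ge3 .
\]
Thus $U_m$ is non-increasing in $m$, and
\[
M=\min\set{m:\crk\bigpar{\gndqx\setminus S_m}=\emptyset},
\qquad
\set{M\le m}=\set{\crk\bigpar{\gndqx\setminus S_m}=\emptyset};
\]
that is, $M$ is the number of uniformly random vertex deletions that destroys the $(d+1-\ell)$-core of $\gndqx$.

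Next I would place $\set{M\le m}$ in the scaling window. Deleting the fixed-size uniformly random set $S_m$ from a regular graph is, by exchangeability (\cf{} \refR{Rfixed}), the ``fixed number of vertices'' version of the site percolation $\gndqx\vpi$ with $\pi=1-m/n$, equivalently bootstrap percolation with $m$ vertices initially infected, i.e.\ $q=m/n$. Hence, taking $m=\floor{n\qc+x n\qq}$ so that $q=\qc+xn\qqw+O(1/n)$ (recall $\qc=1-\pic$), the event $\set{M\le m}$ is exactly $\set{|\iif|=n}$ for $\gndqx$ with $m$ initially infected vertices, which by \eqref{crk=} is the event that the $k$-core of the exploded graph $\gndxy$ is empty. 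Applying \citet[Theorem 3.5]{SJ196} to $\gndxy$ as in the proof of \refT{TKv} — this yields the precise transition-window statement for $\P(|\iif|=n)$ referred to just before the theorem, \cf{} \refR{Rjumps} — one obtains $\gss>0$ with
\[
\P\bigpar{M\le n\qc+x n\qq}\longrightarrow\Phi\bigpar{x/\sqrt{\gss}},\qquad x\in\bbR ,
\]
$\Phi$ being the standard normal distribution function; this is precisely the assertion $(M-n\qc)/n\qq\dto N(0,\gss)$.

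The bookkeeping in the first two steps is routine; the substantive step, and the main obstacle, is the third: checking that \citet[Theorem 3.5]{SJ196} applies with the stated normalisation. One must verify its regularity hypotheses on \dnn{} (trivial here, the graph being exactly $d$-regular for every $n$, which is also why the correct centering is $n\qc$ with no $O(n\qq)$ drift), that the asymptotic variance $\gss$ is strictly positive — which holds because for $\ell\le d-2$ the function $\bgf$ is unimodal with a nondegenerate maximum at $p_0$ (\cf{} the proof of \refT{Tboot}), so the $k$-core transition at $\pic$ is genuinely first order — and that the passage through the explosion construction is harmless (it only adjoins degree-$1$ vertices, which do not affect \crk). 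Each of these points follows the pattern of \refS{Score}, but the distributional content must be imported essentially verbatim from \cite{SJ196}.
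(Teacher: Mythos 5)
Your proposal follows essentially the same route the paper indicates: recast $M$ as the hitting time at which random vertex deletions empty the $(d+1-\ell)$-core (equivalently, $\{M\le m\}=\{|\iif|=n\}$ with $m$ seeds), pass through the explosion construction with the deterministic degree counts $\ny_d=n-m$, $\ny_1=dm$, and import the transition-window statement for $\P(|\iif|=n)$ from \citet[Theorem 3.5]{SJ196} to get $\P\bigpar{M\le n\qc+xn\qq}\to\Phi\bigpar{x/\sqrt{\gss}}$, exactly the "analogue of Theorem 1.4 of \cite{SJ196}" the paper appeals to. This matches the paper's (sketched) argument, with the monotonicity and bookkeeping steps made explicit.
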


Presumably, the same results hold for $\gndq$, but technical
difficulties have so far prevented a proof, cf.\ \cite{SJ196}. In
any case, it follows from \refT{TM} that the size of
the transition window is $O(n\qqw)$ for $\gndq$ too, and not smaller.
\end{remark}

\appendix

\section{The \kcore{} and branching processes}\label{App}

We give a precise statement of the relation between \refP{PTK} and
branching processes. This can be seen heuristically from the
branching process approximation of the local exploration process,
but as said above, we do not attempt to make this approximation
rigorous; instead we compare the quantities in \refP{PTK} with
branching process probabilities.

\begin{theorem}
Let $\cX$ be a Galton--Watson branching process with offspring
distribution $\D$ and starting with one individual $o$, and let $\cXx$ be
the modified branching process where the root $o$ has offspring
distribution $D$ but everyone else has offspring distribution $\D$.
We regard these branching processes as (possibly infinite) trees with
root $o$. Further, let $\tk$ be the infinite rooted tree where each
node has $k-1$ children, and let $\tkx$ be the infinite rooted
$k$-regular tree where the root has $k$ children and everyone else
$k-1$.

Then $\pmax=\P(\cX\supseteq\tk)$, the probability
that $\cX$ contains a rooted copy of $\tk$ (\ie, a copy of $\tk$ with
root $o$)
and 
$h_1(\pmax)=\P(\cXx\supseteq\tkx)$, the probability
that $\cXx$ contains a rooted copy of $\tkx$.
\end{theorem}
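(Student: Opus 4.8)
\emph{Proof plan.} The plan is to identify $\pmax$ as the largest fixed point of a map describing a recursive embedding of $\tk$ into $\cX$, and then to read off $h_1(\pmax)$ by conditioning on the offspring of the root of $\cXx$. Throughout write ``$\cX\supseteq\tk$'' for ``$\cX$ contains a rooted copy of $\tk$'', and likewise for $\cXx$ and $\tkx$.

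First I would derive a fixed-point equation. Conditioning on the offspring of the root $o$ of $\cX$ (distributed as $\D$), the subtrees of $\cX$ hanging from its children are i.i.d.\ copies of $\cX$, and $\cX\supseteq\tk$ precisely when at least $k-1$ of these subtrees satisfy ``$\supseteq\tk$''. Hence $\beta\=\P(\cX\supseteq\tk)$ solves $\beta=F(\beta)$, where
\begin{equation*}
F(p)\=\sum_{j\ge0}\P(\D=j)\,\P\bigpar{\Bi(j,p)\ge k-1}.
\end{equation*}
Using the size-biased description $\P(\D=j)=(j+1)p_{j+1}/\gl$ from \refR{Rbp} together with the identity $l\,\P(\Bi(l-1,p)\ge k-1)=p\qw\E\bigpar{\Bi(l,p)\ett{\Bi(l,p)\ge k}}$ (the same computation as in \refS{Sbootstrap}), one checks that $F(p)=h(p)/(\gl p)$ for $p\in\ooi$, with $h$ as in \eqref{h}; also $F(0)=0$ since $k\ge2$. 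Thus the fixed points of $F$ in $\oi$ are $0$ together with the solutions in $\ooi$ of $\gl p^2=h(p)$, whose maximum (using $h(0)=0$) is $\pmax$. To decide which fixed point equals $\beta$, I would run the standard monotone iteration: letting $E_n$ be the event that $\cX$ contains a rooted copy of $\tk$ truncated to depth $n$ and $\beta_n\=\P(E_n)$, the same conditioning gives $\beta_0=1$ and $\beta_{n+1}=F(\beta_n)$, and since $F$ is non-decreasing with $F(1)\le1$, the $\beta_n$ decrease to a fixed point $\beta_\infty$ of $F$ (by continuity of $F$ on $\oi$) that dominates every fixed point $\gamma\in\oi$, because $\gamma=F^n(\gamma)\le F^n(1)=\beta_n$. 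Hence $\beta_\infty=\pmax$.

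It then remains to see that $\P(\cX\supseteq\tk)=\beta_\infty$. The inclusion $\{\cX\supseteq\tk\}\subseteq\bigcap_nE_n$ is clear, and the reverse inclusion (a.s.) is the one step I expect to require genuine care: since $\cX$ is a.s.\ locally finite (each of its countably many vertices has $\D<\infty$ children a.s.), on $\bigcap_nE_n$ the rooted embeddings of the finite truncations of $\tk$ into $\cX$, partially ordered by restriction, form an infinite locally finite tree, so König's lemma produces a rooted embedding of all of $\tk$. Thus $\{\cX\supseteq\tk\}=\bigcap_nE_n$ a.s.\ and $\P(\cX\supseteq\tk)=\lim_n\beta_n=\pmax$, which is the first claim. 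For the second, I would condition on the offspring of the root of $\cXx$, which is distributed as $D$: the subtrees hanging from its children are again i.i.d.\ copies of $\cX$ (every non-root vertex of $\cXx$ has offspring $\D$), and $\cXx\supseteq\tkx$ exactly when at least $k$ of them satisfy ``$\supseteq\tk$'', each independently with probability $\pmax$ by the first part; hence
\begin{equation*}
\P\bigpar{\cXx\supseteq\tkx}=\sum_{d\ge0}\P(D=d)\,\P\bigpar{\Bi(d,\pmax)\ge k}=\P\bigpar{\Dpmax\ge k}=h_1(\pmax)
\end{equation*}
by \eqref{h1}. The binomial identity for $F$ and the largest-fixed-point argument are routine; the compactness/König step is the only delicate point.
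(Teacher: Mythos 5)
Your proposal is correct and follows essentially the same route as the paper's proof: the depth-$n$ truncation recursion $\beta_{n+1}=h(\beta_n)/(\gl\beta_n)$ started from $\beta_0=1$, monotone convergence to the largest root of $h(p)=\gl p^2$, a compactness argument identifying $\bigcap_n E_n$ with $\{\cX\supseteq\tk\}$, and conditioning on the root's offspring $D$ to get $h_1(\pmax)$ for $\cXx$. The only difference is that you spell out the compactness step via K\"onig's lemma, which the paper leaves as a remark.
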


Hence, by \refP{PTK},
the probability that a random vertex belongs to the \kcore,
which is $\E(v(\crkx)/n)$, converges to the probability
$\P(\cXx\supseteq\tkx)$, the probability that the branching process 
approximating the local structure at a random vertex contains the
infinite $k$-regular tree $\tkx$.
Similarly, the probability that a random
edge belongs to the $\crkx$, which is
$\sim\E(e(\crkx)/(n\gl/2))$, converges to
$h(\pmax)/\gl=\pmax^ 2=\P(\cX\supseteq\tk)^ 2$, which can be
interpreted as the probability that both endpoints of a random edge
grow infinite $k$-regular trees in the branching process approximation.

\begin{proof}
Let $\tkn$ be the subtree of $\tk$ consisting of
all nodes of height $\le n$, \ie, the rooted tree of height $n$ where
each node of height $<n$ has $k-1$ children, and let $q_n$ be the
probability that $\cX$ contains a copy of $\tkn$. Thus, $q_0=1$, and
for $n\ge0$, $q_{n+1}$ is the probability that the root $o$ has at
least $k-1$ children that each is the root of a copy of $\tkn$ in the
corresponding subtree of $\cX$; let us call such children \emph{good}.
By the branching property, the subtrees rooted at the children of $o$
are independent copies of $\cX$: thus the probability that a given
child is good is $q_n$, and the number of good children of $o$ has
the tinned distribution $\D_{q_n}$. Hence,
\begin{equation*}
  \begin{split}
	q_{n+1}
&=\P(\D_{q_n}\ge k-1)
=\sum_{d=k}^ \infty \P(\D=d-1) \sum_{l=k}^ \infty\P\bigpar{\Bi(d-1,q_n)=l-1}
\\&
=\sum_{d\ge k} \sum_{l\ge k} \frac{d}{\gl}\P(D=d)\frac{l}{dq_n}
  \P\bigpar{\Bi(d,q_n)=l}
\\&
=\frac1{\gl q_n} \sum_{l\ge k}l\P(D_{q_n}=l)
=\frac1{\gl q_n} h(q_n).
  \end{split}
\end{equation*}
Since $x\mapsto h(x)/(\gl x)$ is increasing (\eg{} by the same
calculation)
and $1=q_0\ge q_1\ge
\dots$, it follows that $q_n$ decreases to the largest root $\pmax$ of 
$\frac1{\gl q} h(q)=q$ in $\oi$.
On the other hand, the events $\cE_n\=\set{\cX\supseteq \tkn}$ are
decreasing, $\cE_1\supseteq\cE_2\supseteq\dotsm$, and $\bigcap_n\cE_n$ is, by a
compactness argument, equal to the event \set{\cX\supseteq\tk}. Hence, 
$\P(\cX\supseteq\tk)=\lim_n q_n=\pmax$.

Similarly, $\cXx$ contains a rooted copy of $\tkx$ if and only if the
root $o$ has at least $k$ good (now with $n=\infty$)
children. We have shown that each child
is good with probability $\pmax$, and thus the number of good children
has the thinned distribution $D_{\pmax}$; hence
$\P(\cXx\supseteq\tkx)=\P(D_{\pmax}\ge k)=h_1(\pmax)$.
\end{proof}

\begin{remark}
  When $k=2$, $\cT_2$ is just an infinite path, and thus
  $\pmax=\P(\cX\supseteq\cT_2)$ is just the survival probability $\rho$ of the
  branching process $\cX$, as observed algebraically in \refR{Rk=2}.
Hence the thresholds for 2-core and giant
  component coincide, for any of our percolation models.
Moreover, we see that if $v$ is a random vertex, the events
``$v$ is in a giant component'' and ``$v$ is in the  2-core'' are
  approximated by ``the root $o$ in $\cXx$ has at least one child with
  infinite progeny''
and ``the root $o$ in $\cXx$ has at least two children with
  infinite progeny'', respectively, which again shows the close
  connection between these properties.
\end{remark}

\begin{ack}
Part of this research was done during visits to
to the University of Cambridge and Trinity College in 2007,
the 11th Brazilian School of Probability in Maresias, August 2007, and
to the Isaac Newton Institute in Cambridge and Churchill College in
2008.
I thank 
Louigi Addario-Berry,
Malwina Luczak,
Rob Morris
and James Norris
for stimulating discussions and helpful comments.
\end{ack}

\newcommand\AAP{\emph{Adv. Appl. Probab.} }
\newcommand\JAP{\emph{J. Appl. Probab.} }
\newcommand\JAMS{\emph{J. \AMS} }
\newcommand\MAMS{\emph{Memoirs \AMS} }
\newcommand\PAMS{\emph{Proc. \AMS} }
\newcommand\TAMS{\emph{Trans. \AMS} }
\newcommand\AnnMS{\emph{Ann. Math. Statist.} }
\newcommand\AnnPr{\emph{Ann. Probab.} }
\newcommand\CPC{\emph{Combin. Probab. Comput.} }
\newcommand\JMAA{\emph{J. Math. Anal. Appl.} }
\newcommand\RSA{\emph{Random Struct. Alg.} }
\newcommand\ZW{\emph{Z. Wahrsch. Verw. Gebiete} }
\newcommand\DMTCS{\jour{Discr. Math. Theor. Comput. Sci.} }

\newcommand\AMS{Amer. Math. Soc.}
\newcommand\Springer{Springer-Verlag}
\newcommand\Wiley{Wiley}

\newcommand\vol{\textbf}
\newcommand\jour{\emph}
\newcommand\book{\emph}
\newcommand\inbook{\emph}
\def\no#1#2,{\unskip#2, no. #1,} 
\newcommand\toappear{\unskip, to appear}

\newcommand\webcite[1]{
   \penalty0
\texttt{\def~{{\tiny$\sim$}}#1}\hfill\hfill}
\newcommand\webcitesvante{\webcite{http://www.math.uu.se/~svante/papers/}}
\newcommand\arxiv[1]{\webcite{arXiv:#1.}}

\def\nobibitem#1\par{}

\end{document}